\newcommand{\KK}{\mathbb{K}}
\newcommand{\cS}{\mathcal{S}}
\newcommand{\cL}{\mathcal{L}}
\newcommand{\cM}{\mathcal{M}}
\newcommand{\sh}{\mathrm{sh}}
\newcommand{\diam}{\mathrm{diam}}
\newcommand{\PgL}{\mathrm{P}\Gamma{\mathrm{L}}}
\newcommand{\cG}{\mathcal{G}}
\newcommand{\spin}{\text{\itshape spin}}
\newcommand{\PG}{\mathrm{PG}}
\newcommand{\GL}{\mathrm{GL}}
\newcommand{\Res}{\mathrm{Res}}
\newcommand{\Aut}{\mathrm{Aut}}
\newcommand{\aut}{\mathrm{Stab}}
\newcommand{\ch}{\mathrm{char}}
\newcommand{\rank}{\mathrm{rank}\,}
\newcommand{\cP}{\mathcal{P}}
\newcommand{\GF}{\mathrm{GF}}
\theoremstyle{plain}
\newtheorem{lemma}{Lemma}[section]
\newtheorem{theorem}[lemma]{Theorem}
\newtheorem{thm}{Theorem}
\newtheorem{co}[thm]{Corollary}
\newtheorem{corollary}[lemma]{Corollary}
\newtheorem{prop}[lemma]{Proposition}
\theoremstyle{definition}
\newtheorem{remark}[lemma]{Remark}
\begin{document}
\begin{frontmatter}
\title{On transparent embeddings of point-line geometries}
\author[SI]{Ilaria Cardinali}
\ead{ilaria.cardinali@unisi.it}
\address[SI]{Department of Information Engineering and Mathematics, University of Siena,
Via Roma 56, I-53100, Siena, Italy}
\author[LG]{Luca Giuzzi\corref{cor1}}
\ead{luca.giuzzi@unibs.it}
\address[LG]{DICATAM - Section of Mathematics,
University of Brescia,
Via Branze 53, I-25123, Brescia, Italy}
\cortext[cor1]{Corresponding author. Tel. +39 030 3715739; Fax. +39 030 3615745}
\author[SI]{Antonio Pasini}
\ead{antonio.pasini@unisi.it}
\begin{abstract}
  We introduce the class of transparent embeddings for a point-line geometry $\Gamma = (\cP,\cL)$ as the class of full projective embeddings $\varepsilon$ of $\Gamma$ such that the preimage of any projective line fully contained in $\varepsilon(\cP)$ is a line of $\Gamma$. We will then investigate  the transparency of Pl\"ucker embeddings of projective and polar grassmannians and spin embeddings of half-spin geometries and dual polar spaces of orthogonal type. As an application of our results on transparency, we will derive several Chow-like theorems for polar grassmannians and half-spin geometries.
\end{abstract}
\begin{keyword}
Pl\"ucker Embeddings \sep Spin Embeddings \sep Polar grassmannians \sep Automorphisms.

\MSC[2010]  51A50 \sep 51E22 \sep 51A45
\end{keyword}
\end{frontmatter}

\section{Introduction}\label{Introduction}
\label{Sec1}
Let $\Gamma =(\cP,\cL)$ be a point-line geometry. As usual, we assume that $\Gamma$ is connected and that
no two distinct lines of $\Gamma$ have more than one point in common. A (full) projective embedding of $\Gamma$ into the projective space $\Sigma = \mathrm{PG}(V)$ of a vector space $V$ is an injective map $\varepsilon$  from the
point-set $\cP$ of $\Gamma$ to the point-set of $\Sigma$ satisfying the following two properties:
(E1) the image of $\varepsilon$ spans $\Sigma$;
(E2) every line of $\Gamma$ is mapped by $\varepsilon$
onto a line of $\Sigma$.

The $\varepsilon$-\emph{image} of $\Gamma$ is the point-line subgeometry $\Gamma^\varepsilon:= ({\cP}^\varepsilon, {\cL}^\varepsilon)$ of $\Sigma$, with ${\cP}^\varepsilon:=\{ \varepsilon(p) : p\in\cP \}$ as the point-set and ${\cL}^\varepsilon:=\{ \varepsilon(\ell) : \ell\in\cL \}$ as the set of lines. The point-set ${\cP}^\varepsilon$ will be called the \emph{support} of $\varepsilon$ and denoted by the symbol $|\varepsilon|$ instead of ${\cP}^\varepsilon$, for short.

We shall denote by $\aut(|\varepsilon|)$ and $\Aut(\varepsilon)$ the set-wise stabilizer of $|\varepsilon|$ in the collineation group $\Aut(\Sigma) = \PgL(V)$ of $\Sigma$ and the set-wise stabilizer of ${\cL}^\varepsilon$ in $\aut(|\varepsilon|)$, respectively.

In view of (E1), (E2) and the connectedness of $\Gamma$, both groups $\aut(|\varepsilon|)$ and $\Aut(\varepsilon)$ act faithfully on $|\varepsilon|$. Hence $\Aut(\varepsilon)$ is isomorphic to a subgroup of the automorphism group $\Aut(\Gamma)$ of $\Gamma$, while the same is false for $\aut(|\varepsilon|)$ in general. Indeed, in general $\aut(|\varepsilon|)$ does not stabilize ${\cL}^\varepsilon$; hence it properly contains $\Aut(\varepsilon)$.

More explicitly, we say that an automorphism $g\in \Aut(\Gamma)$ of $\Gamma$ \emph{lifts to} $\Sigma$ \emph{through} $\varepsilon$ (also, $g$ \emph{lifts to} $\Gamma^\varepsilon$, for short) if there exists a collineation $g^\varepsilon \in \Aut(\varepsilon)$ such that $g^\varepsilon\varepsilon = \varepsilon g$. As $\Aut(\varepsilon)$ acts faithfully on $|\varepsilon|$, the collineation $g^\varepsilon$, if it exists, is uniquely determined by $g$. We call it the \emph{lifting} of $g$ to $\Gamma^\varepsilon$. The automorphisms of $\Gamma$ that lift to $\Gamma^\varepsilon$ form a subgroup $\Aut(\Gamma)_\varepsilon$ of $\Aut(\Gamma)$. Clearly, $\Aut(\varepsilon) \cong \Aut(\Gamma)_\varepsilon$. We say that $\varepsilon$ is \emph{homogeneous} if $\Aut(\Gamma)_\varepsilon = \Aut(\Gamma)$.

In general, ${\cL}^\varepsilon$ is a proper subset of the set of projective lines of $\Sigma$ contained in $|\varepsilon|$. For instance, consider the natural embedding of the symplectic quadrangle $W(3,\KK)$ in a $3$-dimensional projective space $\PG(3,\KK)$. The support of this embedding is the whole point-set of $\PG(3,\KK)$, whence all lines of $\PG(3,\KK)$ are contained in it, but not all of them are lines of $W(3,\KK)$.

If all lines of $\Sigma$ contained in $|\varepsilon|$ belong to ${\cL}^\varepsilon$, then we say that the embedding $\varepsilon$ is \emph{transparent}. Clearly, if $\varepsilon$ is transparent then $\aut(|\varepsilon|) = \Aut(\varepsilon)$, hence $\aut(|\varepsilon|) \cong \Aut(\Gamma)_\varepsilon \leq \Aut(\Gamma)$.

In this paper, we shall mainly focus on transparency. However, for a better understanding of this property, it is convenient to place it in a wider context.
In view of that, we need to state a few preliminary conventions.

For any $p,q\in\cP$, let $d(p,q)$ be the distance between $p$ and $q$ in the collinearity graph of $\Gamma$. Let $\diam(\Gamma)$ be the
diameter of this graph. Note that $\diam(\Gamma)$ might be infinite. This would not change so much of the substance of what we are going to say, but it could cause some complications in our exposition. Thus, in order to make things easier, from now on we assume that $\diam(\Gamma) < \infty$. Under this assumption, we introduce two non-negative integers $\chi^\uparrow_\varepsilon$ and $\chi^\downarrow_\varepsilon$, defined as follows:
\begin{equation}\label{hk}
\left.\begin{array}{rcl}
        \chi^\uparrow_\varepsilon + 1 & := &  \max\{1\leq h \leq \diam(\Gamma) ~:~  \langle\varepsilon(x),\varepsilon(y)\rangle\subseteq |\varepsilon|, ~ \forall x,y\in\cP \text{ with } d(x,y) \leq h\}, \\
 & & \\
\chi^\downarrow_\varepsilon + 1 & := & \min\{ 1\leq  h \leq \diam(\Gamma) ~:~ \langle\varepsilon(x),\varepsilon(y)\rangle\not\subseteq
  |\varepsilon|, ~ \forall x,y\in\cP \text{ with }
  d(x,y)>h\}.
\end{array}\right\}
\end{equation}
Needless to say, in the previous definitions the symbol $\langle \varepsilon(x),\varepsilon(y)\rangle$ stands for the line of $\Sigma$ spanned by $\varepsilon(x)$ and $\varepsilon(y)$. Note also that, since the clause $d(x,y) > \diam(\Gamma)$ is empty, the set
\[\{ 1\leq  h \leq \diam(\Gamma) ~:~ \langle\varepsilon(x),\varepsilon(y)\rangle\not\subseteq
  |\varepsilon|, ~ \forall x,y\in\cP \text{ with } d(x,y)>h\}\]
always contains $\diam(\Gamma)$. Hence the number $\chi^\downarrow_\varepsilon$ is well defined even if $\Gamma$ contains pairs of points $x, y$ at maximal distance such that $\langle\varepsilon(x),\varepsilon(y)\rangle\subseteq|\varepsilon|$. If that is the case, then $\chi^\downarrow_\varepsilon = \diam(\Gamma)-1$.

Clearly, $0\leq \chi^\uparrow_\varepsilon \leq \chi^\downarrow_\varepsilon \leq \diam(\Gamma)-1$. We call $\chi^\uparrow_\varepsilon$ and $\chi^\downarrow_\varepsilon$ respectively the \emph{lower} and \emph{upper} \emph{degree of opacity} of $\varepsilon$ and we say
  that the embedding $\varepsilon$ is $(\chi^\uparrow_\varepsilon, \chi^\downarrow_\varepsilon)$-\emph{opaque}. When $\chi^\uparrow_\varepsilon=\chi^\downarrow_\varepsilon$ we put $\chi_\varepsilon := \chi^\uparrow_\varepsilon = \chi^\downarrow_\varepsilon$ and we call $\chi_\varepsilon$ the \emph{tight degree of opacity} of $\varepsilon$, also saying that $\varepsilon$ is \emph{tightly} $\chi_\varepsilon$-\emph{opaque}.

It is clear that a projective embedding is transparent if and only if it is tightly $0$-opaque. On the other hand, it follows from condition (E1) of the definition of projective embedding that an embedding $\varepsilon$ is tightly $(\diam(\Gamma)-1)$-opaque if and only if $|\varepsilon|$ is the full point-set of $\Sigma$. If this is the case and also $\diam(\Gamma) > 1$, then we say that $\varepsilon$ is \emph{completely opaque}. Note that if $\diam(\Gamma) = 1$ then $0 = \chi^\uparrow_\varepsilon = \chi^\downarrow_\varepsilon = \diam(\Gamma) -1$. In this case $\Gamma$ is a projective space and $\varepsilon$ is an isomorphism.

The following two theorems, to be proved in Sections~\ref{Sec4p} and \ref{Sec5p} respectively, are the main results of this paper. They deal with the transparency or opacity of Pl\"ucker embeddings of projective and polar grassmannians and spin embeddings of half-spin geometries and dual polar spaces of orthogonal type. We refer to Sections~\ref{sec3.1} and \ref{sec3.2} for more details on the geometries and the embeddings considered in these two theorems.

\begin{thm}\label{main thm 1}
Let $\cG_k$ be the $k$-grassmannian of a finite dimensional projective geometry $\cG$ defined over a field (namely a commutative division ring) and let $\cS_k$ be the polar $k$-grassmannian of a polar space $\cS$ of rank $n$ associated to an alternating, quadratic or hermitian form of a finite dimensional vector space defined over a field. Then the following hold:
\begin{enumerate}[1.]
\item\label{main pt1} The Pl\"ucker embedding of $\cG_k$ is transparent for any $k$.
\item\label{main pt2} If $k < n$ then the Pl\"ucker embedding of $\cS_k$ is transparent, except when $\cS$ is of symplectic type (namely, it arises from an alternating form).
\item\label{main pt3} Let $\cS$ be of symplectic type. Then the Pl\"ucker embedding of $\cS_1$ is completely opaque. On the other hand, if  $1 < k < n$ then the Pl\"ucker embedding of $\cS_k$ is $(0,1)$-opaque. Finally, the Pl\"ucker embedding of $\cS_n$ is transparent.
\item\label{main2 pt1} If $\cS$ arises from a hermitian form in a vector space of dimension $2n$, then the Pl\"ucker embedding of $\cS_n$ is transparent.
\end{enumerate}
\end{thm}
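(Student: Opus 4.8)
The common engine for all four parts is the classical description of the projective lines lying entirely on a Grassmann variety, which I would isolate as a preliminary lemma: \emph{if $\omega,\omega'\in\wedge^{k}V$ are nonzero and decomposable, with underlying $k$-subspaces $U\neq U'$, and $\omega+\omega'$ is again decomposable, then $\dim(U\cap U')=k-1$.} Granting this, part~\ref{main pt1} is quick: let $\ell$ be a projective line of $\PG(\wedge^kV)$ all of whose points lie on the Grassmann variety, i.e.\ $\ell\subseteq|\varepsilon|$ for the Pl\"ucker embedding $\varepsilon$ of $\cG_k$. Represent two of its points by decomposable $\omega,\omega'$ with underlying $k$-subspaces $U\neq U'$; after rescaling $\omega'$ the point $[\omega+\omega']$ lies on $\ell$, hence on the variety, so $\omega+\omega'$ is decomposable and the lemma gives $\dim(U\cap U')=k-1$. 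The pencil of all $k$-subspaces $X$ with $U\cap U'\subset X\subset U+U'$ then has Pl\"ucker image a projective line through $[\omega]$ and $[\omega']$, hence equal to $\ell$; so $\ell$ is a line of $\cG_k$ and $\varepsilon$ is transparent.

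For the polar grassmannians with $k<n$ (part~\ref{main pt2} and the last assertion of part~\ref{main pt3}) I would use part~\ref{main pt1} as a reduction, viewing the Pl\"ucker embedding $\varepsilon$ of $\cS_k$ as the restriction to $\cS_k$ of the Pl\"ucker embedding of $\cG_k$. A projective line contained in $|\varepsilon|$ is then contained in the Grassmann variety, so by part~\ref{main pt1} it is the Pl\"ucker image of a pencil $\{X : A\subset X\subset B\}$, with $\dim A=k-1$ and $\dim B=k+1$, all of whose members are totally isotropic; it remains to decide whether $B$ itself must be totally isotropic, in which case the pencil is a line of $\cS_k$. Writing $X_1=A+\langle u\rangle$ and $X_2=A+\langle w\rangle$, totality of $X_1,X_2$ forces $A$ totally isotropic and $u,w\in A^\perp$, and the only obstruction left to $B=A+\langle u,w\rangle$ being totally isotropic is the vanishing of the single ``off-diagonal'' value of the form (or of its associated bilinear form) on $u,w$. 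Running over all members $A+\langle u+tw\rangle$ of the pencil and polarising forces this value to vanish when the form is hermitian or quadratic---a short field computation that stays valid over $\F_2$ and $\F_4$, since the pencil then still has respectively $3$ and $5$ members---so those embeddings are transparent. For an alternating form, instead, $\varphi(u,w)$ is not constrained at all, so choosing $u,w\in A^\perp$ with $\varphi(u,w)\neq0$ yields a pencil inside $|\varepsilon|$ whose top $B$ is \emph{not} totally isotropic: a projective line in the support that is not a line of $\cS_k$, so transparency fails precisely in the symplectic case.

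It then remains to compute the opacity invariants in the symplectic case (part~\ref{main pt3}). For $\cS_1$ the Pl\"ucker embedding is the natural embedding into $\PG(V)=\PG(\wedge^1V)$, whose support is the whole point set of $\PG(V)$ because an alternating form makes every vector isotropic; hence $\varepsilon$ is tightly $(\diam(\cS_1)-1)$-opaque, i.e.\ completely opaque (here $\diam(\cS_1)=2$ when the rank is $\geq2$). For $1<k<n$ one shows the embedding is $(0,1)$-opaque by three observations: (i) $\chi^\downarrow_\varepsilon\leq1$, because if $\langle\varepsilon(X),\varepsilon(Y)\rangle\subseteq|\varepsilon|$ then, by part~\ref{main pt1}, $\dim(X\cap Y)=k-1$, and two totally isotropic $k$-subspaces meeting in codimension $1$ are at distance $\leq2$ in $\cS_k$ (a common neighbour is found inside $(X+Y)^\perp$, using $k<n$); (ii) $\chi^\downarrow_\varepsilon\geq1$, witnessed by the non-totally-isotropic-top pencils above, which join two points of $\cS_k$ at distance exactly $2$ by a line lying in $|\varepsilon|$; (iii) $\chi^\uparrow_\varepsilon=0$, because there are also totally isotropic $k$-subspaces $X,Y$ at distance $2$ with $\dim(X\cap Y)=k-2$, for which $\langle\varepsilon(X),\varepsilon(Y)\rangle$ is not even contained in the Grassmann variety (again by part~\ref{main pt1}).

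Finally, the dual polar spaces: the last assertion of part~\ref{main pt3} (symplectic) and part~\ref{main2 pt1} (hermitian with $\dim V=2n$). Here the reduction via part~\ref{main pt1} is sharper: a projective line in $|\varepsilon|$ is the Pl\"ucker image of a pencil $\{X : A\subset X\subset B\}$ of totally isotropic $n$-subspaces, so $A$ is totally isotropic of dimension $n-1$ and $B\subseteq A^\perp$; since $\dim A^\perp=\dim V-(n-1)=n+1=\dim B$ when $\dim V=2n$, we get $B=A^\perp$ and the pencil is exactly the set of maximal totally isotropic subspaces through $A$, i.e.\ a line of $\cS_n$. For an alternating form this is immediate, since every $n$-subspace between $A$ and $A^\perp$ is automatically totally isotropic, so transparency follows. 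For a hermitian form the same skeleton applies, but it must be carried out inside the ad hoc ambient over the fixed field $\F_q$ in which the Grassmann embedding of $\cS_n$ is a genuine full embedding (its lines carry only $q+1$ points), as set up in Section~\ref{sec3.2}. I expect this last step---identifying the right $\F_q$-ambient for the hermitian dual polar space and descending to it from the $\F_{q^2}$-picture, where part~\ref{main pt1} literally applies---to be the main technical obstacle, alongside the care needed to keep the polarisation argument of part~\ref{main pt2} valid over the smallest fields.
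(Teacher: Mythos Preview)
Your proposal is correct, and for Parts~\ref{main pt2}--\ref{main2 pt1} it follows essentially the same route as the paper: reduce everything to Part~\ref{main pt1} by observing that a line in $|\varepsilon_k^\cS|$ is in particular a line in the ambient Grassmann variety, hence comes from a pencil $\{X : A\subset X\subset B\}$, and then analyse whether $B$ is totally isotropic. The paper phrases this last step as passing to the quotient polar space on $A^\perp/A$ and invoking the $k=1$ case there, while you polarise directly; these are the same computation. For the symplectic $(0,1)$-opacity the paper's argument matches your (i)--(iii). For the hermitian dual polar space the paper uses a slightly slicker device than your proposed descent to the $\KK_\sigma$-ambient: since the Grassmann variety is an intersection of quadrics, a Baer subline contained in $|\varepsilon_n^\cS|\subseteq|\varepsilon_n^\cG|$ already gives three collinear points on the variety, forcing the full $\KK$-line into $|\varepsilon_n^\cG|$, and then Part~\ref{main pt1} applies verbatim.

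The genuine divergence is in Part~\ref{main pt1} itself. You invoke the classical decomposability lemma (if $\omega,\omega',\omega+\omega'$ are all decomposable then the underlying subspaces meet in codimension one), which is exactly Chow's original argument. The paper deliberately avoids this and instead deduces Part~\ref{main pt1} from its general Lemma~\ref{l1}: it checks that the Grassmann variety, being an intersection of quadrics, has no punctured-plane sections, that $\Aut(\cG_k)_\varepsilon$ is distance-transitive, and that the point-line axiom (A\ref{A1}) holds in any building grassmannian; the conclusion $\chi^\uparrow_\varepsilon=\chi^\downarrow_\varepsilon$ then follows, and a single check at distance~$2$ (the convex hull is a Klein quadric $\mathrm{Q}^+(5,\KK)$ naturally embedded) pins the common value to~$0$. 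Your route is shorter and entirely elementary; the paper's route is longer but yields a reusable tool (Lemma~\ref{l1}) that it then recycles for the spin embeddings in Theorem~\ref{main thm 2}, where no analogue of the decomposability lemma is available.
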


\begin{thm}\label{main thm 2}
The following hold:
\begin{enumerate}[1.]
\item\label{main2 pt3} The spin embedding of a half-spin geometry is transparent.
\item\label{main2 pt2} The spin embedding of the dual polar space associated to the orthogonal group $\mathrm{O}(2n+1,\KK)$ is tightly $1$-opaque.
\item\label{main2 pt4} The spin embedding of the dual polar space associated to the orthogonal group $\mathrm{O}^-(2n+2,\KK)$ is transparent.
 \end{enumerate}
\end{thm}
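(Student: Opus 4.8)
\noindent\emph{Proof plan.} Throughout, $\varepsilon$ denotes the embedding under consideration. By the discussion in Section~\ref{Sec1}, and since $\varepsilon$ maps each line of the geometry onto an entire line of the ambient projective space, transparency of $\varepsilon$ is equivalent to the condition that $\langle\varepsilon(x),\varepsilon(y)\rangle\not\subseteq|\varepsilon|$ for every pair of points $x,y$ with $d(x,y)\geq 2$; similarly, $\varepsilon$ is tightly $1$-opaque precisely when $\langle\varepsilon(x),\varepsilon(y)\rangle\subseteq|\varepsilon|$ for all $x,y$ with $d(x,y)\leq 2$ while $\langle\varepsilon(x),\varepsilon(y)\rangle\not\subseteq|\varepsilon|$ for all $x,y$ with $d(x,y)\geq 3$, the last clause being void when $\diam(\Gamma)=2$. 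The plan is to verify these pointwise conditions. The common tool is the hereditary behaviour of spin embeddings recorded in Section~\ref{sec3.2}: the restriction of a spin embedding to a residue, or to a convex subspace $S$, is again a spin embedding of the sub-geometry carried by $S$, and $|\varepsilon|\cap\langle\varepsilon(S)\rangle=\varepsilon(S)$; this lets us move the question freely between a geometry and its residues and convex subspaces.

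\emph{Part~\ref{main2 pt3}.} Let $\Gamma$ be the half-spin geometry of rank $n$, so that points are the members of one family of maximal singular subspaces of a hyperbolic quadric of Witt index $n$ and two points are collinear exactly when they meet in codimension $2$. Given $x,y$ with $d(x,y)=h\geq 2$, first pass to the residue of the (possibly empty) singular subspace $x\cap y$: this residue is a half-spin geometry of rank $2h$ in which $x$ and $y$ are represented by complementary maximal singular subspaces, and by the hereditary property it suffices to treat that situation. In the pure-spinor model of the spin module one may then assume that $x$ and $y$ are represented by $[e_\emptyset]$ and by $[e_{\{1,\dots,2h\}}]$ for a suitable basis; since $2h\geq 4$, a short computation of the Clifford annihilator of $e_\emptyset+\lambda e_{\{1,\dots,2h\}}$ shows that this vector is not a pure spinor for $\lambda\neq 0$ (the degenerate behaviour at $2h=2$ being exactly the collinear case). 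Hence $\langle\varepsilon(x),\varepsilon(y)\rangle$ meets $|\varepsilon|$ only in its two endpoints, and transparency follows. For $h=2$ this is simply the statement that two non-collinear points of $Q^+(7,\KK)$ span a secant line, which is Part~\ref{main pt2} of Theorem~\ref{main thm 1} with $k=1$ read through the triality $D_{4,4}\cong D_{4,1}$.

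\emph{Parts~\ref{main2 pt2} and \ref{main2 pt4}.} Both go by induction on the rank $n$, the key structural fact (again from Section~\ref{sec3.2}) being that a convex subspace of rank $r$ of the dual polar space of $\mathrm{O}(2n+1,\KK)$, respectively $\mathrm{O}^-(2n+2,\KK)$, is the dual polar space of $\mathrm{O}(2r+1,\KK)$, respectively $\mathrm{O}^-(2r+2,\KK)$, carrying the induced spin embedding. If $d(x,y)=h\geq 3$, then $x,y$ lie in a convex subspace $\Delta'$ of rank $h$ and the inductive hypothesis gives $\langle\varepsilon(x),\varepsilon(y)\rangle\not\subseteq\varepsilon(\Delta')=|\varepsilon|\cap\langle\varepsilon(\Delta')\rangle$, hence $\langle\varepsilon(x),\varepsilon(y)\rangle\not\subseteq|\varepsilon|$. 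If $d(x,y)=2$, then $x,y$ lie in a quad $Q$, and the two cases part. For $\mathrm{O}^-(2n+2,\KK)$ the quad is a Hermitian generalized quadrangle whose induced embedding is the natural one into a $\PG(3,\KK')$, with $\KK'$ the relevant quadratic extension and support the Hermitian surface; as a projective line lying on a Hermitian surface is necessarily a generator, i.e.\ a line of $Q$, we again get $\langle\varepsilon(x),\varepsilon(y)\rangle\not\subseteq|\varepsilon|$, and transparency follows (the base case $n=2$ being this same statement). For $\mathrm{O}(2n+1,\KK)$ the quad is a symplectic quadrangle $W(3,\KK)$ whose induced embedding is the natural one into a $\PG(3,\KK)$ — which has full support — so $\langle\varepsilon(x),\varepsilon(y)\rangle\subseteq\langle\varepsilon(Q)\rangle\subseteq|\varepsilon|$; together with the case $h\geq 3$ and the base case $n=2$ (where $\Gamma=W(3,\KK)$ is completely opaque, of diameter $2$) this yields tight $1$-opacity. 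The same conclusion can be reached more explicitly through the classical point-bijective inclusion of the $\mathrm{O}(2n+1,\KK)$ dual polar space into the half-spin geometry of rank $n+1$, under which the spin embedding becomes the spin embedding of that half-spin geometry: a line in the support is then, by Part~\ref{main2 pt3}, the image of a line of the half-spin geometry, and the elementary identity expressing the half-spin distance of $\varepsilon$-images as $\lceil d(x,y)/2\rceil$ shows that the support contains $\langle\varepsilon(x),\varepsilon(y)\rangle$ exactly when $d(x,y)\leq 2$.

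\emph{Main obstacle.} The real content lies in the hereditary statements and in correctly identifying the induced embeddings on residues, convex subspaces and quads together with their supports; once these are available the distance bookkeeping above is routine. Within that, the delicate point is the honest determination of the induced embedding on a quad and of its support: it is precisely here that the $\mathrm{O}(2n+1,\KK)$ case acquires its extra, non-geometric lines joining points at distance $2$ (forcing opacity rather than transparency), whereas in the $\mathrm{O}^-(2n+2,\KK)$ case the passage to the quadratic extension renders the analogous would-be extra lines non-rational, so that they do not lie in the support and transparency is preserved.
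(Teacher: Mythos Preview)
Your approach to Part~\ref{main2 pt3} is sound and more direct than the paper's: where the paper first establishes a tight degree of opacity via Lemma~\ref{l1} (needing the punctured-plane condition~(A\ref{A3}), which forces a separate group-theoretic argument when $\KK$ is finite of even order) and then localizes via Lemma~\ref{cI} to rank-$4$ residues, you cut straight to the convex hull $[x,y]$ and compute with pure spinors.

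Your inductive scheme for Parts~\ref{main2 pt2} and~\ref{main2 pt4}, however, has a gap at the top distance. When $d(x,y)=h=n=\diam(\Gamma)$, the convex subspace $\Delta'$ of rank $h$ is the whole geometry, so there is no smaller rank to induct from; the base case $n=2$ and the quad analysis at $h=2$ do not touch this. For Part~\ref{main2 pt2} your alternative route through Part~\ref{main2 pt3} and the half-spin distance formula $\lceil d(x,y)/2\rceil$ closes the gap, and is close in spirit to the paper's argument in Proposition~\ref{spin DQ}, which likewise leans on Part~\ref{main2 pt3} at the critical step. For Part~\ref{main2 pt4} you offer no alternative, and here the gap is genuine. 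The paper fills it by going through Part~\ref{main2 pt3} as well: if $\langle\varepsilon^-(x),\varepsilon^-(y)\rangle\subseteq|\varepsilon^-|\subseteq|\varepsilon^+|$, then by transparency of $\varepsilon^+$ this is a line of $\Delta_{n+1}$, and one must then show that any line of $\Delta_{n+1}$ entirely contained in $\eta(\cS_n)$ is already the $\eta$-image of a line of $\cS_n$. This is claim~$(*)$ of Proposition~\ref{spin elliptic}, proved there by an explicit matrix-rank computation; it is this step, not the quad analysis, that carries Part~\ref{main2 pt4}.
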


Claim~\ref{main pt1} of Theorem~\ref{main thm 1} has been known since long ago. It appears in Chow \cite{C49}, for instance. We will give a new proof of this claim, quite different from that of \cite{C49}, obtaining it as a corollary of a general lemma on the degrees of opacity of projective embeddings of a very large class of point-line geometries (see Section~\ref{Sec2}, Lemma~\ref{l1}). All remaining claims of Theorem~\ref{main thm 1} follow from the first one, with some additional work in the case of hermitian dual polar spaces (Claim~\ref{main2 pt1}). Indeed the embeddings considered in Claims~\ref{main pt2}--\ref{main2 pt1} are induced by the Pl\"ucker embedding of a projective grassmannian.

Similarly, the embeddings considered in Claims~\ref{main2 pt2} and \ref{main2 pt4} of
Theorem~\ref{main thm 2} are induced by the spin embedding of a half-spin geometry.
The third claim of Theorem~\ref{main thm 2} will be obtained a consequence of the first one.
Claims~\ref{main2 pt3} and \ref{main2 pt2} of Theorem~\ref{main thm 2} will be proved with the help of the above mentioned Lemma~\ref{l1} and one more statement in the same vein as that lemma (Section~\ref{Sec2}, Lemma~\ref{cI}).

The reader might wonder why in Theorem~\ref{main thm 1} the dual polar space $\cS_n$ is considered only when $\cS$ is of symplectic type or associated to a hermitian form in a $2n$-dimensional vector space. This is due to the fact that in all remaining cases the Pl\"ucker mapping, which can still be defined for $\cS_n$, is not a projective embedding. Indeed it maps lines of $\cS_n$ onto pairs of points, conics, quadrics, unitals or other kinds of sets of points, according to the type of $\cS$.

The next corollary immediately follows from Theorems~\ref{main thm 1} and \ref{main thm 2} and the fact that the transparency of an embedding $\varepsilon$ implies the equality $\aut(|\varepsilon|) = \Aut(\varepsilon)$. This corollary incorporates and generalizes some results of Chow~\cite{C49}, which correspond to cases (\ref{tt1}) and (\ref{t3}) below (but only dual polar spaces of symplectic type are considered in \cite{C49}).

\begin{co}\label{co}
Let $\Gamma$ be one of the following point-line geometries:
  \begin{enumerate}[(a)]
\item\label{tt1} The $k$-grassmannian of a finite dimensional projective geometry defined over a field.
\item\label{t2}  The $k$-grassmannian of a polar space of rank $n > k$ associated to a quadratic or hermitian form of a finite dimensional vector space over a field.
\item\label{t3} The dual of a polar space of rank $n$ associated to an alternating or hermitian form of a $2n$-dimensional vector space.
\item\label{tt3}  The dual polar space associated to the orthogonal group $\mathrm{O}^-(2n+2, \KK)$.
\item\label{t4}  A half-spin geometry.
\end{enumerate}
\noindent Let $\varepsilon\colon \Gamma\rightarrow \Sigma$ be the Pl\"ucker embedding if $\Gamma$ is as in (\ref{tt1}), (\ref{t2}) or (\ref{t3}) and the spin embedding in cases (\ref{tt3}) and (\ref{t4}). Then
$\aut(|\varepsilon|)  =  \Aut(\varepsilon)  \cong \Aut(\Gamma)_\varepsilon = \Aut(\Gamma)$.
\end{co}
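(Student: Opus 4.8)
The plan is to split the corollary into the three links of its chain --- $\aut(|\varepsilon|)=\Aut(\varepsilon)$, then $\Aut(\varepsilon)\cong\Aut(\Gamma)_\varepsilon$, then $\Aut(\Gamma)_\varepsilon=\Aut(\Gamma)$ --- and to verify each of them uniformly over the list (\ref{tt1})--(\ref{t4}). The first link is delivered by Theorems~\ref{main thm 1} and \ref{main thm 2}: in each of the five families $\varepsilon$ is transparent, namely by Claim~\ref{main pt1} in case (\ref{tt1}); by Claim~\ref{main pt2} in case (\ref{t2}), where $\cS$ is orthogonal or hermitian (so not symplectic) and $n>k$; by the final assertion of Claim~\ref{main pt3} and by Claim~\ref{main2 pt1} in the symplectic, resp.\ hermitian, sub-cases of (\ref{t3}); by Claim~\ref{main2 pt4} in case (\ref{tt3}); and by Claim~\ref{main2 pt3} in case (\ref{t4}). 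Since a transparent embedding has $\cL^\varepsilon$ equal to the set of all projective lines of $\Sigma$ contained in $|\varepsilon|$, every element of $\aut(|\varepsilon|)$ stabilises $\cL^\varepsilon$, i.e.\ lies in $\Aut(\varepsilon)$; this is the implication ``transparent $\Rightarrow\aut(|\varepsilon|)=\Aut(\varepsilon)$'' already noted in Section~\ref{Sec1}. The second link is, as recorded there, completely general: connectedness of $\Gamma$ together with (E1) and (E2) makes $\Aut(\varepsilon)$ act faithfully on $|\varepsilon|$, whence $g^\varepsilon\mapsto g$ is an isomorphism $\Aut(\varepsilon)\to\Aut(\Gamma)_\varepsilon$.

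The remaining link, $\Aut(\Gamma)_\varepsilon=\Aut(\Gamma)$ --- the \emph{homogeneity} of the Pl\"ucker and spin embeddings --- is the only substantive point, and I expect it to be the crux of the write-up, although it is by now standard. I would establish it by producing, for each $g\in\Aut(\Gamma)$, an explicit lifting $g^\varepsilon$. In each of the five families $\Aut(\Gamma)$ is induced by the group of invertible semilinear transformations of the defining vector space $V$ that preserve the relevant datum (a sesquilinear or quadratic form in cases (\ref{t2})--(\ref{tt3}); no datum in case (\ref{tt1})), possibly enlarged by a correlation of $\PG(V)$ in case (\ref{tt1}) when $\dim V=2k$, or by a graph automorphism of $\mathrm{O}^{+}(2n,\KK)$ in the $D_4$ instance of case (\ref{t4}). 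A semilinear transformation of $V$ induces functorially a semilinear transformation of the Pl\"ucker space $\wedge^{k}V$ (respectively of the spin module), hence a collineation of $\Sigma$ that stabilises $\Gamma^\varepsilon$ and restricts on $\Gamma$ to the given automorphism; a correlation $V\to V^{*}$ with $\dim V=2k$ induces $\wedge^{k}V\to\wedge^{k}(V^{*})\cong(\wedge^{k}V)^{*}$, which, composed with the identification $(\wedge^{k}V)^{*}\cong\wedge^{k}V$ afforded by a nonzero element of $\wedge^{2k}V$, gives a collineation of $\PG(\wedge^{k}V)$ stabilising the Grassmannian and inducing the correlation; and the graph automorphism relevant to $D_4$ fixes the chosen half-spin module (it interchanges the natural module with the other half-spin module) and so likewise induces a collineation of $\Sigma$ stabilising the half-spin geometry. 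A cleaner packaging of the same fact is to invoke that each of these embeddings is known to be homogeneous --- for instance because it is (absolutely) universal, or at least the unique embedding of its finite dimension --- together with the general principle that then, for $g\in\Aut(\Gamma)$, the embedding $\varepsilon g$ is again of the same type, hence isomorphic to $\varepsilon$ by a unique collineation of $\Sigma$, which is precisely $g^\varepsilon$.

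Combining the three links yields $\aut(|\varepsilon|)=\Aut(\varepsilon)\cong\Aut(\Gamma)_\varepsilon=\Aut(\Gamma)$ for every $\Gamma$ in the list. What needs care --- and where I would concentrate the effort --- are the small and exceptional configurations: low-dimensional Grassmannians, where one must stay within the hypotheses under which $\Aut(\Gamma)$ is induced by collineations and correlations of $\PG(V)$ (cf.\ Chow~\cite{C49}); the role of triality when $n=4$ in case (\ref{t4}); and the $D_n$ oriflamme phenomenon behind case (\ref{t2}). In each of these the exact shape of $\Aut(\Gamma)$ and the liftability of its generators must be matched against the literature, but in none of them does a non-liftable automorphism appear, so the displayed chain holds uniformly.
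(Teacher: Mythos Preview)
Your proposal is correct and follows essentially the same approach as the paper: the corollary is deduced by combining the transparency established in Theorems~\ref{main thm 1} and \ref{main thm 2} (yielding $\aut(|\varepsilon|)=\Aut(\varepsilon)$), the general isomorphism $\Aut(\varepsilon)\cong\Aut(\Gamma)_\varepsilon$ from Section~\ref{Sec1}, and the homogeneity of each embedding. The paper handles homogeneity by pointing to Section~\ref{Sec4p}, where it is recorded case by case as a known fact (often inherited from the homogeneity of the ambient Pl\"ucker or spin embedding); your write-up supplies more detail on how to lift the automorphisms explicitly, but this is elaboration rather than a different route.
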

Note that the equality $\Aut(\Gamma)_\varepsilon = \Aut(\Gamma)$, which says that $\varepsilon$ is homogeneous, has nothing to do with the transparency of $\varepsilon$. It holds in each of the cases considered in Theorems~\ref{main thm 1} and \ref{main thm 2}, no matter if $\varepsilon$ is transparent or not (see Section 4).

Case~\ref{main pt3} of Theorem~\ref{main thm 1} with $1 < k < n$ is not mentioned in Corollary~\ref{co}. Indeed, if $\Gamma = \cS_k$ with $1 < k < n$, the polar space $\cS$ is of symplectic type and $\varepsilon$ is its Pl\"ucker embedding, then $\varepsilon$ is not transparent. Nevertheless, as we shall prove in Section~\ref{Sec6}, the equality $\aut(|\varepsilon|) = \Aut(\varepsilon)$ holds also in this case.

On the other hand, with $\cS$ as above but $k = 1$, the Pl\"ucker embedding $\varepsilon:\cS_1\rightarrow \PG(2n-1,\KK)$ is completely opaque; hence $\aut(|\varepsilon|) = \PgL(2n,\KK) > \mathrm{PSp}(2n,\KK)\cdot\Aut(\KK) = \Aut(\cS)$.

Finally, let $\Gamma$ be the dual of the polar space associated to the orthogonal group $\mathrm{O}(2n+1,\KK)$ and let $\varepsilon:\Gamma\rightarrow\PG(2^n,\KK)$ be its spin embedding. Let $\Gamma'$ be the half-spin geometry of rank $n+1$
associated to $O^+(2n+2,\KK)$ defined over $\KK$. Then $\Gamma$ is a subgeometry (but not a subspace) of $\Gamma'$ and $\varepsilon$ is induced by the spin embedding $\varepsilon'$ of $\Gamma'$. Moreover $|\varepsilon| = |\varepsilon'|$ (see Section~\ref{3.2 Spin}). Consequently, $\aut(|\varepsilon|) = \aut(|\varepsilon'|) = \Aut(\varepsilon') > \Aut(\varepsilon)$. So, $\aut(|\varepsilon|) > \Aut(\varepsilon)$ in this case too. \\

\noindent
{\bf Organization of the paper.} In Section~\ref{Sec2} we shall prove a few general results on opacity of embeddings, to be exploited in the proof of Theorems~\ref{main thm 1} and~\ref{main thm 2}. The geometries and the embeddings considered in those two theorems are described in Sections~\ref{Sec3} and \ref{Sec4p}. Section~\ref{Sec5p} contains the proofs of Theorems~\ref{main thm 1} and~\ref{main thm 2}. Section~\ref{Sec6} is devoted to the proof of the equality $\aut(|\varepsilon|) = \Aut(\varepsilon)$ when $\varepsilon$ is the Pl\"{u}cker embedding of a symplectic $k$-grassmannian, $k > 1$.

\section{A few general results on transparency and opacity}
\label{Sec2}

Throughout this section $\varepsilon:\Gamma\rightarrow\Sigma$ is a projective embedding of a point-line geometry $\Gamma$ with finite diameter $\diam(\Gamma) < \infty$. As stated in the Introduction of this paper, $\chi^\uparrow_\varepsilon$ and $\chi^\downarrow_\varepsilon$ are the lower and upper degrees of opacity of $\varepsilon$.

As we did in the definition of $\chi^\uparrow_\varepsilon$ and $\chi^\downarrow_\varepsilon$, given two distinct points $p, q$ of $\Sigma$ we denote by $\langle p,q\rangle$ the line of $\Sigma$ spanned by $p$ and $q$. More generally, given a set $X$ of points of $\Sigma$, we will denote by $\langle X\rangle$ the subspace of $\Sigma$ spanned by $X$.

\subsection{A sufficient condition for the equality $\chi^\uparrow_\varepsilon = \chi^\downarrow_\varepsilon$ to hold} \label{Sec 2.1}

The following lemma provides a sufficient condition for $\varepsilon$ to admit a tight degree of opacity.

Recall that, as stated in the Introduction,  $\Aut(\Gamma)_\varepsilon$ is the subgroup of $\Aut(\Gamma)$ formed by all automorphisms $g\in \Aut(\Gamma)$ that lift to $\Gamma^\varepsilon$. Also recall that, given a graph $G$, a group of automorphisms of $G$ is said to be \emph{distance-transitive} if it acts transitively on the set of ordered pairs of vertices of $G$ at distance $k$, for every $k = 1, 2,\ldots, \diam(G)$.

Here and henceforth by a \emph{punctured plane} we mean a projective plane minus a point.

\begin{lemma}
\label{l1}
Assume the following:
\begin{enumerate}[{\rm({A}1)}]
\item\label{A1} for any positive integer $k$,  every point $p$ and every line $\ell$ of $\Gamma$, if $d(p,x) = d(p,y) = k$ for two distinct points $x, y \in \ell$, then  $d(p,z) = k-1$ for at most one point $z\in \ell$ and all remaining points of $\ell$ have distance $k$ from $p$;
\item\label{A2} The group $\Aut(\Gamma)_\varepsilon$ acts distance-transitively on the collinearity graph of $\Gamma$;
\item\label{A3} The intersection $\alpha\cap |\varepsilon|$ is not a punctured plane, for any plane $\alpha$ of $\Sigma$.
\end{enumerate}
Then $\chi^\uparrow_\varepsilon = \chi^\downarrow_\varepsilon$.
\end{lemma}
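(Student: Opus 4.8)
The plan is to prove the contrapositive-flavoured statement: assuming $\chi^\uparrow_\varepsilon < \chi^\downarrow_\varepsilon$, we will produce a plane $\alpha$ of $\Sigma$ with $\alpha \cap |\varepsilon|$ a punctured plane, contradicting (A3). Set $k := \chi^\uparrow_\varepsilon + 1$. By definition of $\chi^\uparrow_\varepsilon$, there exist points $x, y \in \cP$ with $d(x,y) = k$ and $\langle \varepsilon(x), \varepsilon(y)\rangle \not\subseteq |\varepsilon|$; and since $\chi^\uparrow_\varepsilon < \chi^\downarrow_\varepsilon$, we also have $k \le \chi^\downarrow_\varepsilon$, so there exist points $u, v \in \cP$ with $d(u,v) = k$ and $\langle \varepsilon(u), \varepsilon(v)\rangle \subseteq |\varepsilon|$. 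The first key step is to use hypothesis (A2): since $\Aut(\Gamma)_\varepsilon$ is distance-transitive on the collinearity graph and every element lifts to a collineation of $\Sigma$ stabilising $|\varepsilon|$, the property "$\langle \varepsilon(a), \varepsilon(b)\rangle \subseteq |\varepsilon|$" depends only on $d(a,b)$. This is a contradiction as stated — so actually the argument must be more careful: (A2) alone would already force $\chi^\uparrow_\varepsilon = \chi^\downarrow_\varepsilon$ unless there is some subtlety. Let me reconsider: the subtlety is that $\langle\varepsilon(x),\varepsilon(y)\rangle$ need not meet $|\varepsilon|$ only in $\varepsilon(x),\varepsilon(y)$; the relevant dichotomy distinguishing $\chi^\uparrow$ from $\chi^\downarrow$ concerns whether there exists *any* pair at distance $k$ whose span lies in $|\varepsilon|$ versus whether *all* such pairs do. So the right reading is: by (A2) the set of pairs at distance $k$ is a single orbit, hence either all of them have span in $|\varepsilon|$ or none do. Thus once $k = \chi^\uparrow_\varepsilon + 1 > \chi^\uparrow_\varepsilon$, no pair at distance $k$ has span contained in $|\varepsilon|$, giving $\chi^\downarrow_\varepsilon < k$, i.e. $\chi^\downarrow_\varepsilon \le \chi^\uparrow_\varepsilon$, and we are done — provided (A2) really does the job. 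The role of (A1) and (A3), then, is to handle the boundary issue that the definition of $\chi^\uparrow_\varepsilon$ quantifies over pairs with $d(x,y) \le h$, not $= h$: we must show that if $\langle\varepsilon(x),\varepsilon(y)\rangle \subseteq |\varepsilon|$ for *all* pairs at distance *exactly* $k$, then the same holds for all pairs at distance $\le k$.

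So the real structure is: Step 1, reduce (using (A2)) to showing that for each $k$, "all pairs at distance exactly $k$ have span in $|\varepsilon|$" is equivalent to "all pairs at distance $\le k$ have span in $|\varepsilon|$", i.e. the "$\le h$" in the definition of $\chi^\uparrow$ can be replaced by "$= h$". Step 2, the downward induction: suppose all pairs at distance $k$ have span in $|\varepsilon|$; take $x, y$ with $d(x,y) = k - j$ for some $j \ge 1$; we want $\langle\varepsilon(x),\varepsilon(y)\rangle \subseteq |\varepsilon|$. Pick a geodesic and a point $z$ with $d(z, x) = k$ and $z$ "past" $y$ along a geodesic, or use (A1) to find, on a suitable line $\ell$ through $y$, points at distance $k$ from $x$. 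Concretely, choose a line $\ell$ through $y$ that moves away from $x$; by (A1), $\ell$ contains at most one point at distance $d(x,y) - 1 = k - j - 1$ from $x$ and all other points of $\ell$ are at distance $k - j + 1$ (wait — (A1) with the roles: if two points of $\ell$ are equidistant from $p$, the third behaves; here we want the generic point of $\ell$ to be at distance $k-j+1$). Iterating, we reach a point $w$ with $d(x, w) = k$, $\varepsilon(w) \in \langle \varepsilon(y), \dots\rangle$, and then the plane spanned by $\varepsilon(x), \varepsilon(y), \varepsilon(w)$ — where $\langle\varepsilon(x),\varepsilon(w)\rangle \subseteq |\varepsilon|$ by hypothesis and the line $\varepsilon(\ell) \ni \varepsilon(y), \varepsilon(w)$ is in $|\varepsilon|$ — meets $|\varepsilon|$ in at least two full lines. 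Then (A3) forces this plane to meet $|\varepsilon|$ in the *whole* plane (a plane meeting $|\varepsilon|$ in two lines but not everything would have to be a punctured plane or less — one checks the combinatorial possibilities for $\alpha \cap |\varepsilon|$), so in particular $\langle\varepsilon(x),\varepsilon(y)\rangle \subseteq |\varepsilon|$.

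The main obstacle I anticipate is Step 2: orchestrating (A1) to guarantee that, starting from $x, y$ at distance $k - j$, one can build a chain of lines $\ell_1, \dots$ each lying in a plane with $\varepsilon(x)$ such that at each stage the plane already contains two of our lines, letting (A3) propagate "full plane in $|\varepsilon|$" inward. One must verify (A1) genuinely supplies a point of $\ell$ at distance $k$ — i.e. that moving away from $x$ along lines does reach distance exactly $\diam$-capped values and not get stuck — and that the three points $\varepsilon(x), \varepsilon(y), \varepsilon(w)$ are in general position (non-collinear), so that they do span a plane; collinearity would be a degenerate case needing separate (easy) treatment. A secondary point to nail down is the precise case analysis behind (A3): one should record that for a plane $\alpha$ of $\Sigma$, if $\alpha \cap |\varepsilon|$ contains two distinct lines of $\Sigma$ and $\alpha \cap |\varepsilon| \ne \alpha$, then $\alpha \cap |\varepsilon|$ is a punctured plane (the missing point being the unique point of $\alpha$ off both lines that is not covered), which is exactly what (A3) forbids — so (A3) is used in the sharp form "two lines in, forces all in". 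I expect the rest — the equivalence in Step 1 and the bookkeeping with $\chi^\uparrow, \chi^\downarrow$ — to be routine once Step 2 is in place.
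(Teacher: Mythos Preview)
Your overall strategy --- using (A2) to reduce the question to showing that the set $I$ of distances $j$ for which $\langle\varepsilon(a),\varepsilon(b)\rangle\subseteq|\varepsilon|$ whenever $d(a,b)=j$ is an initial segment --- is sound, and in fact is essentially what the paper does (the paper phrases it as an upward induction on $k>\chi^\uparrow_\varepsilon+1$, but this is equivalent). However, your execution of Step~2 contains a genuine gap.

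The problem is your use of (A3). You claim that if a plane $\alpha$ of $\Sigma$ meets $|\varepsilon|$ in two full lines but not in all of $\alpha$, then $\alpha\cap|\varepsilon|$ must be a punctured plane. This is false. A punctured plane is a plane minus \emph{exactly one} point; (A3) forbids only that. In your configuration, with $\alpha=\langle\varepsilon(x),\varepsilon(\ell)\rangle$ and all points of $\ell\setminus\{y\}$ at distance $k$ from $x$, you do get $\alpha\setminus|\varepsilon|\subseteq M\setminus\{\varepsilon(x),\varepsilon(y)\}$ where $M=\langle\varepsilon(x),\varepsilon(y)\rangle$, but nothing prevents $\alpha\setminus|\varepsilon|$ from consisting of two or more points of $M$. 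Hypothesis (A3) does \emph{not} force the whole plane into $|\varepsilon|$; it only excludes the case $|\alpha\setminus|\varepsilon||=1$.

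The paper uses (A3) in exactly the opposite way: it concludes that the set $\{R_1,\dots,R_t\}:=\alpha\setminus|\varepsilon|$ has $t\geq 2$, and then applies (A1) a \emph{second time}, now to a generic point $x'$ with $\varepsilon(x')\in\alpha\setminus(L\cup M)$ and the same line $\ell$. For each $R_i$ there is a point $x_i\in\ell$ with $R_i\in\langle\varepsilon(x'),\varepsilon(x_i)\rangle$, and by (A2) the distances $d(x',x_i)$ are all different from the distances $d(x',z)$ for $z\in\ell\setminus\{x_1,\dots,x_t\}$. Since $t\geq 2$ and $|\ell\setminus\{x_1,\dots,x_t\}|\geq 2$, this gives two distance-values on $\ell$, each taken at least twice, contradicting (A1). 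This second application of (A1) is the missing ingredient in your plan; (A3) alone cannot close the argument.
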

\begin{proof}
Let $d := \diam(\Gamma)$. When $\chi^\uparrow_\varepsilon = d-1$ there is nothing to prove. Thus, we assume that $\chi^\uparrow_\varepsilon < d-1$, namely there are points $a,b\in \Gamma$ such that $\langle \varepsilon(a),\varepsilon(b)\rangle$ is not contained in $|\varepsilon|$. In view of (A\ref{A2}), the same holds true for any pair of points $x, y\in \Gamma$ with $d(x,y) = d(a,b)$. We shall prove that the following property holds:
 \begin{itemize}
 \item[$(*)$] For any $k > \chi^\uparrow_\varepsilon+1$, the line $\langle \varepsilon(x),\varepsilon(y)\rangle$ is not fully contained in $|\varepsilon|$, for any choice of $x$ and $y$ at distance $k$.
 \end{itemize}
The equality $\chi^\uparrow_\varepsilon = \chi^\downarrow_\varepsilon$  is a straightforward consequence of $(*)$ and  the definition of $\chi^\downarrow_\varepsilon$.

We shall prove $(*)$ by induction on $k > \chi^\uparrow_\varepsilon +1$. For $k = \chi^\uparrow_\varepsilon+2$ property $(*)$ immediately follows from the definition of $\chi^\uparrow_\varepsilon$ and hypothesis (A\ref{A2}).

Assume $k > \chi^\uparrow_\varepsilon+2$ and that $(*)$ holds true for any value $\chi^\uparrow_\varepsilon+2,\ldots, k-1$. Take $p,q\in\cP$ such that $d(p,q) = k$ and suppose, by way of contradiction, $\langle \varepsilon(p),\varepsilon(q)\rangle \subseteq |\varepsilon|$. Let $p= p_0\sim p_1\sim \ldots\sim p_{k-1}\sim p_k = q$ be a shortest
path from $p$ to $q$ in the collinearity graph of $\Gamma$. Put $r := p_{k-1}$, for short. Also, let $\ell$ be the line of $\Gamma$ through $r$ and $q$ and put $L := \varepsilon(\ell)$. Then, by the inductive hypothesis, the projective line $M := \langle \varepsilon(p),\varepsilon(r)\rangle$ contains at least one point outside $|\varepsilon|$, while the line joining $\varepsilon(p)$ with $\varepsilon(x)$ for $x\in\ell$ and $x\neq r$ is entirely contained in $|\varepsilon|$, by the hypotheses made on $p$ and $q$ and assumptions (A\ref{A1}) and (A\ref{A2}). It follows that the points of the plane $\alpha := \langle \varepsilon(p), \varepsilon(q),\varepsilon(r)\rangle$ that do not belong to $|\varepsilon|$ are on the line $M$. Thus, $\alpha\setminus (\alpha\cap |\varepsilon|) = \{R_1,\ldots, R_t\}$ for suitable points $R_1,\ldots, R_t\in  M=\langle \varepsilon(p),\varepsilon(r)\rangle$.

Note that $t$ might be infinite but, by assumption (A\ref{A3}),
$t > 1$. Given now a point $X = \varepsilon(x) \in \alpha\setminus(L\cup M)$ and $i =1,2,\ldots,t$,
let $x_i$ be the point of $\ell$ such that $\langle\varepsilon(x),\varepsilon(x_i)\rangle$ meets $M$ in $R_i$. Then $\chi^\uparrow_\varepsilon +1 < d(x,x_i) \neq k$ (by the inductive hypothesis and assumption (A\ref{A2})) while $d(x,y) \in \{1, 2,\ldots, \chi^\uparrow_\varepsilon+1\}\cup\{k, k+1, k+2,\ldots\}$ for any point $y\in \ell\setminus \{x_1,\ldots, x_t\}$. The latter set contains at least two points, namely $r$ and at least one more point $x'$ where $\varepsilon(x') \in \langle\varepsilon(x),\varepsilon(p)\rangle$. Moreover, in view of (A\ref{A1}), there is a number $h$ such that either $d(x,z) = h$ for any $z\in\ell$ or $d(x,z_0) = h-1$ for just one point $z_0\in\ell$ and $d(x,z) = h$ for any point $z\in \ell\setminus \{z_0\}$.

In the situation we are considering, we have $d(x,y) \neq d(x,x_i)$ for any $y\in \ell\setminus\{x_1,\ldots, x_t\}$ and $i = 1, 2,\ldots, t$ (by (A\ref{A2})). As both $t$ and $|\ell\setminus\{x_1,\ldots, x_t\}|$ are greater than $1$, this implies that when $z$ ranges in $\ell$
the number $d(x,z)$ assumes at least two distinct values and each of them is taken at least twice. This contradicts hypothesis (A\ref{A1}).
\end{proof}

The next two corollaries immediately follow from Lemma \ref{l1}.

\begin{corollary}
\label{c0}
  If $\diam(\Gamma) = 2$ and $\Gamma$ and $\varepsilon$ satisfy the hypotheses of Lemma~\ref{l1}, then $\varepsilon$ is either transparent or completely opaque.
\end{corollary}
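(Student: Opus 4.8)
The plan is to apply Lemma~\ref{l1} directly, since Corollary~\ref{c0} is simply the specialization of that lemma to the case $\diam(\Gamma) = 2$. Under the standing hypotheses of Lemma~\ref{l1} we have $\chi^\uparrow_\varepsilon = \chi^\downarrow_\varepsilon$, so $\varepsilon$ admits a tight degree of opacity $\chi_\varepsilon := \chi^\uparrow_\varepsilon = \chi^\downarrow_\varepsilon$. The only thing left to observe is that when $\diam(\Gamma) = 2$ the chain of inequalities $0 \leq \chi^\uparrow_\varepsilon \leq \chi^\downarrow_\varepsilon \leq \diam(\Gamma) - 1$ recorded in the Introduction forces $0 \leq \chi_\varepsilon \leq 1$, so $\chi_\varepsilon \in \{0,1\}$.

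It then remains to translate these two numerical possibilities into the stated geometric dichotomy. If $\chi_\varepsilon = 0$, then $\varepsilon$ is tightly $0$-opaque, and by the remark in the Introduction this is equivalent to $\varepsilon$ being transparent. If instead $\chi_\varepsilon = 1 = \diam(\Gamma) - 1$, then $\varepsilon$ is tightly $(\diam(\Gamma)-1)$-opaque, which — again by the Introduction, using property (E1) — is equivalent to $|\varepsilon|$ being the whole point-set of $\Sigma$; since moreover $\diam(\Gamma) = 2 > 1$, this is precisely the definition of $\varepsilon$ being completely opaque. Hence $\varepsilon$ is either transparent or completely opaque, as claimed.

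There is no real obstacle here: the corollary is purely a bookkeeping consequence of Lemma~\ref{l1} together with the definitions of "transparent" and "completely opaque" given in the Introduction. The only point requiring a moment's care is making sure that the case $\chi_\varepsilon = 1$ genuinely lands in the "completely opaque" regime rather than in some intermediate situation — but this is automatic because $1 = \diam(\Gamma)-1$ when $\diam(\Gamma) = 2$, and the intermediate cases (which would require $0 < \chi_\varepsilon < \diam(\Gamma)-1$) are vacuous for diameter $2$. Thus the proof is essentially a one-line deduction from the lemma and the interpretive remarks already established.

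\begin{proof}
By Lemma~\ref{l1}, $\chi^\uparrow_\varepsilon = \chi^\downarrow_\varepsilon$, so $\varepsilon$ is tightly $\chi_\varepsilon$-opaque for $\chi_\varepsilon := \chi^\uparrow_\varepsilon = \chi^\downarrow_\varepsilon$. Since $\diam(\Gamma) = 2$, the inequalities $0 \leq \chi^\uparrow_\varepsilon \leq \chi^\downarrow_\varepsilon \leq \diam(\Gamma)-1$ give $\chi_\varepsilon \in \{0,1\}$. If $\chi_\varepsilon = 0$, then $\varepsilon$ is transparent, as a projective embedding is transparent precisely when it is tightly $0$-opaque. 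If $\chi_\varepsilon = 1$, then $\chi_\varepsilon = \diam(\Gamma)-1$, so $|\varepsilon|$ is the full point-set of $\Sigma$; as $\diam(\Gamma) = 2 > 1$, the embedding $\varepsilon$ is completely opaque.
\end{proof}
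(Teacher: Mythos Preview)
Your proof is correct and is exactly the intended argument: the paper itself simply states that Corollary~\ref{c0} ``immediately follows from Lemma~\ref{l1}'' without further detail, and you have spelled out that immediate deduction precisely.
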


\begin{corollary}
\label{c00}
Let $\Gamma$ and $\varepsilon$ satisfy the hypotheses of Lemma~\ref{l1}. Then $\varepsilon$ is transparent if and only if $\langle\varepsilon(x),\varepsilon(y)\rangle\not\subseteq|\varepsilon|$ for at least one pair of points $x, y$ of $\Gamma$ at distance $2$.
\end{corollary}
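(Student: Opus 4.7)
The plan is to combine three ingredients already at hand: the equivalence, noted in the Introduction, between transparency of $\varepsilon$ and the condition $\chi^\uparrow_\varepsilon = \chi^\downarrow_\varepsilon = 0$; the conclusion $\chi^\uparrow_\varepsilon = \chi^\downarrow_\varepsilon$ of Lemma~\ref{l1}; and the definition of $\chi^\uparrow_\varepsilon$ given in~\eqref{hk}.

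First I would collapse the biconditional to a statement purely about $\chi^\uparrow_\varepsilon$. Since the hypotheses of Lemma~\ref{l1} are in force, the two degrees of opacity coincide. Consequently $\varepsilon$ is transparent, i.e.\ tightly $0$-opaque, precisely when $\chi^\uparrow_\varepsilon = 0$.

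Next I would unwind the definition of $\chi^\uparrow_\varepsilon$. Condition (E2) of the definition of projective embedding forces $\langle\varepsilon(x),\varepsilon(y)\rangle\subseteq|\varepsilon|$ for every pair with $d(x,y)\leq 1$, so $\chi^\uparrow_\varepsilon + 1 \geq 1$ unconditionally. The equality $\chi^\uparrow_\varepsilon + 1 = 1$ then amounts to saying that $h=2$ fails the defining condition in~\eqref{hk}, i.e.\ some pair $x,y$ with $d(x,y)\leq 2$ satisfies $\langle\varepsilon(x),\varepsilon(y)\rangle\not\subseteq|\varepsilon|$. Since such a failure cannot occur at distance $\leq 1$, the pair must realise $d(x,y)=2$ exactly. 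Assembling the two reductions yields the corollary.

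I expect no real obstacle: the argument is essentially a bookkeeping chase through definitions, with Lemma~\ref{l1} doing all the genuine work. The only point worth watching is that the direction ``transparent $\Rightarrow$ existence of such a pair'' implicitly uses $\diam(\Gamma)\geq 2$, so that pairs at distance $2$ actually exist; when $\diam(\Gamma)=1$ the geometry $\Gamma$ is itself a projective space, $\varepsilon$ is an isomorphism, and the statement becomes degenerate.
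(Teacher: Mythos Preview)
Your proposal is correct and matches the paper's approach: the paper offers no argument beyond the remark that Corollary~\ref{c00} ``immediately follow[s] from Lemma~\ref{l1}'', and what you have written is precisely the unpacking of that remark via the definitions in~\eqref{hk}. Your observation about the degenerate case $\diam(\Gamma)=1$ is apt and goes slightly beyond what the paper says.
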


As a direct consequence of Corollary~\ref{c0} we get back the following well known fact: if $\Gamma$ is a classical polar space and $\varepsilon$ embeds $\Gamma$ in $\Sigma = \PG(V)$ as the polar space $\Gamma^\varepsilon$ associated to a sesquilinear or pseudoquadratic form of $V$, then either $\varepsilon$ is transparent or $\Gamma^\varepsilon$ is associated to an alternating form.

\subsection{From local to global transparency}
\label{Sec 2.2}

We recall that a \emph{subspace} of $\Gamma$ is a set $S$ of points of $\Gamma$ such that if a line $\ell$ of $\Gamma$ meets $S$ in at least two points then $\ell\subseteq S$. Subspaces are often regarded as subgeometries, by taking as lines of a subspace $S$ of $\Gamma$ the lines of $\Gamma$ contained in $S$. A subspace is said to be \emph{connected} if it is connected as a point-line geometry.

Given a connected nonempty subspace $S$ of $\Gamma$, let $\Sigma_S := \langle \varepsilon(S)\rangle$ be the span of $\varepsilon(S)$ in $\Sigma$ and let $\varepsilon_S$ be the restriction of $\varepsilon$ to $S$, regarded as a mapping from $S$ to $\Sigma_S$. Then $\varepsilon_S:S\rightarrow\Sigma_S$ is an embedding of $S$, the latter being regarded as a subgeometry of $\Gamma$.

Assume that $\Gamma$ admits a nonempty family $\mathfrak{S}$ of connected nonempty subspaces of $\Gamma$, satisfying the following two conditions:
\newcounter{rem}
\begin{enumerate}[(S1)]
\item\label{S1} $\Aut(\Gamma)_\varepsilon$ stabilizes $\mathfrak{S}$ and acts transitively on it.
\item\label{S2} For $S \in \mathfrak{S}$, we have $|\varepsilon_S| = \Sigma_S\cap |\varepsilon|$.
\setcounter{rem}{\value{enumi}}
\end{enumerate}
In view of (S\ref{S1}), all members of $\mathfrak{S}$ have the same diameter, say $d_0$.
Moreover, still by (S\ref{S1}),  for any choice of $S$ and $S'$ in the same family $\mathfrak{S}$ there exists $g\in \Aut(\Gamma)_\varepsilon$ such that $S' = g(S)$ and $\varepsilon_{S'} = g^\varepsilon\varepsilon_S$. It follows that $\varepsilon_S$ and $\varepsilon_{S'}$ have the same opacity degrees, say  $\chi^\uparrow_0 := \chi^\uparrow_{\varepsilon_S} = \chi^\uparrow_{\varepsilon_{S'}}$ and $\chi^\downarrow_0 := \chi^\downarrow_{\varepsilon_S} = \chi^\downarrow_{\varepsilon_{S'}}$.

\begin{lemma}
\label{cI}
Suppose that (S\ref{S1}) and (S\ref{S2}) hold for a nonempty family $\mathfrak{S}$ of connected nonempty subspaces of $\Gamma$. Assume also the following:

\begin{enumerate}[\rm{(B}1\rm{)}]
\item\label{B1} The group $\Aut(\Gamma)_\varepsilon$ acts distance transitively on the collinearity graph of $\Gamma$.
\item\label{B2} The embedding $\varepsilon$ admits a tight degree of opacity $\chi_\varepsilon$.
\item\label{B3}  With $d_0$, $\chi^\uparrow_0$ and $\chi^\downarrow_0$ as above, we have $\chi_0^\uparrow = \chi_0^\downarrow < d_0-1$. In other words, $d_0 > 1$ and the members of $\mathfrak{S}$ admits a tight degree of opacity $\chi_0 < d_0-1$.
\end{enumerate}
Then $\chi_\varepsilon = \chi_0$.
\end{lemma}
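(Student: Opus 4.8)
The goal is to show that the tight degree of opacity $\chi_\varepsilon$ of the global embedding equals the tight degree of opacity $\chi_0$ of the embeddings $\varepsilon_S$ of the members of $\mathfrak{S}$. I would prove the two inequalities $\chi_\varepsilon \leq \chi_0$ and $\chi_\varepsilon \geq \chi_0$ separately, exploiting that by (B\ref{B2}) a single number $\chi_\varepsilon$ controls everything and by (B\ref{B3}) a single number $\chi_0$ controls the subspaces.

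\textbf{Step 1: distances inside a subspace agree with global distances, at least up to the relevant range.} The first thing to pin down is the relationship between the distance function of $S$ (as a subgeometry) and that of $\Gamma$. For $x,y \in S$ one always has $d_\Gamma(x,y) \leq d_S(x,y)$ since a path in $S$ is a path in $\Gamma$. For the argument I only need control in low degrees: if $x,y\in S$ with $d_\Gamma(x,y)\le \chi_0+1$, I want $d_S(x,y)=d_\Gamma(x,y)$. This should follow from the transparency-type information encoded by $\chi_0$ together with (S\ref{S2}): if $d_\Gamma(x,y)\le \chi_0+1$ then $\langle \varepsilon(x),\varepsilon(y)\rangle\subseteq|\varepsilon|$ (here I also use (B\ref{B1})/(B\ref{B2}) to replace the particular pair $a,b$ of Lemma~\ref{l1} by an arbitrary pair at the same distance), and since $\varepsilon(x),\varepsilon(y)\in\Sigma_S$ the whole line lies in $\Sigma_S\cap|\varepsilon|=|\varepsilon_S|$, so the two points are at distance $\le \chi_0+1$ in the image geometry of $\varepsilon_S$, hence within $S$. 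Conversely small $d_S$ forces small $d_\Gamma$ trivially. So on the ``short'' range the two metrics coincide.

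\textbf{Step 2: $\chi_\varepsilon \ge \chi_0$.} Fix $S\in\mathfrak S$. For any $x,y\in S$ with $d_S(x,y)\le \chi_0+1$, transparency of the image of $\varepsilon_S$ up to degree $\chi_0$ gives $\langle\varepsilon(x),\varepsilon(y)\rangle\subseteq|\varepsilon_S|\subseteq|\varepsilon|$. By Step 1, $d_S(x,y)=d_\Gamma(x,y)$, so this says: for every pair of points of $\Gamma$ that happen to lie in a common member of $\mathfrak S$ and are at $\Gamma$-distance $\le\chi_0+1$, the connecting line lies in $|\varepsilon|$. I then need to promote this to \emph{all} pairs at distance $\le\chi_0+1$; this is where (S\ref{S1}) together with the distance-transitivity (B\ref{B1}) is used — it suffices to check one representative pair at each distance $1,\dots,\chi_0+1$, and one can choose the representative inside a fixed $S$ provided $S$ contains pairs at each such distance, which holds since $\chi_0+1\le d_0$. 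Hence $\chi^\uparrow_\varepsilon\ge\chi_0$, and by (B\ref{B2}), $\chi_\varepsilon\ge\chi_0$.

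\textbf{Step 3: $\chi_\varepsilon \le \chi_0$.} For the reverse inequality I use the ``gap'' at level $\chi_0+1$ inside $S$: since $\chi_0<d_0-1$, there exist $x,y\in S$ with $d_S(x,y)=\chi_0+2$ and $\langle\varepsilon(x),\varepsilon(y)\rangle\not\subseteq|\varepsilon_S|=\Sigma_S\cap|\varepsilon|$. Because $\varepsilon(x),\varepsilon(y)\in\Sigma_S$, the line $\langle\varepsilon(x),\varepsilon(y)\rangle$ is contained in $\Sigma_S$, so $\langle\varepsilon(x),\varepsilon(y)\rangle\not\subseteq|\varepsilon_S|$ already forces $\langle\varepsilon(x),\varepsilon(y)\rangle\not\subseteq|\varepsilon|$. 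Now I must relate $d_S(x,y)=\chi_0+2$ to $d_\Gamma(x,y)$. If $d_\Gamma(x,y)=\chi_0+2$ as well, then $\chi^\downarrow_\varepsilon\le\chi_0$ directly from the definition, hence $\chi_\varepsilon\le\chi_0$ and we are done together with Step 2. If instead $d_\Gamma(x,y)<\chi_0+2$, i.e. $d_\Gamma(x,y)\le\chi_0+1$, then by Step 2 (which gives $\chi_\varepsilon\ge\chi_0$) and (B\ref{B1})/(B\ref{B2}) the line $\langle\varepsilon(x),\varepsilon(y)\rangle$ would be contained in $|\varepsilon|$, contradicting the previous sentence. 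So the case $d_\Gamma(x,y)<\chi_0+2$ cannot occur, and we always land in the first case.

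\textbf{Main obstacle.} The technical heart is Step 1 — establishing that the subgeometry metric and the ambient metric agree in the range that matters. One has to be careful that (S\ref{S2}) is exactly the hypothesis that converts ``the ambient line lies in $|\varepsilon|$'' into ``it lies in $|\varepsilon_S|$'', and that connectedness of $S$ is what makes $\varepsilon_S$ an embedding so that its opacity degrees are even defined; the possibility $d_\Gamma(x,y)<d_S(x,y)$ for pairs at larger distance is harmless but must be acknowledged and sidestepped, which is why I phrase the comparison only up to degree $\chi_0+1$ and handle the ``witness pair'' at degree $\chi_0+2$ by the contradiction argument in Step 3 rather than by a metric comparison. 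A secondary point requiring care is the reduction ``one representative pair suffices'': it relies on the orbit structure from (S\ref{S1}) and distance-transitivity (B\ref{B1}), and on the fact that a member of $\mathfrak S$, having diameter $d_0>\chi_0$, does contain pairs realising each distance up to $\chi_0+1$.
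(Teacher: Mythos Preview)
Your overall strategy matches the paper's exactly: prove $\chi_\varepsilon\le\chi_0$ and $\chi_\varepsilon\ge\chi_0$ separately by transferring between $\varepsilon$ and $\varepsilon_S$ via (S\ref{S2}), using (B\ref{B1}) to reduce to a single representative pair at each distance. Your Step 3 is in fact more carefully argued than the paper's corresponding part, and it can be streamlined: once you know $\langle\varepsilon(x),\varepsilon(y)\rangle\not\subseteq|\varepsilon|$, tightness (B\ref{B2}) gives $d_\Gamma(x,y)\ge\chi_\varepsilon+2$, and combining with $d_\Gamma(x,y)\le d_S(x,y)=\chi_0+2$ yields $\chi_\varepsilon\le\chi_0$ directly, without the case split or the appeal to Step 2.

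Your Step 1, however, does not work as written. The implication ``$d_\Gamma(x,y)\le\chi_0+1\Rightarrow\langle\varepsilon(x),\varepsilon(y)\rangle\subseteq|\varepsilon|$'' is exactly the statement $\chi^\uparrow_\varepsilon\ge\chi_0$, which is what Step 2 sets out to prove, so invoking it here is circular. And even granting that implication, the argument only yields $d_S(x,y)\le\chi_0+1$, not $d_S(x,y)=d_\Gamma(x,y)$; the concluding sentence ``so on the short range the two metrics coincide'' does not follow from what precedes it. To be fair, the paper's own proof does not address the metric comparison at all: it simply writes ``we can choose $p$ and $q$ in the same subspace $S\in\mathfrak S$'' and proceeds as if $d_S=d_\Gamma$ on $S$. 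In every application of the lemma in the paper the members of $\mathfrak S$ are \emph{convex} subspaces (they arise as convex hulls of pairs of points, cf.\ Lemma~\ref{Prel1}), so $d_S=d_\Gamma$ holds there and the issue is moot; but at the level of generality of the lemma's hypotheses, neither your argument nor the paper's closes this gap.
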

\begin{proof}
Put $\chi_0 := \chi^\uparrow_0 = \chi^\downarrow_0$ (see (B\ref{B3})). Let us firstly prove that $\chi_\varepsilon \leq \chi_0$. To this goal, we must check that, for any choice of two points $p, q\in\cP$ at distance $\chi_0+1$, the line $\langle \varepsilon(p),\varepsilon(q)\rangle$
is not fully contained in $|\varepsilon|$. Suppose the contrary: let $d(p,q) = \chi_0+1$ such that $L\: = \langle \varepsilon(p),\varepsilon(q)\rangle\subseteq |\varepsilon|$.
Since $d_0 > \chi_0+1$ by (B\ref{B3}) and, according to (B\ref{B1}), the group $\Aut(\Gamma)_\varepsilon$ acts distance-transitively on $\Gamma$, we can choose $p$ and $q$ in the same subspace $S\in \mathfrak{S}$. However $\chi_0 = \chi^\uparrow_{\varepsilon_S} = \chi^\downarrow_{\varepsilon_S}$. Hence $L \not\subseteq |\varepsilon_S|$. On the other hand, $L\subseteq |\varepsilon|$ by assumption. So, there exists a point $x\not\in S$ such that $\varepsilon(x)\in L$. This contradicts hypothesis (S\ref{S2}). The inequality $\chi_\varepsilon \leq \chi_0$ follows.

In order to prove the converse inequality $\chi_\varepsilon\geq\chi_0$ we must show that for any $d \leq \chi_0+1$ and at least one pair of points $p$ and $q$ at distance $d$, the line $\langle\varepsilon(p),\varepsilon(q)\rangle$ is contained in $|\varepsilon|$.
  By (B\ref{B3}), this holds for $p$ and $q$ in the same member of $\mathfrak{S}$. As $\Aut(\Gamma)_\varepsilon$ acts distance-transitively on $\Gamma$, the same holds for any two points at distance $d$.
\end{proof}

Lemma~\ref{cI} says that, in order to prove that an embedding $\varepsilon$ is transparent, if conditions (S\ref{S1}), (S\ref{S2}), (B\ref{B1}), (B\ref{B2}) and (B\ref{B3}) are satisfied, we may only check if the embedding $\varepsilon_S$ is transparent for $S\in\mathfrak{S}$.

\section{Grassmannians and their automorphism groups}\label{sec3.1}
\label{Sec3}
In this section we recall some information on Grassmann geometries (also called \emph{shadow geometries} in the literature) and state some terminology and notation. We only consider Grassmann geometries obtained from buildings. The reader willing to know more on this topic is referred to Tits \cite[Chapter 11]{T74} (also Scharlau \cite{Schar} and Pasini \cite[Chapter 5]{P94} for generalizations to chamber systems and diagram geometries).

Let $\Delta$ be an irreducible building of rank $n$, with $I:=\{1,2,\ldots,n\}$ as the set of types. We recall that, given a type $k \in I$, the $k$-\emph{shadow} $\sh_k(F)$ of a flag $F$ of $\Delta$ is the set of $k$-elements of $\Delta$ that are incident to $F$. Let $k^\sim$ be the set of types adjacent to $k$ in the Coxeter diagram of $\Delta$. Then the $k$-\emph{grassmannian} $\Delta_k$ of $\Delta$ (also called the $k$-\emph{shadow geometry} of $\Delta$)  is the point-line geometry whose points are the $k$-elements of $\Delta$ and whose lines are the shadows $\sh_k(F)$ for $F$ a flag of type $k^\sim$. Note that different flags of type $k^\sim$ have different $k$-shadows. So, we can freely switch from shadows to flags, whenever this is convenient.

The full automorphism group $\Aut(\Delta)$ of $\Delta$ induces a group of permutations on the type-set $I$. Let $\Aut(\Delta)_k$ be the stabilizer of the type $k$ in $\Aut(\Delta)$. Then $\Aut(\Delta)_k$ acts faithfully on $\Delta_k$ as a subgroup of $\Aut(\Delta_k)$. In all cases we are aware of, actually $\Aut(\Delta_k) = \Aut(\Delta)_k$. This is certainly the case when, as it often happens, there is a way to recover the whole of $\Delta$ from the point-line geometry $\Delta_k$, possibly modulo non-type-preserving automorphisms of $\Delta$.

In the sequel we shall
focus on buildings of Coxeter type $A_n$,  $C_n$ or $D_n$, since these are the only ones involved in our Theorems \ref{main thm 1} and \ref{main thm 2}.
We recall the corresponding diagrams here. The types are the integers that label the nodes of the diagram.

\begin{picture}(310,36)(0,0)
\put(0,10){($A_n$)}
\put(50,8){$\bullet$}
\put(53,11){\line(1,0){47}}
\put(100,8){$\bullet$}
\put(103,11){\line(1,0){47}}
\put(150,8){$\bullet$}
\put(153,11){\line(1,0){12}}
\put(168,10){$.....$}
\put(186,11){\line(1,0){12}}
\put(198,8){$\bullet$}
\put(201,11){\line(1,0){47}}
\put(248,8){$\bullet$}
\put(50,18){$1$}
\put(100,18){$2$}
\put(150,18){$3$}
\put(188,18){$n-1$}
\put(248,18){$n$}
\end{picture}

\begin{picture}(310,36)(0,0)
\put(0,10){($C_n$)}
\put(50,8){$\bullet$}
\put(53,11){\line(1,0){47}}
\put(100,8){$\bullet$}
\put(103,11){\line(1,0){12}}
\put(118,10){$.....$}
\put(136,11){\line(1,0){12}}
\put(148,8){$\bullet$}
\put(151,11){\line(1,0){47}}
\put(198,8){$\bullet$}
\put(201,10){\line(1,0){47}}
\put(201,12){\line(1,0){47}}
\put(248,8){$\bullet$}
\put(50,18){1}
\put(100,18){2}
\put(138,18){$n-2$}
\put(188,18){$n-1$}
\put(248,18){$n$}
\end{picture}

\begin{picture}(310,50)(0,0)
\put(0,23){($D_n$)}
\put(50,21){$\bullet$}
\put(53,24){\line(1,0){47}}
\put(100,21){$\bullet$}
\put(103,24){\line(1,0){12}}
\put(118,24){$.....$}
\put(136,24){\line(1,0){12}}
\put(148,21){$\bullet$}
\put(151,24){\line(1,0){47}}
\put(198,21){$\bullet$}
\put(200,23){\line(3,1){42}}
\put(200,23){\line(3,-1){42}}
\put(241,36){$\bullet$}
\put(241,8){$\bullet$}
\put(50,31){1}
\put(100,31){2}
\put(138,31){$n-3$}
\put(184,31){$n-2$}
\put(251,8){$n$}
\put(251,36){$n-1$}
\end{picture}




\medskip

\noindent
{\bf Further assumptions.} Usually, it is assumed that $n \geq 1$ in $A_n$, $n \geq 2$ in $C_n$ and $n\geq 4$ in $D_n$, but a number of authors also allow $n = 3$ in $D_n$. We will also do so, but we warn that $D_3$ is just the same as $A_3$, except for the way of matching the types with the nodes of the diagram. On the other hand, in order to avoid the rank $1$ case, which is of no interest in the context of this paper, we assume $n \geq 2$ in case $A_n$.

In cases $A_n$ and $D_n$ we also assume that $\Delta$ is \emph{thick}, namely every flag of type $I\setminus\{k\}$ (\emph{cotype} $k$, for short) is incident with at least three $k$-elements, for every type $k \in I$. In case $C_n$ we only assume that $\Delta$ is thick at every type $k < n$, namely every flag of cotype $k$ is incident with at least three $k$-elements and, for type $n$, either $\Delta$ is also thick at $n$
or $\Delta$ is \emph{thin} at $n$, namely every flag of cotype $n$ is incident with just two $n$-elements.

\subsection{Buildings of type $A_n$}\label{An}

It is well known that a thick building $\Delta$ of type $A_n$ is essentially the same object as the system of nonempty proper subspaces of an ordinary (i.e. irreducible) $n$-dimensional projective geometry $\cG$, with symmetrized inclusion as the incidence relation. The $k$-elements of $\Delta$ are the subspaces of $\cG$ of rank $k$ (projective dimension $k-1$). The $1$-grassmannian $\Delta_1$ is just the point-line system of $\cG$  while $\Delta_n$ is the point-line system of the dual $\cG^*$ of $\cG$. For $1 < k<n$ the points of $\Delta_k$ are the subspaces of $\cG$ of rank $k$ and the lines are the $k$-shadows of the flags of type $\{k-1,k+1\}$, namely the sets of the following form:
  \[ \ell_{X,Y}:=\{Z\colon  X \subset Z \subset Y, ~ \rank(Z)=k\}, ~~~  (\rank(X)=k-1, ~ \rank(Y)=k+1).\]
Henceforth, keeping the letter $\cG$ to denote $\Delta$, we shall denote $\Delta_k$ by $\cG_k$. Following a well established custom, we call $\cG_k$ a \emph{projective} $k$-\emph{grassmannian}.

The collineation group $\Aut(\cG)$ of $\cG$ is naturally isomorphic to both $\Aut(\cG_1)$ and $\Aut(\cG_n)$. By a little abuse, $\Aut(\cG) = \Aut(\cG_1) = \Aut(\cG_n)$. Let $1 < k < n$. It has been
known since long ago that we can always recover $\cG$ from $\cG_k$, except, possibly, when $n = 2k+1$ and $\cG$ admits a duality (see Chow \cite{C49}, for instance). However, in the latter case, mistaking $\cG^*$ for $\cG$ is the only error we can make. Consequently $\Aut(\cG) = \Aut(\cG_k)$ for $1 < k < n$, except when $n = 2k-1$ and $\cG$ admits dualities. In the latter case $\Aut(\cG_k)$ also contains the dualities of $\cG$.

Recall that if $n > 2$ then $\cG = \PG(V)$ for an $(n+1)$-dimensional vector space $V$. In this case the subspaces of $\cG$ of rank $k$ are the $k$-dimensional subspaces of $V$. We also recall that $\PG(V)$ admits a duality if and only if the underlying division ring of $V$ is commutative.

\begin{remark}\label{RemAn-1}
  We have distinguished between $\cG$ and $\cG_1$: the latter is a point-line geometry while $\cG$ is the poset of all subspaces of $\cG_1$. However, this distinction is seldom so relevant and many authors neglect it. For the sake of pedantry, a distinction should also be drawn between $\cG$ and $\Delta$. Indeed $\cG$ is an ordered system while $\Delta$ is not. The set of elements of $\Delta$ gets an ordering only via the types. So, while $\cG$ and its dual $\cG^*$ are different objects, no true difference exists between $\Delta$ and its `dual' $\Delta^*$, the latter being the same as $\Delta$ but with types read from right to left. Accordingly, while the automorphisms of $\cG$ are its collineations, both the collineations and the correlations of $\cG$ are taken as automorphisms of $\Delta$.
  In other words, $\Aut(\cG)$ is the group of type-preserving automorphisms of $\Delta$.
\end{remark}

\begin{remark}\label{RemAn-2}
  Only finite dimensional projective geometries have been considered here. Indeed,
  infinite dimensional projective geometries do not fit in the framework we have chosen for this section, since they are not buildings.
  However, the $k$-grassmannian $\cG_k$ of $\cG$ can also be defined when $\cG$ is infinite dimensional, for any positive integer $k$. We will freely refer to this generalization in the next subsection, when considering classical polar spaces naturally embedded in possibly infinite dimensional projective spaces.
\end{remark}

\subsection{Buildings of type $C_n$}\label{Cn}

In this case $\Delta$ is the system of nonempty singular subspaces of a (non-degenerate) polar space $\cS$ of rank $n$, with (symmetrized) inclusion as the incidence relation (See Tits \cite[Chapters 7--9]{T74}, also Buekenhout and Cohen \cite[Chapters 7--10]{BuekCohen}). Recall that a subspace of a point-line geometry
is called \emph{singular} if all of its points are pairwise collinear.

 The $k$-elements of $\Delta$ are the singular subspaces of $\cS$ of rank $k$ (projective dimension $k-1$). The $1$-grassmannian $\Delta_1$ is the point-line system of $\cS$ (which many authors regard as the same object as $\cS$ itself) while $\Delta_n$ is a \emph{dual polar space}, with the maximal and co-maximal singular subspaces of $\cS$ as points and lines respectively, the points of a co-maximal subspace being the maximal subspaces containing it. For $1 < k<n$ the points of $\Delta_k$ are the singular subspaces of $\cS$ of rank $k$ and the lines are the $k$-shadows of the flags of type $\{k-1,k+1\}$. These lines admit just the same description as the lines of a projective grassmannian, except that singular subspaces of $\cS$ are considered instead of subspaces of $\cG$.

Henceforth we write $\cS_k$ instead of $\Delta_k$ and we call $\cS_k$ a \emph{polar $k$-grassmannian}. Characterizations of polar $k$-grassmannians are known for every $k$ (see Cameron \cite{Cam} and Brouwer--Wilbrink \cite{BW} for the case $k = n$ and Hanssens \cite{H}, Hanssens--Thas \cite{HT} for the case $k < n$.). Needless to say, a way to recover $\cS$ from $\cS_k$ is implicit in each of those characterizations.
 A different way, closer to the original
approach by Chow \cite{C49} is offered by Pankov \cite[Chapter 4]{P2010}.


So, $\cS$ can be recovered from $\cS_k$, for any choice of $k$. Consequently, $\Aut(\cS_k) = \Aut(\cS)$ for any $k$. Moreover, $\Aut(\Delta) = \Aut(\cS)$ except when $n = 2$ and $\cS$ is the generalized quadrangle $W(3,\KK)$ associated to an alternating form of $V(4,\KK)$, with $\KK$ a perfect field of characteristic $2$. In that particular case, $\cS$ also admits a duality.

\begin{remark}\label{RemCn-1}
As said above, many authors refuse to distinguish between $\cS$ and $\cS_1$, while others (as Tits \cite{T74}, for instance) prefer to regard $\cS$ as the poset of singular subspaces of $\cS_1$, which in principle is not the same object as $\cS_1$. In the sequel we will be a little sloppy on this point, letting readers free to think of $\cS$ in the way they prefer.
\end{remark}

\subsubsection{Polar spaces thin at top}\label{TopThin}

According to our previous assumptions, $\Delta$ is allowed to be thin at $n$. If this is the case we say that $\cS$ is \emph{thin at top}. As proved in \cite[Chapter 7]{T74}, a polar space $\cS$ is thin at top if and only if $\cS_1$ is either a grid (when $n = 2$) or the $1$-grassmannian of a building of type $D_n$ (when $n > 2$). In particular, when $\cS$ is thin at top each of the lines of the dual polar space $\cS_n$ has exactly two points. So, $\cS_n$ can be regarded as a graph, with its points as the vertices and its lines as the edges. This graph is bipartite. As it is connected, it admits a unique bipartition. It is customary to refer to the two classes of this bipartition as the \emph{two families of maximal singular subspaces} of $\cS$.

\subsubsection{Classical polar spaces}\label{Cn-classical}

A polar space $\cS$ of rank $n$ is said to be \emph{classical} if there exists a vector space $V$ and a non-degenerate sesquilinear form $f$ or a non-singular pseudoquadratic form $q$ such that the singular subspaces of $V$ are precisely the subspaces of $V$ that are totally isotropic for $f$ or totally singular for $q$ respectively, the rank of a singular subspace being its dimension as a subspace of $V$ (See \cite{T74}, also \cite{BuekCohen}.) Assume moreover that $V$ has finite dimension $N$ and let $\cG = \PG(V)$. Recall that $N \geq 2n$ (see \cite{T74} or \cite{BuekCohen}). Accordingly, $\dim(\cG) = N-1 \geq 2n-1$. In this case, for every $k < n$ the lines of $\cS_k$ are lines of $\cG_k$. In short, $\cS_k$ is a \emph{full} subgeometry of $\cG_k$. Turning to $\cS_n$, its lines can be described as follows:
 \[ \ell_X:=\{Y\colon  X \subset Y \subset X^\perp, ~ \rank(Y)=n\}, ~~~ (\rank(X) = n-1)\]
where $X^\perp$ is the set of points of $\cS$ collinear with all points of $X$. In other words, regarded $X$ as a subspace of $V$, $X^\perp$ is the orthogonal of $X$ with respect to the form $f$ (or the sesquilinearization of $q$). In general, the lines of $\cS_n$ are not lines of $\cG_n$. They are contained in lines of $\cG_n$ if and only if $N = 2n$ since in this case $\rank(X^{\perp})=\rank(X)+2$.

In the above setting, the inclusion map of $\cS_1$ in $\cG_1$ is called a \emph{natural embedding} of $\cS_1$. The natural embedding of $\cS_1$ is unique (up to isomorphisms) except when $\cS$ is defined by an alternating form and the underlying field $\KK$ of $V$ has characteristic equal to $2$. In that case $\cS$ can also be regarded as the classical polar space associated to a quadratic form of a vector space $V'$ over the same field $\KK$ as $V$, but with $\dim(V') > \dim(V)$, possibly $\dim(V') = \infty$ (see De Bruyn and Pasini \cite{BP}).  In this case the inclusion map of $\cS_1$ in $\PG(V')$ is another natural embedding of $\cS$. However, this will not cause any ambiguity in this paper. Indeed, all polar spaces to be considered in this paper will be regarded as embedded in a given projective space, via their unique natural embedding (if this is unique) or one of their two natural embeddings, chosen in advance (when two natural embeddings exist).

As already noted at the end of Section \ref{Sec 2.1}, when $\cS$ does not arise from an alternating form of $V$, the lines of $\cG_1$ fully contained in the point-set of $\cS_1$ are just the lines of $\cS_1$, namely the natural embedding of $\cS_1$ is transparent. On  the other hand, when $\cS$ is associated to an alternating form, all points of $\cG_1$ are points of $\cS_1$, whence all lines of $\cG_1$ are fully contained in the point-set of $\cS_1$, but, of course, not all of them are lines of $\cS_1$. In this case the natural embedding of $\cS_1$ is completely opaque.

\begin{remark}\label{RemCn-2}
As proved by Tits \cite[Chapters 7--9]{T74} (see also Buekenhout and Cohen \cite{BuekCohen}), if $n \geq 4$ then $\cS$ is classical. If $n = 3$ then $\cS$ is classical but for two exceptional cases. In one of them $\cS$ is thick, with Moufang non-Desarguesian planes. The interested reader is referred to Tits \cite[Chapter 9]{T74} for a description of this family of polar spaces. In the other case $\cS$ is thin at top and we have $\cS_1 = \cG_2$ where $\cG = \PG(3,\KK)$ for a non-commutative division ring $\KK$.
\end{remark}

\begin{remark}
When $\mathrm{char}(\KK)\neq 2$ the $1$-grassmannian ${\cS}_1$ of a classical polar space $\cS$ admits just one projective embedding, namely the natural one. On the other hand, if $\mathrm{char}(\KK) = 2$ then ${\cS}_1$ can admit many non-natural embeddings, which arise from so-called generalized pseudo-quadratic forms. We refer the interested reader to Pasini \cite{Pas-EPSR} for more on these embeddings.
\end{remark}

\subsubsection{Classical polar spaces defined over a field}\label{Cn-classical-field}

Let $\cS$ be a classical polar space of rank $n$ associated to a sesquilinear form $f$ or a pseudoquadratic form $q$ of a vector space $V$. Assume moreover that the underlying division ring $\KK$ of $V$ is commutative. Then we can always assume that $\cS$ arises from an alternating or hermitian form or a quadratic form (see \cite[Chapter 8]{T74}). Accordingly, we call $\cS$ a \emph{symplectic}, \emph{hermitian} or \emph{orthogonal} polar space, thus referring to its grassmannian as a \emph{symplectic}, respectively \emph{hermitian} or \emph{orthogonal}, grassmannian.

In the rest of this subsection we recall a few well known facts on each of the above cases and fix some notation.
\begin{enumerate}[1.]
\item \emph{Symplectic polar spaces.} Let $\cS$ be symplectic. Then $\dim(V) = 2n$. As remarked in the previous subsection, all points of $\cG = \PG(V)$ are points of $\cS_1$. On the other hand, the symplectic dual polar space $\cS_n$ is a subspace of the $n$-grassmannian $\cG_n$ of $\cG$.

Following a well established custom, we denote $\cS_1$ by the symbol $\mathrm{W}(2n-1,\KK)$ and $\cS_n$ by $\mathrm{DW}(2n-1,\KK)$.

\item \emph{Hermitian polar spaces.}\label{Cn-par2} Let $\cS$ be hermitian. Denoted by $\sigma$ the involutory automorphism of $\KK$ associated to the hermitian form defining $\cS$, the field $\KK$ is a separable quadratic extension of the subfield $\KK_\sigma < \KK$ fixed by $\sigma$. If $L$ is a line of $\cG = \PG(V)$ not contained in the point-set $\cP$ of $\cS_1$, then either $|L\cap\cP| \leq 1$ or $L\cap\cP$ is a Baer subline of $L$, defined over $\KK_\sigma$.
  In general,
  no upper bound can be stated in the Hermitian case
  for $N-2n$, where $N=\dim(V)\geq 2n$.
  However, in this paper we shall mainly focus on the case when $\dim(V)$ is as small as possible, namely $\dim(V) = 2n$. Thus,
  let $\dim(V) = 2n$. Then the lines of $\cS_n$ are Baer sublines of those of $\cG_n$. Actually, $\cS_n$ is a full subgeometry of a Baer subgeometry of $\cG_n$ defined over $\KK_\sigma$. We will turn back to this point in Section~\ref{sec3.2}.

As customary, when $\dim(V) = 2n$ we denote $\cS_1$ and $\cS_n$ by $\mathrm{H}(2n-1,\KK)$ and $\mathrm{DH}(2n-1,\KK)$ respectively.\\

\item\label{Cn-par3}
 \emph{Orthogonal polar spaces.}  When $\cS$ is orthogonal,
 no upper bound can be stated for
 $\dim(V)-2n$ in general,  as in the Hermitian case. We shall only consider the following three cases, where $N = \dim(V)$ takes its three smallest values, namely $2n$, $2n+1$ and $2n+2$. (Note that these are the only possible values for $N$ when $\KK$ is quasi-quadratically closed.)
\begin{enumerate}[a)]
\item $N = 2n$. In this case $\cS$ is thin at top (see Subsection~\ref{TopThin}). The quadratic form $q$ associated to $\cS$ can always be given the following canonical form (modulo a multiplicative factor and the choice of a suitable basis of $V$, of course):
\begin{equation}\label{form O+}
q(x_1,\ldots, x_{2n}) ~ = ~ \sum_{i=1}^nx_{2i-1}x_{2i}.
\end{equation}
If $n = 2$ then $\cS$ is a grid. Leaving this case aside, let $n > 2$.
 If either $n=3$ or $n>4$, then $\Aut(\cS) = \Aut(\mathrm{O}^+(2n,\KK))$.
By $\mathrm{O}^+(2n,\KK)$ we denote the group of all linear
mappings of $V(2n,\KK)$ preserving the quadratic form $q$ as in~\eqref{form O+}.
 If $n = 4$ then $\Aut(\cS)$ has index $3$ in $\Aut(\mathrm{O}^+(2n,\KK))$. In any case, $\mathrm{Inn}(\mathrm{O}^+(2n,\KK)) \leq \Aut(\cS)$. We say that $\cS$ is of type $\mathrm{O}^+(2n,\KK)$ and we denote it by the symbol $\mathrm{Q}^+(2n-1,\KK)$.

\item  $N = 2n+1$. In this case $q$ can be given the following canonical form:
\begin{equation}\label{form O}
q(x_1,\ldots, x_{2n}, x_{2n+1}) ~ = ~ \sum_{i=1}^nx_{2i-1}x_{2i} + x_{2n+1}^2.
\end{equation}
We have $\Aut(\cS) = \Aut(\mathrm{O}(2n+1,\KK))$. We say that $\cS$ is of type $\mathrm{O}(2n+1,\KK)$, also adopting the symbols $\mathrm{Q}(2n,\KK)$ and $\mathrm{DQ}(2n,\KK)$ to denote $\cS_1$ and $\cS_n$ respectively.

Note that the lines of $\cS_n$ appear as conics inside certain projective subspaces of the $n$-grassmannian $\cG_n$ of $\cG = \PG(V)$. As we shall see in Section~\ref{DPP},
the dual polar space $\cS_n$ can also be realized as a subgeometry of the half-spin geometry $\mathrm{HS}(2n+1,\KK)$.

\item\label{Cn-par3c} $N = 2n+2$. Then $q$ can be given the following canonical form, for suitable scalars $\lambda, \mu$, where $\lambda$ and $\mu$ are such that the polynomial $t^2 + \lambda t + \mu$ is irreducible over $\KK$ (hence $\KK$ is not quadratically closed):
\begin{equation}\label{form O-}
q(x_1,\ldots, x_{2n}, x_{2n+1}, x_{2n+2}) ~ = ~ \sum_{i=1}^nx_{2i-1}x_{2i} + x_{2n+1}^2 +\lambda x_{2n+1}x_{2n+2} + \mu x^2_{2n+2}.
\end{equation}
We now have $\Aut(\cS) = \Aut(\mathrm{O}^-(2n+2,\KK))$. We say that $\cS$ is of type $\mathrm{O}^-(2n+2,\KK)$. We use the symbols $\mathrm{Q}^-(2n+1,\KK)$ and $\mathrm{DQ}^-(2n+1,\KK)$ to denote $\cS_1$ and $\cS_n$ respectively.

The lines of $\cS_n$ are elliptic quadrics of projective subspaces of the $n$-grassmannian of $\PG(V)$. As we shall see in Section~\ref{DPP}, the dual polar space $\cS_n$ can also be realized as a subgeometry of the half-spin geometry $\mathrm{HS}(2n+1,\KK(\rho))$ defined over the extension $\KK(\rho)$ of $\KK$, where $\rho$ is any of the roots of the polynomial $t^2 + \lambda t + \mu$.
\end{enumerate}
\end{enumerate}

\subsubsection{Quads of dual polar spaces}\label{quads}

Recall that, given a polar space $\cS$ of rank $n$, the set $\sh_n(X)$ of maximal singular subspaces of $\cS$ containing a given singular subspace $X$ is a subspace of the point-line geometry $\cS_n$. When $\rank(X) = n-2$ the subspace $\sh_n(X)$, regarded as a subgeometry of $\cS_n$, is called a \emph{quad}.

The quads of the dual polar space $\mathrm{DW}(2n-1,\KK)$ are isomorphic to $\mathrm{Q}(4,\KK)$. Conversely, those of $\mathrm{DQ}(2n,\KK)$ are isomorphic to $\mathrm{W}(3,\KK)$. The quads of $\mathrm{DH}(2n-1,\KK)$ are isomorphic to $\mathrm{Q}^-(5,\KK_\sigma)$ while those of $\mathrm{DQ}^-(2n+1,\KK)$ are isomorphic to $\mathrm{H}(3,\KK(\rho))$, with $\KK(\rho)$ as at the end of paragraph~\ref{Cn-par3}.c
of Subsection~\ref{Cn-classical-field}. Finally, the quads of $\mathrm{DQ}^+(2n-1,\KK)$ are dual grids.

\subsection{Buildings of type $D_n$}\label{Dn}

A building $\Delta$ of type $D_n$ is obtained as follows from a polar space $\cS$ of rank $n \geq 3$ thin at top (Tits \cite[Chapter 7]{T74}).
Drop the singular subspaces of $\cS$ of rank $n-1$. The elements of $\Delta$ of type $k < n-1$ are the singular subspaces of $\cS$ of rank $k$. The members of one of the two families of maximal singular subspaces of $\cS$ are taken as elements of $\Delta$ type $n$ and those in the other family as elements of type $n-1$. The incidence relation is the same as in $\cS$, namely inclusion, except when two elements of type $n$ and $n-1$ are involved. In the latter case, two elements $X$ and $Y$ of $\Delta$ of types $n$ and $n-1$ respectively are declared to be incident in $\Delta$ when $X\cap Y$ has rank $n-1$ as a singular subspace of $\cS$.

\subsubsection{Half-spin geometries}\label{Dn n>3}

Let $n > 3$. Then the polar space $\cS$ is of type $\mathrm{O}^+(2n,\KK)$ for some field $\KK$ (Tits \cite[Chapter 7]{T74}). Accordingly, $\Delta$ is said to be \emph{defined over $\KK$} and denoted by the symbol $D_n(\KK)$.

For $k < n-1$, the $k$-grassmannian $\Delta_k$ is just the same as $\cS_k$. In particular, $\cS_1 = \Delta_1$. Since we can recover $\cS$ from $\cS_k$, we can also recover $\Delta$ from $\Delta_k$.

The grassmannians $\Delta_n$ and $\Delta_{n-1}$ have the elements of $\Delta$ of type $n$ or $n-1$ respectively as points and those of type $n-2$ as lines. Characterizations of $\Delta_n$ (equivalently, $\Delta_{n-1}$) are also known (Cooperstein \cite{Coop83}, Shult \cite{S94}). Hence $\Delta$ can also be recovered from either $\Delta_n$ or $\Delta_{n-1}$. Consequently, $\Aut(\Delta)_k = \Aut(\Delta_k)$ for any choice of $k$.

The group $\Aut(\cS) = \Aut(\Delta)_1$ contains elements that swap the two families of maximal singular subspaces of $\cS$, namely permute the types $n$ and $n-1$ of $\Delta$. Hence the grassmannians $\Delta_n$ and $\Delta_{n-1}$ are isomorphic. They are called \emph{half-spin geometries} and denoted by the symbol $\mathrm{HS}(2n-1,\KK)$.

We have $\Aut(\Delta) = \Aut(O^+(2n, \KK))$.
 If $n \neq 4$ then $\Aut(\Delta) = \Aut(\Delta)_k = \Aut(\Delta)_{1,\ldots, n-2}$ for any $k \leq n-2$ while $\Aut(\Delta)_n = \Aut(\Delta)_{n-1}$ is the group of type-preserving automorphisms of $\Delta$. On the other hand, when $n = 4$ the group $\Aut(\Delta)$ also contains elements (called trialities) that cyclically permute the types $1, 3$ and $4$. In this case $\Aut(\Delta)_2 = \Aut(\Delta)$ while each of the groups $\Aut(\Delta)_k$ with $k\in \{1, 3, 4\}$ has index $3$ in $\Aut(\Delta)$.

Turning back to half-spin geometries, suppose firstly that $n > 4$ and consider $\Delta_n$, to fix ideas. The lines of $\Delta_n$ are the $n$-shadows of the elements of $\Delta$ of type $n-2$. The geometry $\Delta_n$ contains two families of maximal projective subspaces, of projective dimension $3$ and $n-1$ respectively. The subspaces of the first family are the $n$-shadows of the elements of $\Delta$ of type $n-3$ while those of the latter are the $n$-shadows of the elements of type $n-1$. Every subspace of the second family contains $3$-subspaces, which correspond to flags of $\Delta$ of type $\{n-4,n-1\}$. We shall call the $3$-subspaces
of the first family \emph{projective $3$-spaces of type $+$} and those of the latter family, corresponding to $\{n-4,n-1\}$-flags, \emph{projective $3$-spaces of type $-$}. Every projective plane of $\Delta_n$ is contained in a number of $3$-subspaces of type $+$ and just one $3$-subspace of type $-$.

The same definitions can be given when $n = 4$, but in this case the projective $3$-spaces of type $-$ are maximal projective subspaces of $\Delta_4$ and the two families of $3$-spaces are permuted by those elements of $\Aut(\Delta_4) = \Aut(\Delta)_4$ which swap the types $1$ and $3$.

\subsubsection{The case $n = 3$}\label{Dn n=3}

Let $n = 3$. A building $\Delta$ of type $D_3$ is the same as a $3$-dimensional projective geometry $\cG$, except that the points, the lines and the planes of $\cG$ are given the types $2, 1$ and $3$ respectively (or $3, 1$ and $2$) instead of $1, 2$ and $3$. Assume that the points of $\cG$ get type $2$ in $\Delta$, to fix ideas. Then the half-spin geometries $\Delta_2$ and $\Delta_3$ are the same as $\cG$ and $\cG^*$ respectively while the polar space $\Delta_1$ is the line-grassmannian $\cG_2$ of $\cG$.

When the underlying division ring $\KK$ of $\cG$ is commutative we denote $\Delta$ by the symbol $D_3(\KK)$.

\subsection{From $D_{n+1}(\KK)$ to $\mathrm{Q}(2n, \KK)$ and $\mathrm{Q}^-(2n+1,\KK_\sigma)$}
 \label{DPP}

It is well known that the polar space $\mathrm{Q}(2n,\KK)$ can be realized as a substructure of the building $D_{n+1}(\KK)$ of type $D_{n+1}$ over $\KK$. When $\KK$ admits an involutory automorphism $\sigma$ we can also obtain $\mathrm{Q}^-(2n+1,\KK_\sigma)$ as a substructure of $D_{n+1}(\KK)$.

\subsubsection{From $D_{n+1}(\KK)$ to $\mathrm{Q}(2n,\KK)$}\label{DPP1}

Let $\Delta = D_{n+1}(\KK)$ be a building of type $D_{n+1}$ over the field $\KK$. Recall that $\Delta_1 = \mathrm{Q}^+(2n+1,\KK)$. Let $V = V(2n+2,\KK)$ and let $H$ be a hyperplane of $\PG(V)$. Assume that $H$ is \emph{non-degenerate}, namely that the form induced by $q$ on $H$  is non-degenerate.
Then $\Delta_1$ induces a polar space $\cS \cong \mathrm{Q}(2n,\KK)$ on $H$.

Explicitly, the singular subspaces of $\cS$ are the singular subspaces of the polar space $\Delta_1$ which are fully contained in $H$. In particular, those of rank $n$ bijectively correspond to the flags $\{X,X'\}$ of $\Delta$ of type $\{n, n+1\}$ such that $X\cap X' \subseteq H$. On the other hand, these flags bijectively correspond to the elements of $\Delta$ of type $n+1$, namely the points of the half-spin geometry $\Delta_{n+1}$. Indeed, $\dim(X\cap H) = n-1$ for every $(n+1)$-element $X$ of $\Delta$ (since $H$ is non-singular by assumption). Hence there exists a unique $n$-element $X'$ of $\Delta$ such that $X\cap X' = X\cap H$.

By the above statements, we can take the elements of $\Delta$ of type $n+1$ as points of the dual polar space $\cS_n$. The lines of the dual polar space $\cS_n$ are singular subspaces of $\cS$ of rank $n-1$, hence they are lines of the spin geometry $\Delta_{n+1}$. Thus, $\cS_n$ is a full subgeometry (but not a subspace) of $\Delta_{n+1}$, with just the same points as $\Delta_{n+1}$, but only a few of the lines of $\Delta_{n+1}$ are lines of $\cS_n$. Indeed, an $(n-1)$-element $X$ of $\Delta$ (namely a line of $\Delta_{n+1}$) is a line of $\cS_n$ if and only if $X\subseteq H$. When $X\not\subseteq H$, the $(n-2)$-element $Y := X\cap H$ of $\Delta$ is a singular subspace of $\cS$ of rank $n-2$. Hence it defines a quad of $\cS_n$, say $Q(Y)$. As noticed in Subsection \ref{quads}, we have $Q(Y) \cong \mathrm{W}(3,\KK)$. On the other hand $Y$ defines a projective $3$-space $S(Y)$ of $\Delta_{n+1}$ of type $+$ (see Subsection~\ref{Dn n>3}). The generalized quadrangle $Q(Y)$ is naturally embedded as $\mathrm{W}(3,\KK)$ in $S(Y)$. The line $X$ of $\Delta_{n+1}$ is a line of $S(Y)$. It appears as a hyperbolic line in $Q(Y)$.

\subsubsection{From $D_{n+1}(\KK)$ to $\mathrm{Q}^-(2n+1,\KK_\sigma)$}\label{DPP2}

Our exposition of this construction is a rephrasing of Carter \cite[Theorem 14.5.2]{Carter}.

Let $\Delta := D_{n+1}(\KK)$ as in the previous subsection, but now we assume that $\KK$ admits a non-trivial involutory automorphism $\sigma$. Let $\KK_0 := \KK_\sigma$ be the subfield of $\KK$ fixed by $\sigma$. Thus $\KK$ is a separable quadratic extension of $\KK_0$, say $\KK = \KK_0(\rho)$ for $\rho\in\KK\setminus\KK_0$. The following is the minimal polynomial of $\rho$:
\[P_\rho(t): ~ = ~ t^2 + \lambda t + \mu, \qquad  (\lambda := -\rho-\rho^\sigma\in \KK_0, ~ \mu := \rho\rho^\sigma \in \KK_0).\]
Let $V = V(2n+2,\KK)$. We can assume that $\Delta_1$ is associated to the quadratic form $q$ of $V$ expressed as follows with respect to a suitable basis  $U = (u_i)_{i=1}^{2n+2}$ of $V$:
\[q(x_1,\ldots, x_{2n+2}) ~ := ~ x_1x_2 + x_3x_4 +\cdots + x_{2n-1}x_{2n} + x_{2n+1}x_{2n+2}.\]
(Compare Subsection \ref{Cn-classical-field}, equation \eqref{form O+}.) Let $\delta$ be the semilinear transformation of $V$ acting as follows:
\[\delta: (x_1, x_2,\ldots, x_{2n}, x_{2n+1}, x_{2n+2}) \mapsto  (x_1^\sigma, x_2^\sigma, \ldots, x_{2n}^\sigma, x_{2n+2}^\sigma, x_{2n+1}^\sigma).\]
Then $\delta$ is an involutory automorphism of $\Delta$ fixing all types $k < n$ and permuting $n$ with $n+1$. In other words, $\delta$ is an automorphism of the polar space $\Delta_1$ swapping the two families of maximal singular subspaces of $\Delta_1$.

Introduce now the following basis of  $V$:
\[U_0 ~:= ~  (u_1, \ldots, u_{2n}, u_{2n+1}+u_{2n+2}, -\rho u_{2n+1} - \rho^\sigma u_{2n+2}).\]
It is easy to see that the points of $\Delta_1$ fixed by $\delta$ are precisely the points of $\Delta_1$ represented by $\KK_0$-linear combinations of vectors of $U_0$. They form a Baer subgeometry $\PG(V_0)$ of $\PG(V)$, where $V_0$ is the $\KK_0$-vector space spanned by $U_0$. Note that the form $q_0$ induced by $q$ on $V_0$ admits the following expression with respect to $U_0$:
\begin{equation}\label{form O- n+1}
q_0(y_1,\ldots, y_{2n+2}) ~ := ~ y_1y_2 + y_3y_4 +\cdots + y_{2n-1}y_{2n} + y_{2n+1}^2 + \lambda y_{2n+1}y_{2n+2} + \mu y_{2n+2}^2
\end{equation}
where $y_1,\ldots, y_{2n+2}$ are coordinates with respect to $U_0$. Comparing (\ref{form O- n+1}) with (\ref{form O-}) of Subsection \ref{Cn-classical-field}, we immediately see that $q_0$ defines in $\PG(V_0)$ an orthogonal polar space $\cS$ of type $\mathrm{O}^-(2n+2,\KK_0)$. A singular subspace $X$ of $\Delta_1$ of rank $k$ is stabilized by $\delta$ if and only if $X\cap\PG(V_0)$ is a singular subspace of $\cS$ of rank $k$. Thus, $\cS$ is canonically isomorphic to the poset $\Delta_{1,\delta}$ of singular subspaces of $\Delta_1$ stabilized by $\delta$. With a harmless abuse, we may assume that $\cS$ and $\Delta_{1,\delta}$ are the same objects.

The dual polar space $\cS_n$ can be embedded in the spin geometry $\Delta_{n+1}$ in a natural way. Indeed the points of $\cS_n$, regarded as maximal elements of $\Delta_{1,\delta}$, bijectively correspond to the flags $\{X,X'\}$ of $\Delta$ of type $\{n,n+1\}$ stabilized by $\delta$, namely such that $X' = X^\delta$. An injective mapping $\eta$ can be defined from the set of $\{n,n+1\}$-flags of $\Delta$ stabilized by $\delta$ to the set of points of $\Delta_{n+1}$ by declaring that, for such a flag $F$, $\eta(F)$ is the $(n+1)$-element of $F$. Thus, we have defined an injective mapping from the set of points of $\cS_n$ to the set of points of $\Delta_{n+1}$. By a little abuse, we also denote the latter mapping by the symbol $\eta$. Denoted by $\cP_n$ the point-set of $\cS_n$, put $\cP_\delta := \eta(\cP_n)$. It is immediate to see that an $(n-1)$-element $X$ of $\Delta$ is stabilized by $\delta$ if and only if, regarded as a line of $\Delta_{n+1}$, it is fully contained in $\cP_\delta$. Thus, $\eta$ is indeed an isomorphism of point-line geometries from $\cS_n$ to $\cP_\delta$, the latter being equipped with the lines of $\Delta_{n+1}$ fully contained in it.

Given an $(n-2)$-element $X$ of $\Delta$ stabilized by $\delta$, namely a singular subspace of $\cS$ of rank $n-2$, let $Q(X)$ be the quad defined by $X$ in $\cS_n$ and $S(X)$ the 3-subspace of type $+$ defined by $X$ in $\Delta_{n+1}$. Then $\eta$ induces a natural embedding of $Q(X) \cong \mathrm{H}(3,\KK)$ in $S(X)\cong \PG(3,\KK)$.

\section{Pl\"{u}cker and spin embeddings}\label{sec3.2}
\label{Sec4p}
\subsection{Pl\"{u}cker embeddings}\label{3.2 Pluecker}

Let $\cG = \PG(V)$ for a vector space $V$ of finite dimension $N$ over a field $\KK$. For $1\leq k < N$, let $\cG_k$ be the $k$-grassmannian of $\cG$.

The Pl\"ucker (or Grassmann) embedding of $\cG_k$ is the map
$\varepsilon^\cG_k:\cG_k\to\PG(\bigwedge^k V)$ sending
every $k$-dimensional subspace $\langle v_1,\ldots,v_k\rangle$ of $V$
to the point $\langle v_1\wedge\cdots\wedge v_k\rangle$ of $\PG(\bigwedge^k V)$.
It is well-known that $\varepsilon^\cG_k$ is a full projective embedding
of $\cG_k$ and its support $|\varepsilon^\cG_k|$ is an algebraic set. In fact $|\varepsilon_k^\cG|$ can be obtained as the intersection of a number of quadrics of $\PG(\bigwedge^kV)$, namely it is described by a set of homogeneous equations of degree 2. In the literature the set $|\varepsilon^\cG_k|$ is called the \emph{$k$-Grassmann variety} of $\cG$.

It is also well known that $\Aut(\cG_k)_{\varepsilon^\cG_k} = \Aut(\cG_k)$, namely $\varepsilon^\cG_k$ is homogeneous. As remarked in Section~\ref{An}, the  group $\Aut(\cG_k)$ either coincides with $\PgL(V)$ or it contains $\PgL(V)$ as a subgroup of index $2$, the latter being the case if and only if $N = 2k$.

Let $\cS$ be a classical polar space of rank $n \leq N/2$, naturally embedded in $\PG(V)$. Let $k\leq n$. As said in Subsection~\ref{Cn-classical}, the point-set $\cP_k$ of the $k$-grassmannian $\cS_k$ of $\cS$ is a subset of the set of points of $\cG_k$. Denoted by $\iota_k^{\cS}$ the inclusion map of $\cP_k$ in $\cG_k$, let $\varepsilon_k^\cS := \varepsilon^\cG_k\cdot\iota_k^\cS$ and put $|\varepsilon^\cS_k| = \varepsilon^\cS_k(\cP_k)$. The map
$\varepsilon^\cS_k$, with the subspace $\langle |\varepsilon^\cS_k|\rangle$ of $\PG(\bigwedge^kV)$ as its codomain, will be called the \emph{Pl\"ucker map} (also \emph{Grassmann map}) of $\cS_k$. The set $|\varepsilon_k^\cS|$ can be described by adding one single tensor equation to the equations that define $|\varepsilon^\cG_k|$ (Pasini \cite{P16}). We call it the $k$-\emph{Grassmann variety} of $\cS$.

As recalled in Subsection \ref{Cn-classical}, if either $k < n$ or $k = n$, $N = 2n$ and $\cS$ is a symplectic polar space, then $\cS_k$ is a full subgeometry of $\cG_k$, namely every line of $\cS_k$ is a line of $\cG_k$. In this case $\varepsilon^{\cS}_k$ is a full projective embedding of $\cS_k$ in $\langle|\varepsilon^{\cS}_k|\rangle$. We call it the  \emph{Pl\"ucker embedding} of $\cS_k$. The group $\Aut(\cS_k) \cong \Aut(\cS)$ is a subgroup of $\Aut(\cG_k)$. Therefore, since $\varepsilon^\cG_k$ is homogeneous, the embedding $\varepsilon_k^\cS$ is also homogeneous.

When $k = n$ and $\cS$ is not of symplectic type, the dual polar space $\cS_n$ is not a full subgeometry of $\cG_n$. In fact, if $N > 2n$ then $\cS_n$ is not even a subgeometry of $\cG_n$. In this case $\varepsilon^\cS_n$ is not a projective embedding.

Finally, let $\cS_n = \mathrm{DH}(2n-1,\KK)$ (Subsection~\ref{Cn-classical-field}, paragraph~\ref{Cn-par2}). Then the lines of $\cS_n$ are proper sublines of lines of $\cG_n$. So, $\cS_n$ is a subgeometry of $\cG_n$, but not a full subgeometry. However, recalling that the quads of $\cS_n$ are isomorphic to $\mathrm{Q}^-(5,\KK_\sigma)$ (where $\KK_\sigma$ is as in Subsection \ref{Cn-classical-field}, paragraph~\ref{Cn-par3}, (b)), it is easy to see that a line of $\cG_n$ contains a line of $\cS_n$ if and only if it contains at least three points of $\cS_n$. It is also known (see e.g. De Bruyn \cite{B1}) that a basis $U$ can be chosen in $\bigwedge^nV$ in such a way that $\langle u\rangle\in |\varepsilon_n^\cS|$ for every $u\in U$ and, if $W$ is the $\KK_\sigma$-span of $U$ in $\bigwedge^nV$, then $|\varepsilon^\cS_n|$ is contained in $\PG(W)$ (hence it spans $\PG(W)$). Moreover, $\varepsilon^\cS_n$ maps every line of $\cS_n$ surjectively onto a line of $\PG(W)$. It follows that $\varepsilon^\cS_n$ is a full projective embedding of $\cS_n$ in $\PG(W)$, henceforth called the \emph{Pl\"{u}cker embedding} of $\cS_n$.

With $W$ as above, let $\cG_n^\circ := (\varepsilon^{\cG}_n)^{-1}(\PG(W))$. Then $\cG^\circ_n$ is the $n$-grassmannian of a Baer subgeometry $\cG^\circ$ of $\cG$ defined over $\KK_\sigma$ and the restriction of $\varepsilon_n^\cG$ to $\cG^\circ_n$, truncated to $\PG(W)$, is the Pl\"{u}cker embedding of $\cG^\circ_n$. Clearly, $\cS_n$ is a full subgeometry of $\cG^\circ_n$ (the lines of $\cS_n$ are lines of $\cG^\circ_n$). Moreover, since a line of $\cG_n$ contains a line of $\cS_n$ if and only if it contains at least three points of $\cS_n$, a line of $\cG^\circ_n$ is a line of $\cS_n$ if and only if it contains at least three points of $\cS_n$, if and only if it is fully contained in $\cS_n$. It is also clear that $\Aut(\cS_n) \leq \Aut(\cG^\circ_n)$. As the Pl\"{u}cker embedding of $\cG^\circ_n$ is homogeneous, the embedding $\varepsilon^\cS_n:\cS_n\rightarrow\PG(W)$ is also homogeneous.

\begin{remark}
Recall that $\dim(\bigwedge^kV) = {N\choose k}$. The codimension $c_k := \mathrm{cod}(\langle|\varepsilon^\cS_k|\rangle)$ of $\langle|\varepsilon^\cS_k|\rangle$ in $\PG(\bigwedge^kV)$ is known in a few cases. For instance, it is well known that if $\cS \cong \mathrm{W}(2n-1,\KK)$ then $c_k = {N\choose{k-2}}$ ($= 0$ when $k = 1$). If $\cS \cong \mathrm{Q}(2n,\KK)$ or $\cS \cong \mathrm{Q}^+(2n-1,\KK)$ and $\mathrm{char}(\KK) \neq 2$ then $c_k = 0$ while if $\mathrm{char}(\KK) = 2$ then $c_k = {N\choose{k-2}}$ (Cardinali and Pasini \cite{CP2}). If $\cS = \mathrm{H}(2n-1,\KK)$ then $c_k = 0$ (Blok and Cooperstein \cite[Theorem 3.1]{BC11}).
\end{remark}

\subsection{Spin embeddings}\label{3.2 Spin}

Let $\Delta = D_n(\KK)$ be the building of type $D_n$ over $\KK$ and $V = V(2^{n-1},\KK)$. The half-spin geometry $\Delta_n = \mathrm{HS}(2n-1,\KK)$ admits a homogeneous full projective embedding $\varepsilon^+_\spin:\Delta_n\rightarrow\PG(V)$ (Chevalley \cite{Chevalley}, also Buekenhout and Cameron \cite{BC}), called the \emph{spin embedding} of $\Delta_n$. The support $|\varepsilon^+_{\spin}|$ of $\varepsilon^+_{\spin}$ is an algebraic set, defined by quadratic equations if $\ch(\KK)\not= 2$ (see~Chevalley \cite[\S III.8.2]{Chevalley}, also Lichtenstein \cite{Licht}, Manivel \cite{Mani}).
We call it the \emph{spinor variety} of $\PG(V)$.

In Subsections~\ref{DPP1} and \ref{DPP2} we have shown how to embed $\mathrm{DQ}(2n,\KK)$ and $\mathrm{DQ}^-(2n+1,\KK_\sigma)$ in $\Delta_{n+1} = \mathrm{HS}(2n+1,\KK)$, where $\Delta$ now stands for $D_{n+1}(\KK)$. Projective embeddings of the dual polar spaces $\mathrm{DQ}(2n+1,\KK)$ and $\mathrm{DQ}^-(2n+1,\KK_\sigma)$ in $\PG(2^n-1,\KK)$ immediately arise from that.

Explicitly, let $\varepsilon^+_\spin$ be the spin embedding of $\Delta_{n+1}$ in $\PG(V)$, $V = V(2^n,\KK)$. Let $\cS = \mathrm{Q}(2n,\KK)$ and let $\iota:\cS_n\rightarrow\Delta_{n+1}$ be the embedding of $\cS_n = \mathrm{DQ}(2n,\KK)$ in $\Delta_{n+1}$ described in Subsection~\ref{DPP1}. Then $\varepsilon_\spin := \varepsilon^+_\spin\cdot \iota$ is a full projective embedding of $\cS_n$ in $\PG(V)$, called the \emph{spin embedding} of $\cS_n$.

As shown in Subsection~\ref{DPP1}, the point-set of $\cS_n$ coincides with the point-set of $\Delta_{n+1}$. Hence the embeddings $\varepsilon^+_\spin$ and $\varepsilon_\spin$ have the same support: $|\varepsilon_\spin| = |\varepsilon_\spin^+|$.

Let $\Aut(\Delta)_{1,\ldots, n+1}$ be the group of type-preserving automorphisms of $\Delta$, regarded as a subgroup of $\Aut(\Delta_1) = \Aut(\Delta)_1$. Then $\Aut(\cS_n) = \Aut(\cS)$ is the stabilizer in $\Aut(\Delta)_{1,\ldots, n+1}$ of the hyperplane $H$ used to construct $\cS$ as a substructure of $\Delta_1$ (see Subsection~\ref{DPP1}). Accordingly, $\Aut(\cS_n)$ can be regarded as a subgroup of $\Aut(\Delta_{n+1})$. Since $\varepsilon^+_\spin$ is homogeneous, the embedding $\varepsilon_\spin$ is homogeneous.

Let now $\cS = \mathrm{Q}^-(2n+1,\KK_\sigma)$, as in Subsection~\ref{DPP2}. Let $\iota^-:\cS_n\rightarrow\Delta_{n+1}$ be the embedding of $\cS_n = \mathrm{DQ}^-(2n+1,\KK_\sigma)$ in $\Delta_{n+1}$
described in Subsection~\ref{DPP2}. Put $\varepsilon^-_\spin := \varepsilon^+_\spin\cdot \iota^-$. The support $|\varepsilon^-_\spin|$ of $\varepsilon^-_\spin$ spans $\PG(V)$ (see e. g. Cooperstein and Shult \cite[2.2]{CS}). Hence $\varepsilon^-_\spin$ is a full projective embedding of $\cS_n$ in $\PG(V)$. It is called the \emph{spin embedding} of $\cS_n$.

The group $\Aut(\cS_n) = \Aut(\cS)$ is the centralizer in $\Aut(\Delta)_{1,n,n+1}$ of the automorphism $\delta$ of $\Delta$ used to construct $\cS$ as a substructure of $\Delta_1$ (see Subsection~\ref{DPP2}). Hence $\Aut(\cS_n)$ is a subgroup of $\Aut(\Delta_{n+1})$. Since $\varepsilon^+_\spin$ is homogeneous, the embedding $\varepsilon^-_\spin$ is also homogeneous.

\begin{remark}
The Pl\"{u}cker embedding $\varepsilon_n^\cS$ of $\cS_n = \mathrm{DQ}(2n,\KK)$ can be also obtained as the composition of $\varepsilon_\spin$ with a rational transformation, constructed by composing the veronesean embedding of $\PG(2^n-1,\KK)$ in the Veronese variety $\cal V$ of $\PG({{2^n+1}\choose 2}-1,\KK)$ with a linear projection of $\cal V$ into the codomain of $\varepsilon_n^\cG$ (Cardinali and Pasini \cite{CP1}). This rational transformation induces a bijection from $|\varepsilon_\spin|$ to $|\varepsilon_n^\cS|$. The set $|\varepsilon_n^\cS|$ is algebraic (Pasini \cite{P16}). A description of $|\varepsilon_\spin| = |\varepsilon_\spin^+|$ as an algebraic set is implicit in this construction too.
\end{remark}

\section{Proof of Theorems \ref{main thm 1} and \ref{main thm 2}}
\label{Sec5p}
\subsection{Preliminaries}\label{Prel}

At the beginning of Section~\ref{Sec3} we recalled a few notions from the theory of buildings. That was enough to introduce grassmannians, but now we need a bit more.

Let $\Delta$ be a thick building of irreducible spherical type over the set of types $I = \{1,2,\ldots, n\}$. Let ${\cal D}(\Delta)$ be the diagram of $\Delta$. Regarded ${\cal D}(\Delta)$ as a graph, a subset of $I$ is said to be \emph{connected} if it is connected as a set of vertices of ${\cal D}(\Delta)$. For $J\subseteq I$ we denote by $J^\sim$ the set of the types $i\in I\setminus J$ which are adjacent in ${\cal D}(\Delta)$ with at least one $j\in J$.

Before continuing, we must better explain what a building is for us. Indeed different equivalent definitions of buildings exist in the literature. According to the earliest one (Tits \cite{T74}), a building is a simplicial complex admitting a family of thin subcomplexes (called apartments) satisfying certain properties (axioms (B2), (B3) and (B4) of \cite[Chapter 3]{T74}). In this perspective, a building can also be regarded as a diagram geometry, with the vertices of the complex as elements and the faces as flags, the maximal simplices (called chambers) being the maximal flags. This diagram-geometric setting is the one we are adopting in this paper.

That being stated, we can turn to residues. Let $\tau$ be the type-function of $\Delta$. Given a non-maximal flag $F$ of $\Delta$, the residue $\Res_\Delta(F)$ of $F$ in $\Delta$ consists of the elements of $\Delta$ of type $j\not\in\tau(F)$ which are incident with $F$, with the incidence relation inherited from $\Delta$. We call $J := I\setminus\tau(F)$ the \emph{type} of $\Res_\Delta(F)$, also saying that $\Res_\Delta(F)$ is a $J$-\emph{residue} of $\Delta$. Note that $F = \emptyset$ is allowed, but $\Res_\Delta(\emptyset) = \Delta$.

Residues are thick buildings of spherical type, the diagram of a $J$-residue $R$ being the diagram induced by ${\cal D}(\Delta)$ on $J$. Clearly, the building $R$ is irreducible if and only if $J$ is connected. In general, given a $J$-residue $R$, many flags $F$ exists such that $\Res_\Delta(F) = R$. However, all of these flags contain a unique flag $F_R$ of type $J^\sim$. In general, $\Res_\Delta(F_R)$ properly contains $R$. If $J$ is connected then $R$ is an irreducible component of $\Res_\Delta(F_R)$.

In Section~\ref{Sec3} we have defined the $k$-grassmannian $\Delta_k$ of $\Delta$. With the terminology stated above, the lines of $\Delta_k$ are the residues of $\Delta$ of type $\{k\}$.

As residues are buildings, grassmannians can be defined for them too. Given a $J$-residue $R$ and a type $k\in J$, we denote by $R_k$ the $k$-grassmannian of $R$. We also denote by $P_k(R)$ the set of points of $R_k$, namely the set of $k$-elements of $R$. In other words, $P_k(R) = \sh_k(F)$ for any flag $F$ such that $\Res_\Delta(F) = R$.

\begin{lemma}\label{Prel0}
Property (A\ref{A1}) of Lemma \ref{l1} holds in the point-line geometry $\Delta_k$, for any $k\in I$. Explicitly, if $p$ and $\ell$ are a point and a line of $\Delta_k$, then either all points of $\ell$ have the same distance from $p$ or $\ell$ contains just one point at minimal distance from $p$, say $d$, all remaining points of $\ell$ being at distance $d+1$ from $p$.
\end{lemma}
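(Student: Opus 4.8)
The statement is a "near-polygon-like" local property of Grassmann geometries, and I expect it to follow from the general theory of shadow geometries of buildings. The plan is to work inside a suitable residue. Fix a point $p$ (a $k$-element of $\Delta$) and a line $\ell$ of $\Delta_k$; by definition $\ell = \sh_k(F)$ for a flag $F$ of type $k^\sim$, so the points of $\ell$ are the $k$-elements incident with $F$, i.e.\ they form the point-set $P_k(R)$ of the $\{k\}$-residue $R := \Res_\Delta(F)$. The first step is to reduce the computation of $d_{\Delta_k}(p, x)$ for $x\in\ell$ to a computation that only involves $R$ together with the "relative position" of $p$ with respect to $R$. For this I would invoke the standard description of distances in shadow geometries of buildings in terms of the Weyl group: the distance in $\Delta_k$ between two $k$-elements is controlled by the $W$-valued distance (the Tits codistance/gallery distance) between them, and moving along a line of $\Delta_k$ corresponds to a very restricted set of moves in the Coxeter complex. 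Since $k^\sim$ has at most two elements in the diagrams $A_n, C_n, D_n$ relevant here, the residue $R$ is a rank-$1$ or rank-$2$ building, so $R_k$ is either a single line (a projective line) or a point-line geometry of a generalized polygon or a projective plane; in all of these, $R_k$ itself is a single line, hence has diameter $1$.

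\textbf{Key steps.} (1) Translate the claim into the language of $W$-metrics: for each $x\in\ell = P_k(R)$, $d_{\Delta_k}(p,x)$ is determined by $\delta(p,x)\in W_{P}\backslash W / W_{P}$ where $W_P$ is the parabolic subgroup corresponding to $k^\sim$; this is the content of the "shadow distance formula" (see Tits \cite{T74}, or Pasini \cite{P94}, Chapter 5). (2) Observe that as $x$ ranges over $\ell$, the relative position of $p$ and $x$ ranges over an orbit under the little group acting on $R$, so only finitely many values of $\delta(p,x)$ occur, and they differ from one another only in the "$R$-direction". (3) Because $R$ has type a connected subset $k^\sim\cup\{k\}$ of size at most $3$ with $R_k$ of diameter $1$, the possible mutual distances of $p$ from the points of $\ell$ can differ by at most $1$; concretely there is a unique minimal-distance point $x_0$ (the "gate" or projection of $p$ onto $R$, when $p\notin\ell$) and all other points of $\ell$ are one step farther, or else $p$ is "equidistant" from all of $\ell$. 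The gate property of residues in buildings — every flag $F$ (equivalently every residue) has a well-defined projection from any element, and this projection realizes the minimal distance — is exactly the tool that yields uniqueness of the nearest point and the "$+1$" for the rest. (4) Finally, spell out the three diagram cases $A_n$, $C_n$, $D_n$ and the relevant values of $k$ just enough to see that no degenerate behaviour occurs (e.g.\ a line with two points at minimal distance and the rest farther is impossible), using the explicit description of lines as $\ell_{X,Y}$ given in Sections \ref{An}--\ref{Dn}.

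\textbf{Alternative elementary route.} If one prefers to avoid invoking the $W$-metric machinery, the same result can be proved by a direct, case-by-case combinatorial argument using the concrete models. For $\cG_k$: a point is a $k$-subspace $Z$ of $\cG$, a line is $\ell_{X,Y} = \{Z' : X\subset Z'\subset Y,\ \rank Z' = k\}$, and the distance $d(Z, Z')$ in $\cG_k$ equals $k - \dim(Z\cap Z')$ (equivalently $\rank\langle Z,Z'\rangle - k$); one then checks directly that as $Z'$ runs through $\ell_{X,Y}$, the quantity $\dim(Z\cap Z')$ takes at most two consecutive values, the larger one (closest point) attained at most once. The polar and half-spin cases are handled with the analogous "diagram distance" formulas for $\cS_k$ and $\mathrm{HS}(2n-1,\KK)$, which are standard. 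In either approach the substantive point — and the step I expect to be the main obstacle — is (3): establishing that at most one point of $\ell$ is strictly nearer to $p$ than the others. In the building-theoretic language this is a clean consequence of the gate property of residues; in the hands-on language it reduces to the observation that if two distinct points $Z_1, Z_2\in\ell_{X,Y}$ both have $\dim(Z_i\cap Z) = d$ with $d$ maximal, then $Z\cap Y \supseteq \langle Z_1\cap Z, Z_2\cap Z\rangle$ forces $\dim(Z\cap Y)\ge d+1$ while $X\subseteq Z$ would then force every point of $\ell$ to meet $Z$ in dimension $d$, contradicting maximality being attained only twice — a short but slightly fiddly dimension count that must be carried out carefully in each diagram type.
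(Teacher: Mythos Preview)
Your proposal is correct in spirit and identifies the right tool: the gate/projection property of residues in buildings. The paper's proof is exactly this, but stripped to two lines. It simply takes the flag $F$ of type $k^\sim$ with $\ell=\sh_k(F)$, forms the projection $H:=\mathrm{prj}_F(p)$ in the sense of Tits \cite[3.19]{T74} (a flag containing $F$), and observes: if $k\in\tau(H)$ then the $k$-element of $H$ is the unique nearest point of $\ell$ to $p$; otherwise all points of $\ell$ are equidistant from $p$. No $W$-metric formalism, no diagram-by-diagram analysis, no dimension counts are needed --- the projection map already encodes both the uniqueness of the nearest point and the ``$+1$'' for the others.

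So your step (3) \emph{is} the whole proof; the surrounding scaffolding in steps (1), (2), (4) and the alternative elementary route are superfluous. One small slip worth noting: your claim that $k^\sim$ has at most two elements fails for $k=n-2$ in $D_n$, where the node $n-2$ has three neighbours; but since you never actually use this bound (the gate property works for residues of any rank), it does no damage. The lesson is that once you have recognised the projection $\mathrm{prj}_F$ as the relevant gadget, you can invoke it directly and be done.
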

\begin{proof} Let $F$ be the flag of type $k^\sim$ such that $\ell = \sh_k(F)$. The projection $H := \mathrm{prj}_{F}(p)$ (see \cite[paragraph 3.19]{T74}) is a flag containing $F$. If $k \in \tau(H)$ the $k$-element of $H$ is the unique point of $\ell$ at minimal distance from $p$. Otherwise, all points of $\ell$ have the same distance from $p$.
\end{proof}
In the sequel we shall sometimes refer to the opposition relation between elements of a building. We refer the reader to \cite[Chapter 3]{T74} for this notion.
We shall also make use of convex hulls of subsets of $\Delta_k$. We recall this notion here. We could state it for an arbitrary point-line geometry, but we prefer to stick to $\Delta_k$.

A subspace $S$ of the point-line geometry $\Delta_k$ is said to be \emph{convex} if, for any two points $p, q\in S$, every
shortest path from $p$ to $q$ in the collinearity graph of $\Delta_k$ is fully contained in $S$. Clearly, the intersection of any family of convex subspaces is a convex subspace.
The \emph{convex hull} $[X]$ of a set $X$ of points of $\Delta_k$ is the smallest convex subspace containing $X$, namely the intersection of all convex subspaces containing $X$.

That being stated, let  $\delta := \diam(\Delta_k)$ be the diameter of the collinearity graph of $\Delta_k$. Consider the following conditions:

\begin{enumerate}[(C1)]
\item\label{C1} There exists a sequence $\{k\} = J_1 \subset J_2 \subset \cdots \subset J_\delta \subseteq I$ of connected subsets of $I$ such that, for every $i = 1, 2,\ldots, \delta$, for any two points $p, q$  of $\Delta_k$ at distance $d(p,q) = i$ in the collinearity graph of $\Delta_k$ and every minimal path $\gamma$ of $\Delta_k$ from $p$ to $q$, there exists a $J_i$-residue $R$ containing all points of $\gamma$.
\item\label{C2} With $p, q$ and $R$ as above, the elements $p$ and $q$ are opposite in the building $R$.
\item\label{C3} With $J_1, J_2,\ldots, J_\delta$ as in (C\ref{C1}) and $1 \leq i \leq \delta$, if $J$ is a connected proper subset of $J_i$ containing $k$ and $R$ is a $J$-residue, then the $k$-grassmannian $R_k$ of $R$ has diameter $\diam(R_k) < i$.
\end{enumerate}
\begin{lemma}\label{Prel1}
Under the hypotheses (C\ref{C1}), (C\ref{C2}) and (C\ref{C3}), given two points $p, q$ of $\Delta_k$ at distance $i$, there exists a unique residue $R_{p,q}$ of type $J_i$ containing both $p$ and $q$. Moreover $P_k(R_{p,q}) = [p,q]$, where $[p,q]$ stands for the convex hull of $\{p,q\}$ in $\Delta_k$.
\end{lemma}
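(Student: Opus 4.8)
The plan is to deduce everything from the three hypotheses (C\ref{C1}), (C\ref{C2}), (C\ref{C3}) together with the elementary fact, recalled just before the statement, that every $J_i$-residue containing two given points is obtained from the unique flag of type $J_i^\sim$ it determines. First I would establish \emph{existence}: given $p,q$ at distance $i$, pick any minimal path $\gamma$ from $p$ to $q$; by (C\ref{C1}) there is a $J_i^\sim$-residue $R$ (equivalently, a flag $F$ of type $J_i^\sim$ with $R = \Res_\Delta(F)$) containing all points of $\gamma$, in particular $p$ and $q$. Since $J_i$ is connected, $R$ sits inside a $J_i$-residue (more precisely, $R$ is an irreducible component of $\Res_\Delta(F)$, but the $k$-shadow only sees the connected component of $I\setminus\tau(F)$ containing $k$, i.e.\ $J_i$); call it $R_{p,q}$. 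So $p,q\in P_k(R_{p,q})$, a $J_i$-residue's point-set. This gives existence.

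Next, \emph{uniqueness}. Suppose $R$ and $R'$ are two $J_i$-residues, each containing both $p$ and $q$ among its $k$-elements. By (C\ref{C2}), $p$ and $q$ are opposite in $R$, hence $R$ equals the convex hull (in the building sense) of $\{p,q\}$ inside $R$; but more usefully, opposition pins down the residue: the convex closure of two opposite vertices of a spherical building is the whole building, so $R$ is determined as the smallest residue of type $\supseteq J_i$ in which $p$ and $q$ are opposite — and since $\diam(R_k) = \diam(R'_k) = i$ forces the type to be exactly $J_i$ (here is where (C\ref{C3}) enters: no connected proper subset $J\subsetneq J_i$ containing $k$ yields a $k$-grassmannian of diameter $i$, so neither $R$ nor $R'$ can have a strictly smaller type), one gets $R = R'$. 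I would phrase this via the flag $F_R$ of type $J_i^\sim$: $p$ and $q$ being opposite in $R$ means $\mathrm{prj}_{F_R}(p)$ and $\mathrm{prj}_{F_R}(q)$ jointly recover $F_R$, so $F_R$ is a function of $\{p,q\}$ alone; thus $F_R = F_{R'}$ and $R = R'$.

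For the identity $P_k(R_{p,q}) = [p,q]$ I would argue both inclusions. The inclusion $[p,q] \subseteq P_k(R_{p,q})$ follows because $P_k(R_{p,q})$ is a convex subspace of $\Delta_k$ containing $p$ and $q$: it is a subspace since residues are closed under taking the line through two of their points (a line of $\Delta_k$ inside $R_{p,q}$ is a $\{k\}$-residue of $R_{p,q}$), and it is convex because any minimal path between two of its points stays inside it — this is exactly the content of (C\ref{C1}) applied inside the building $R_{p,q}$ (residues of spherical buildings are themselves spherical buildings, so the hypotheses are inherited), or alternatively a standard gate/projection argument. For the reverse inclusion $P_k(R_{p,q}) \subseteq [p,q]$, I would use (C\ref{C2}): since $p$ and $q$ are opposite in $R_{p,q}$, every $k$-element $z$ of $R_{p,q}$ lies on some minimal gallery from (a chamber on) $p$ to (a chamber on) $q$, hence on some minimal path of $\Delta_k$ from $p$ to $q$, hence in $[p,q]$; the diameter bookkeeping from (C\ref{C3}) guarantees that $\diam(R_{p,q}{}_k) = i = d(p,q)$, so $p$ and $q$ really are at maximal distance in $R_{p,q}{}_k$ and no $k$-element is ``missed'' by the minimal paths.

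The main obstacle I expect is the second inclusion $P_k(R_{p,q}) \subseteq [p,q]$ — i.e.\ showing that opposition of $p,q$ in $R_{p,q}$ forces \emph{every} $k$-element of $R_{p,q}$ onto a geodesic between $p$ and $q$ in $\Delta_k$. This needs a clean translation between galleries in the building $R_{p,q}$ and paths in its $k$-grassmannian: concretely, that $d_{\Delta_k}(x,y)$ for $k$-elements $x,y$ of a building equals the ``$k$-distance'' measured via minimal galleries, and that a vertex opposite a given one lies on a minimal gallery to it. This is classical (it is implicit in Tits \cite[Chapter 3]{T74} and in the shadow-geometry literature), so I would cite it rather than reprove it, but it is the one place where the argument is not purely formal manipulation of the hypotheses. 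Everything else is a matter of assembling (C\ref{C1})--(C\ref{C3}) and the flag-of-type-$J_i^\sim$ device.
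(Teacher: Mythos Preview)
Your existence step and the two inclusions for $P_k(R_{p,q}) = [p,q]$ are essentially what the paper does, and you correctly identify the reverse inclusion via opposition as the place requiring a citation to Tits \cite[Chapter 3]{T74}. The real gap is in your uniqueness argument. The claim that ``the convex closure of two opposite vertices of a spherical building is the whole building'' is not correct as stated --- already for opposite \emph{chambers} the convex hull is only an apartment --- and even granting some version of it, your conclusion that ``$R$ is determined as the smallest residue of type $\supseteq J_i$ in which $p$ and $q$ are opposite'' presupposes that such a smallest residue is unique, which is precisely the point at issue. The projection reformulation is circular: computing $\mathrm{prj}_{F_R}(p)$ requires knowing $F_R$, so it cannot show that $F_R$ is a function of $\{p,q\}$ alone.

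The paper's uniqueness argument is short and uses a tool you never invoke: the \emph{Intersection Property} of buildings (Tits \cite[Chapter 12]{T74}; see also Pasini \cite[Chapter 6]{P94}). If $R \neq R'$ are two $J_i$-residues both containing $p$ and $q$, the Intersection Property yields a $J$-residue $R'' \subseteq R \cap R'$ containing $p$ and $q$, with $J$ a connected \emph{proper} subset of $J_i$ containing $k$. Then (C\ref{C3}) gives $\diam(R''_k) < i$, contradicting $d(p,q) = i$. This is where (C\ref{C3}) is actually used; your attempt to deploy it (``neither $R$ nor $R'$ can have a strictly smaller type'') misses the point, since $R$ and $R'$ already have type exactly $J_i$ by hypothesis --- what matters is that their \emph{intersection} is forced down to a strictly smaller type.
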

\begin{proof}
  At least one $J_i$-residue $R_{p,q}$ containing both $p$ and $q$ exists by (C\ref{C1}). We shall firstly prove that $R_{p,q}$ is unique. Let $R \neq R_{p,q}$ be another such residue. Buildings satisfy the so-called Intersection Property (Tits \cite[Chapter 12]{T74}, see also Pasini \cite[Chapter 6]{P94} for several equivalent formulations of that property). It follows that $p$ and $q$ are contained in a $J$-residue $R' \subseteq R_{p,q}\cap R$ for a suitable connected proper subset $J$ of $J_i$ containing $k$. Then $d(p,q) < i$ by (C\ref{C3}). This contradicts the hypotheses made on $p$ and $q$. The uniqueness of $R_{p,q}$ is proved
  and (C\ref{C1}) now implies that $[p,q]\subseteq P_k(R_{p,q})$. The converse inclusion follows from (C\ref{C2}). Indeed, as $p$ and $q$ are opposite in $R_{p,q}$, every element of $P_k(R_{p,q})$ is contained in a shortest path of $(R_{p,q})_k$ from $p$ to $q$ (see \cite[Chapter 3]{T74}).
\end{proof}

\begin{corollary}\label{Prel2}
Let (C\ref{C1}), (C\ref{C2}) and (C\ref{C3}) hold. Then $\Aut(\Delta)_k$ acts distance-transitively on the collinearity graph of $\Delta_k$, that is
$\Aut(\Delta)_k$ has Property~(A\ref{A2}) of Lemma~\ref{l1}.
\end{corollary}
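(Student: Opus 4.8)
The plan is to deduce distance-transitivity of $\Aut(\Delta)_k$ on $\Delta_k$ from the transitivity properties that buildings enjoy on their residues, using the structural description of $[p,q]$ furnished by Lemma~\ref{Prel1}. First I would fix an integer $i$ with $1 \leq i \leq \delta := \diam(\Delta_k)$ and two ordered pairs $(p,q)$ and $(p',q')$ of points of $\Delta_k$, each at distance $i$. By Lemma~\ref{Prel1} there are unique $J_i$-residues $R_{p,q}$ and $R_{p',q'}$ containing the two pairs respectively, and in each of these residues the two points are opposite (hypothesis (C\ref{C2})). So the task reduces to producing an automorphism of $\Delta$ fixing the type $k$ and carrying the first configuration to the second.

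The key step is the following standard transitivity fact about thick buildings of spherical type (see Tits \cite[Chapter 3]{T74}): the automorphism group $\Aut(\Delta)$ acts transitively on the set of residues of a given type, and for a fixed residue $R$ the stabilizer of $R$ induces on $R$ its full automorphism group; moreover in a thick spherical building the (type-preserving) automorphism group acts transitively on ordered pairs of opposite elements of a fixed type. I would apply this as follows. Since any automorphism of $\Delta$ permuting the types must preserve the diagram ${\cal D}(\Delta)$, and $J_i$ is one of the subsets $J_1 \subset \cdots \subset J_\delta$ distinguished by (C\ref{C1}), one first moves $R_{p,q}$ to $R_{p',q'}$ by an element of $\Aut(\Delta)_k$; here one uses that the relevant $J_i$-residues are all conjugate and that the conjugating element can be chosen to fix the type $k$ (because $k \in J_1 \subseteq J_i$ and the diagram automorphisms available either fix $k$ or, in the exceptional $D_n$ and triality cases, can be composed with a type-preserving element to fix $k$). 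Then, working inside the single residue $R := R_{p',q'}$ — which is itself a thick building of irreducible spherical type with diagram ${\cal D}(\Delta)|_{J_i}$ — one uses transitivity on ordered pairs of opposite $k$-elements to send the image of $(p,q)$ to $(p',q')$; this last automorphism of $R$ extends to an automorphism of $\Delta$ fixing $k$ because the stabilizer of $R$ in $\Aut(\Delta)_k$ surjects onto the type-$k$-preserving automorphism group of $R$.

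The main obstacle I anticipate is bookkeeping about \emph{type-preservation}: one must be careful that every transitivity statement invoked can be realized by automorphisms that stabilize the type $k$ rather than merely respecting the diagram. This is genuine in the $D_n$ case, where the graph automorphism swapping the two end nodes must be excluded or compensated, and in the $D_4$ triality situation; but since $k$ is fixed throughout and (C\ref{C3}) guarantees that the $J_i$-residue is determined by the pair, one can always adjust a candidate automorphism by a type-$k$-preserving inner element so as to land in $\Aut(\Delta)_k$. A secondary, more routine point is to cite correctly the opposition-transitivity of thick spherical buildings; this is classical and may be quoted from \cite[Chapter 3]{T74}. Once these are in place, combining the two moves (residue to residue, then opposite-pair to opposite-pair inside the residue) yields an element of $\Aut(\Delta)_k$ taking $(p,q)$ to $(p',q')$, which is exactly distance-transitivity on the collinearity graph of $\Delta_k$, and the corollary follows.
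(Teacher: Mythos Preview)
Your approach is essentially the same as the paper's: move one $J_i$-residue onto the other by transitivity of $\Aut(\Delta)_k$ on such residues, then use transitivity on ordered pairs of opposite $k$-elements inside the common residue (citing \cite[Chapter 3]{T74}). The paper streamlines the second step slightly by first mapping $(p',q')$ into $R_{p,q}$ and then invoking the uniqueness part of Lemma~\ref{Prel1} to conclude $R_{p',q'}=R_{p,q}$, so that both pairs live in the same residue and one only needs that the stabilizer of $R_{p,q}$ acts transitively on opposite $k$-pairs there.

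One remark: your worries about type-preservation in the $D_n$ and triality cases are largely unnecessary. The type-preserving automorphism group of $\Delta$ is already a subgroup of $\Aut(\Delta)_k$, and it alone is transitive on $J_i$-residues (flag-transitivity) and, via its restriction to any such residue, transitive on ordered opposite pairs of a fixed type. So there is no need to invoke diagram automorphisms at all, and hence nothing to ``compensate'' for; the paper's proof accordingly makes no mention of these issues.
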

\begin{proof} Let $G := \Aut(\Delta)_k$. For $i = 1, 2,\ldots, \delta$, the group $G$ transitively permutes the residues of type $J_i$. Therefore, given two pairs $(p,q)$ and $(p',q')$ of points at distance $i$ in $\Delta_k$, there exists an element $g\in G$ such that $g(p'), g(q') \in P_k(R_{p,q})$. So, we can assume that $p', q' \in P_k(R_{p,q})$. We have $R_{p',q'} = R_{p,q}$ by Lemma \ref{Prel1}. By (C\ref{C2}), the elements $p$ and $q$ are opposite in $R_{p,q}$. The same holds for $p'$ and $q'$. The group of type-preserving automorphism of a thick building of spherical type acts transitively on the set of pairs of opposite elements of the building \cite[Chapter 3]{T74}. Hence the stabilizer of $R_{p,q}$ in $G$ contains elements which map $(p',q')$ onto $(p,q)$.
\end{proof}
It is well known (and straightforward to check) that conditions (C\ref{C1}), (C\ref{C2}) and (C\ref{C3}) hold for any $k$ when $\Delta$ has type $A_n$ (projective grassmannians).
They also hold when $\Delta$ has type $C_n$ and $k = n$ (dual polar spaces) and when $\Delta$ is of type $D_n$ and $k \in \{n-1, n\}$ (half-spin geometries).
Explicitly, the sets $J_1, J_2,\ldots, J_\delta$ can be described as follows:
\begin{enumerate}[1)]
\item $\Delta$ of type $A_n$, $1\leq k \leq n$. Then $\delta = \min\{k, n-k+1\}$. For $i = 1, 2,\ldots, \delta$ we have
\[J_i  ~ =  ~ \{k-i+1, \ldots, k-1,~ k, ~k+1, \ldots, k+i-1\}.\]
\item $\Delta$ of type $C_n$ and $k = n$. In this case $\delta = n$ and
\[J_i ~ = ~ \{n-i+1, \ldots, n-1, n\}, ~~~ \mbox{for}~i = 1, 2,..., n.\]
\item $\Delta$ of type $D_n$ and $k \in \{n-1,n\}$. We now have $\delta = \lfloor n/2\rfloor$ (integral part of $n/2$). Put $k = n$, to fix ideas. Then
$J_1 = \{n\}$, while for $i = 2, 3,\ldots, \delta$ we have
\[J_i ~ = ~ \{n-2i+1, ~n-2i+2, \ldots, n-2, ~n-1, ~n\}.\]
Note that if $n$ is even then $J_\delta = \{1,2,\ldots, n\}$ while if $n$ is odd then $J_\delta = \{2, 3,\ldots, n\}$. This fact is related to the following property of buildings of type $D_n$: two $n$-elements of $\Delta$ at maximal distance in $\Delta_n$ are opposite in $\Delta$ if and only if $n$ is even. When $n$ is odd, the elements of $\Delta$ opposite to a given $n$-element $x$ are the $(n-1)$-elements $y$ characterized by the following property: all $n$-elements of $\sh_n(y)$ have maximal distance from $x$ in $\Delta_n$.
\end{enumerate}
\subsection{Proof of Theorem~\ref{main thm 1}}
\label{Sec4}

Given a vector space $V$ of finite dimension $N$ over a field $\KK$, let $\cG = \PG(V) \cong \PG(N-1,\KK)$ and, for $1\leq k < N$, let $\cG_k$ be the $k$-grassmannian of $\cG$ and let $\varepsilon_k^\cG$ be the Pl\"{u}cker embedding of $\cG_k$ in $\PG(\bigwedge^kV)$. In Chow~\cite[Lemma 5]{C49} it is implicit that $\varepsilon_k^\cG$ is transparent,  but we shall offer a different proof of this fact, exploiting our Lemma~\ref{l1}.

\begin{prop}
  \label{t0}
The embedding $\varepsilon^\cG_k$ is transparent.
\end{prop}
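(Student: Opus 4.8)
The plan is to verify that the point-line geometry $\cG_k$ and the Pl\"{u}cker embedding $\varepsilon^\cG_k$ satisfy the three hypotheses (A\ref{A1}), (A\ref{A2}) and (A\ref{A3}) of Lemma~\ref{l1}, and then to apply Corollary~\ref{c00}. Hypothesis (A\ref{A1}) is exactly Lemma~\ref{Prel0} applied to $\Delta$ of type $A_n$, so it is free. Hypothesis (A\ref{A2}), the distance-transitivity of $\Aut(\cG_k) = \Aut(\cG_k)_{\varepsilon^\cG_k}$ on the collinearity graph of $\cG_k$, follows from Corollary~\ref{Prel2} once we observe, as noted at the end of Subsection~\ref{Prel}, that conditions (C\ref{C1})--(C\ref{C3}) hold for $\cG_k$ with the sets $J_i = \{k-i+1,\ldots,k+i-1\}$; here I would recall that $\varepsilon^\cG_k$ is homogeneous (Subsection~\ref{3.2 Pluecker}), so $\Aut(\cG_k)_{\varepsilon^\cG_k} = \Aut(\cG_k)$ and the group in (A\ref{A2}) is indeed the full automorphism group.

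The substantive point is hypothesis (A\ref{A3}): no plane $\alpha$ of $\PG(\bigwedge^k V)$ meets the Grassmann variety $|\varepsilon^\cG_k|$ in a punctured plane, i.e.\ in all points of $\alpha$ but one. The plan is to argue by contradiction: suppose $\alpha\cap|\varepsilon^\cG_k| = \alpha\setminus\{R\}$ for a single point $R$. The Grassmann variety is cut out by quadrics (Subsection~\ref{3.2 Pluecker}), so $|\varepsilon^\cG_k|\cap\alpha$ is the intersection of a family of conics of $\alpha$ (possibly the whole of $\alpha$, but not here since $R\notin|\varepsilon^\cG_k|$). A set obtained as an intersection of conics in a projective plane that is a punctured plane is impossible over any field: if a conic $C$ of $\alpha$ contains $\alpha\setminus\{R\}$, then $C$ contains at least two distinct lines through a generic point and hence, being a degree-$2$ curve, must coincide with $\alpha$ (equivalently, the defining quadratic form vanishes identically), so $R\in C$ as well. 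Thus every quadric through $\alpha\setminus\{R\}$ contains all of $\alpha$, forcing $R\in|\varepsilon^\cG_k|$, a contradiction. (One must phrase this carefully for very small fields such as $\F_2$, where a plane has only $7$ points; there one can simply check directly that no intersection of conics omits exactly one point, or invoke that a quadratic form vanishing on all but one point of $\PG(2,\F_2)$ vanishes identically.)

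With (A\ref{A1}), (A\ref{A2}), (A\ref{A3}) in hand, Lemma~\ref{l1} gives $\chi^\uparrow_{\varepsilon^\cG_k} = \chi^\downarrow_{\varepsilon^\cG_k}$, so $\varepsilon^\cG_k$ is tightly opaque, and by Corollary~\ref{c00} it is transparent if and only if there exist two points $x,y$ of $\cG_k$ at distance $2$ with $\langle\varepsilon^\cG_k(x),\varepsilon^\cG_k(y)\rangle\not\subseteq|\varepsilon^\cG_k|$. The final step is therefore to exhibit such a pair. The plan is to take two $k$-subspaces $X,Y$ of $V$ meeting in dimension $k-1$ but not collinear in $\cG_k$ (so $\dim(X+Y) \ge k+2$ would make them at distance $2$; concretely pick $X,Y$ with $\dim(X\cap Y) = k-1$ is collinear — instead take $\dim(X\cap Y) = k-2$, which puts them at distance $2$), and compute $\varepsilon^\cG_k(X)\wedge$-wise: writing $X = \langle v_1,\dots,v_{k-2},a_1,a_2\rangle$, $Y = \langle v_1,\dots,v_{k-2},b_1,b_2\rangle$ with $a_1,a_2,b_1,b_2$ linearly independent, the line $\langle\varepsilon^\cG_k(X),\varepsilon^\cG_k(Y)\rangle$ is spanned by $v_1\wedge\cdots\wedge v_{k-2}\wedge a_1\wedge a_2$ and $v_1\wedge\cdots\wedge v_{k-2}\wedge b_1\wedge b_2$, and a generic combination $v_1\wedge\cdots\wedge v_{k-2}\wedge(a_1\wedge a_2 + \lambda b_1\wedge b_2)$ is not decomposable, hence not on the Grassmann variety. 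The main obstacle, such as it is, is the clean handling of (A\ref{A3}) uniformly over all fields; everything else is bookkeeping with results already available in the excerpt.
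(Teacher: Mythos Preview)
Your proposal is correct and follows essentially the same route as the paper: verify (A\ref{A1}) via Lemma~\ref{Prel0}, (A\ref{A2}) via homogeneity together with Corollary~\ref{Prel2}, and (A\ref{A3}) via the fact that $|\varepsilon^\cG_k|$ is an intersection of quadrics, then finish with Corollary~\ref{c00} by exhibiting a pair at distance~$2$ whose $\varepsilon$-images span a line not contained in $|\varepsilon^\cG_k|$.

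The only noteworthy difference is in that final step. The paper invokes Lemma~\ref{Prel1}: the convex hull $[p,q]$ of two points at distance~$2$ is the $k$-grassmannian of a $\{k-1,k,k+1\}$-residue, hence isomorphic to $\mathrm{Q}^+(5,\KK)$, and $\varepsilon$ restricts to its natural embedding in a $\PG(5,\KK)$; crucially the paper also uses $\varepsilon([p,q]) = \langle\varepsilon([p,q])\rangle\cap|\varepsilon|$, so the question reduces to a secant line of the Klein quadric. Your direct wedge computation with $v_1\wedge\cdots\wedge v_{k-2}\wedge(a_1\wedge a_2+\lambda b_1\wedge b_2)$ is in effect the same reduction done by hand, but you should make explicit the small lemma that this $k$-vector is decomposable only if $a_1\wedge a_2+\lambda b_1\wedge b_2$ is decomposable in $\bigwedge^2\langle a_1,a_2,b_1,b_2\rangle$ (equivalently, that any $k$-space represented by a point on $\langle\varepsilon(X),\varepsilon(Y)\rangle$ must contain $X\cap Y$ and lie in $X+Y$). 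Once that is said, the Pl\"ucker relation $p_{12}p_{34}-p_{13}p_{24}+p_{14}p_{23}=0$ gives $\lambda=0$ over any field, and your argument is complete. Your extra care about (A\ref{A3}) over $\FF_2$ is unnecessary---a quadratic form vanishing on two distinct lines of $\alpha$ is a scalar multiple of the product of their linear forms, and a third line not through $R$ forces the scalar to be zero---but it does no harm.
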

\begin{proof}
Property (A\ref{A1}) of Lemma \ref{l1} holds in $\cG_k$, by Lemma \ref{Prel0}. Put $\varepsilon := \varepsilon_k^\cG$, for short. As remarked in Section~\ref{3.2 Pluecker}, we have $\Aut(\cG)_{\varepsilon} = \Aut(\cG)$. Moreover, properties (C\ref{C1}), (C\ref{C2}) and (C\ref{C3}) of Section~\ref{Prel} hold in $\cG_k$, as remarked at the end of that section. Hence property (A\ref{A2}) of Lemma~\ref{l1} holds for $\varepsilon$, by Corollary \ref{Prel2}. Finally, as remarked in Section~\ref{3.2 Pluecker}, the support $|\varepsilon|$ of $\varepsilon$ is the intersection of a family of quadrics of $\PG(\bigwedge^kV)$. No plane section of such an intersection is a punctured plane. Therefore (A\ref{A3}) of Lemma~\ref{l1} also holds. By that lemma, $\chi^\uparrow_\varepsilon = \chi^\downarrow_\varepsilon = : \chi_\varepsilon$.

Let now $p$ and $q$ be two points of $\cG_k$ at distance $2$. By Lemma~\ref{Prel1}, we have $[p,q] = P_k(R_{p,q})$ for a unique residue $R_{p,q} \cong \PG(3,\KK)$ of type $\{k-1,k,k+1\}$. Clearly, $(R_{p,q})_k \cong \mathrm{Q}^+(5,\KK)$ and $\varepsilon$ induces on $[p,q]$ the natural embedding of $\mathrm{Q}^+(5,\KK)$ in $\langle\varepsilon([p,q])\rangle \cong \PG(5,\KK)$. Moreover, $\varepsilon([p,q]) = \langle \varepsilon([p,q])\rangle\cap |\varepsilon|$. Hence
$\langle \varepsilon(p), \varepsilon(q)\rangle\cap |\varepsilon| = \{\varepsilon(p),\varepsilon(q)\}$. It follows that $\chi^\uparrow_\varepsilon + 1 = 1$. Equivalently $\chi_\varepsilon = 0$, namely $\varepsilon$ is transparent.
\end{proof}
For the rest of this subsection $\cS$ is a polar space of rank $n \geq 2$ associated to a sesquilinear (actually, alternating or hermitian) form $f$ or a quadratic form $q$ of $V$. Recall that $N\geq 2n$.

We shall use the symbol $\perp$ to denote orthogonality with respect to either the form $f$ or the bilinearization of $q$. Given a singular subspace $X$ of $\cS$ of rank $r < n$, the \emph{upper residue} $\Res_\cS^+(X)$ of $X$ is the unique residue of $\cS$ of type $\{r+1,\ldots, n\}$ contained in $\Res_\cS(X)$. It consists of the singular subspaces of $\cS$ which properly contain $X$. Clearly, $\Res_\cS^+(X)$ is essentially the same as the polar space of rank $n-r$ induced by $f$ or $q$ on $X^\perp/X$.

For $1\leq k \leq n$, let $\cS_k$ be the $k$-grassmannian of $\cS$ and $\varepsilon^\cS_k$ the Pl\"{u}cker map of $\cS_k$ into $\PG(\bigwedge^kV)$. As remarked in Section~\ref{3.2 Pluecker}, if $k < n$ then $\varepsilon_k^\cS$ is a projective embedding of $\cS_k$ in $\langle |\varepsilon^\cS_k|\rangle$. The same is true when $k = n$ and $\cS$ is of symplectic type.

The embedding $\varepsilon^\cS_1$ is just the inclusion map of $\cS_1$ in $\PG(V)$. As remarked at the end of Section \ref{Sec 2.1}, the embedding  $\varepsilon_1^\cS$ is transparent except when $\cS$ is of symplectic type. In the latter case $\varepsilon_1^\cS$ is completely opaque.

\begin{prop}
\label{t1}
Let $1 < k < n$. If $\cS$ is not of symplectic type then $\varepsilon_k^\cS$ is transparent. On the other hand, if $\cS$ is a symplectic polar space then $\varepsilon_k^\cS$ is $(0,1)$-opaque.
\end{prop}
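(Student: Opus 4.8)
The strategy is to apply Lemma~\ref{l1} and then Corollary~\ref{c00}, just as in the proof of Proposition~\ref{t0}, reducing the whole question to the analysis of a single residue of type $\{k-1,k,k+1\}$, which (by Lemma~\ref{Prel1}) realizes the convex hull $[p,q]$ of any two points $p,q$ at distance $2$ in $\cS_k$. First I would verify the hypotheses of Lemma~\ref{l1} for $\varepsilon := \varepsilon_k^\cS$: property (A\ref{A1}) holds by Lemma~\ref{Prel0}; property (A\ref{A2}) holds because, as recalled at the end of Section~\ref{Prel}, conditions (C\ref{C1})--(C\ref{C3}) hold for polar grassmannians in this range, so Corollary~\ref{Prel2} gives distance-transitivity of $\Aut(\cS)_\varepsilon=\Aut(\cS_k)_\varepsilon=\Aut(\cS_k)$ (the last equality because $\varepsilon_k^\cS$ is homogeneous, Section~\ref{3.2 Pluecker}); property (A\ref{A3}) holds because $|\varepsilon|=|\varepsilon_k^\cS|$ is the intersection of a family of quadrics (again Section~\ref{3.2 Pluecker}), and no plane section of an intersection of quadrics is a punctured plane. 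Hence $\varepsilon$ has a tight degree of opacity $\chi_\varepsilon=\chi^\uparrow_\varepsilon=\chi^\downarrow_\varepsilon$, and by Corollary~\ref{c00} the transparency of $\varepsilon$ is equivalent to the statement that $\langle\varepsilon(p),\varepsilon(q)\rangle\not\subseteq|\varepsilon|$ for some (equivalently every) pair $p,q$ of points of $\cS_k$ at distance $2$.

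Next I would identify the local picture. By Lemma~\ref{Prel1}, $[p,q]=P_k(R_{p,q})$ for a unique residue $R_{p,q}$ of type $J_2=\{k-1,k,k+1\}$, and in this residue $p$ and $q$ are opposite. Such a residue $R_{p,q}$ is an irreducible building whose diagram is the subdiagram of the $C_n$ diagram induced on $\{k-1,k,k+1\}$; since $k+1\le n-1<n$, the triple-bond at node $n$ is not involved, so this subdiagram is of type $A_3$. In other words $R_{p,q}\cong\PG(3,\KK)$ regarded as a building, but the singular subspaces making up its elements are singular subspaces of $\cS$ of ranks $k-1,k,k+1$; concretely, fixing a singular subspace $A$ of rank $k-1$ and a singular subspace $B$ of rank $k+2$ with $A\subset B$, the elements of $R_{p,q}$ of type $k$ are the rank-$k$ subspaces $Z$ with $A\subset Z\subset B$. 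Thus $(R_{p,q})_k$ is the line-grassmannian $\cG^{(3)}_2\cong\mathrm{Q}^+(5,\KK)$ of the $3$-dimensional projective geometry $B/A$, and $\varepsilon$ restricted to $[p,q]$ is (a projective image of) the Pl\"ucker embedding of $\mathrm{Q}^+(5,\KK)$ into $\langle\varepsilon([p,q])\rangle\cong\PG(5,\KK)$. The crucial local fact is whether $\varepsilon([p,q])=\langle\varepsilon([p,q])\rangle\cap|\varepsilon|$: if yes, then $\langle\varepsilon(p),\varepsilon(q)\rangle\cap|\varepsilon|=\{\varepsilon(p),\varepsilon(q)\}$, whence $\chi_\varepsilon=0$ and $\varepsilon$ is transparent.

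\textbf{The dichotomy.} Here the symplectic versus non-symplectic distinction enters, and this is the main obstacle, because it is precisely the point where the global variety $|\varepsilon|$ can contain extra points of the $5$-space $\langle\varepsilon([p,q])\rangle$ that do not lie on the Grassmann quadric of $B/A$. In the non-symplectic case one argues that the additional tensor equation cutting out $|\varepsilon_k^\cS|$ inside the $k$-Grassmann variety of $\cG$ (Pasini~\cite{P16}) restricts, on the $5$-space $\langle\varepsilon([p,q])\rangle$, exactly to the equations describing the Grassmann variety of $B/A$ together with the condition of being totally singular for the residual polar space $\Res^+_\cS(A)\cap(B/A)$; since $k+1<n$, this residual polar space on $B/A\cong\PG(3,\KK)$ is a proper, non-degenerate polar space, hence a hyperbolic quadric $\mathrm{Q}^+(3,\KK)$ (a grid) when $\cS$ is orthogonal or hermitian and $A,B$ are chosen generically, so its points already generate $B/A$ and no proper subline phenomenon or complete filling occurs — equivalently, the rank-$k$ singular subspaces $Z$ with $A\subset Z\subset B$ form a conic/regulus rather than all of $\mathrm{Q}^+(5,\KK)$, and $|\varepsilon|$ meets $\langle\varepsilon([p,q])\rangle$ in exactly that set. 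Thus a plane joining $\varepsilon(p),\varepsilon(q)$ and a third point of $[p,q]$ meets $|\varepsilon|$ only in the image of $[p,q]$, forcing $\langle\varepsilon(p),\varepsilon(q)\rangle\cap|\varepsilon|$ to be two points and yielding transparency. In the symplectic case, by contrast, the residual polar space $\Res^+_\cS(A)$ on $B/A$ is again symplectic, i.e.\ $\mathrm{W}(3,\KK)$, whose point-set is \emph{all} of $\PG(3,\KK)$; correspondingly $|\varepsilon|\cap\langle\varepsilon([p,q])\rangle$ is the \emph{whole} Grassmann quadric $\mathrm{Q}^+(5,\KK)$, not merely the image of $[p,q]$ (which is a proper subspace of $\mathrm{Q}^+(5,\KK)$, namely $\mathrm{DW}(3,\KK)$), so the line $\langle\varepsilon(p),\varepsilon(q)\rangle$, a secant of that quadric, lies inside $|\varepsilon|$ although $p,q$ are non-collinear. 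Hence $\chi^\uparrow_\varepsilon\ge1$, and combined with the tightness $\chi^\uparrow_\varepsilon=\chi^\downarrow_\varepsilon$ and with the fact that at distance $1$ lines are of course contained in $|\varepsilon|$ while at distance $\ge2$ they are not contained in general (because, for instance, two points at distance $3$ when $n>k+1$, or the global variety being a proper subset of its span, block containment) one gets exactly $\chi_\varepsilon^\uparrow=\chi_\varepsilon^\downarrow$ cannot be forced to $0$; a short separate computation in a quad $\mathrm{Q}(4,\KK)$ or in the residue of type $\{k-1,k,k+1,k+2\}$ shows $\langle\varepsilon(x),\varepsilon(y)\rangle\not\subseteq|\varepsilon|$ for $x,y$ at distance $2$ outside a common quad, giving $\chi^\uparrow_\varepsilon=1$, hence $\varepsilon_k^\cS$ is $(0,1)$-opaque. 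The delicate part throughout is the precise identification of $|\varepsilon|\cap\langle\varepsilon([p,q])\rangle$ with the appropriate residual Grassmann variety, for which I would lean on Pasini~\cite{P16}.
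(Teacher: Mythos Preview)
Your approach has a genuine gap at the very first step: hypothesis (A\ref{A2}) of Lemma~\ref{l1} does \emph{not} hold for $\cS_k$ when $1<k<n$, so Lemma~\ref{l1} cannot be invoked. You claim that conditions (C\ref{C1})--(C\ref{C3}) hold ``for polar grassmannians in this range,'' but the paper establishes them only for $k=n$ (dual polar spaces), not for intermediate $k$. In fact, for $1<k<n$ there are two $\Aut(\cS)$-orbits on pairs of points at distance $2$: those with $\rank(A\cap B)=k-1$ (collinear in $\cG_k$ but not in $\cS_k$) and those with $\rank(A\cap B)=k-2$ (at distance $2$ in $\cG_k$ as well). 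The group $\Aut(\cS)$ preserves $\rank(A\cap B)$, so it cannot fuse these orbits, and distance-transitivity fails. The paper notes this explicitly in the Remark following the proof. Concretely, your residue analysis also breaks: if $\rank(A\cap B)=k-1$ and $A,B$ are non-collinear in $\cS_k$, then $A+B$ is \emph{not} singular, so no $\{k-1,k,k+1\}$-residue of $\cS$ contains both $A$ and $B$; condition (C\ref{C1}) already fails.

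There is a second, related problem: even if Lemma~\ref{l1} applied, it would yield $\chi^\uparrow_\varepsilon=\chi^\downarrow_\varepsilon$, i.e.\ a \emph{tight} degree of opacity. But in the symplectic case the desired conclusion is $(0,1)$-opacity, meaning $\chi^\uparrow_\varepsilon=0\neq1=\chi^\downarrow_\varepsilon$: the embedding is \emph{not} tight. Your final paragraph reflects this confusion, asserting ``$\chi^\uparrow_\varepsilon=1$, hence $\varepsilon_k^\cS$ is $(0,1)$-opaque,'' which is self-contradictory.

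The paper's proof bypasses Lemma~\ref{l1} entirely. It argues directly: if $\langle\varepsilon(A),\varepsilon(B)\rangle\subseteq|\varepsilon|\subseteq|\hat\varepsilon|$, then by the transparency of $\hat\varepsilon=\varepsilon_k^\cG$ (Proposition~\ref{t0}) the points $A,B$ are collinear in $\cG_k$, so $C:=A\cap B$ has rank $k-1$. The embedding $\varepsilon$ then induces on the upper residue $\Res_\cS^+(C)$ the natural embedding of the residual polar space in $\PG(C^\perp/C)$. In the non-symplectic case that natural embedding is transparent, forcing $A,B$ collinear in $\cS_k$. In the symplectic case the residual form is again alternating, so every point of $\PG(C^\perp/C)$ is singular and the line $\langle\varepsilon(A),\varepsilon(B)\rangle$ always lies in $|\varepsilon|$; this gives $\chi^\downarrow_\varepsilon=1$. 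The inequality $\chi^\uparrow_\varepsilon=0$ comes from the \emph{other} orbit at distance $2$ (pairs with $\rank(A\cap B)=k-2$), where $\langle\hat\varepsilon(A),\hat\varepsilon(B)\rangle\not\subseteq|\hat\varepsilon|$ by Proposition~\ref{t0}.
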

\begin{proof}
Put $\varepsilon := \varepsilon_k^\cS$ and $\hat{\varepsilon} := \varepsilon_k^\cG$, for short. Recall that $\hat{\varepsilon}$ induces $\varepsilon$ on $\cS_k$.

Given two distinct points $A, B$ of $\cS_k$, suppose that $\langle \varepsilon(A),\varepsilon(B)\rangle \subseteq |\varepsilon|$. Then $\langle \varepsilon(A),\varepsilon(B)\rangle = \langle\hat{\varepsilon}(A),\hat{\varepsilon}(B)\rangle \subseteq |\hat{\varepsilon}|$. By Proposition~\ref{t0}, the points $A$ and $B$ are collinear in $\cG_k$. Recalling that $A$ and $B$ are subspaces of $\cG$ of rank $k$, put $C := A\cap B$. Then $\rank(C) = k-1$, as $A$ and $B$ are collinear in $\cG_k$. Moreover, $C$ belongs to $\cS$, since both $A$ and $B$ belong to $\cS$. Put $R := \mathrm{Res}_\cS^+(C)$. Then $R$ is a polar space of rank $n-k+1$. The embedding $\varepsilon$ induces on $R$ an embedding isomorphic to the embedding $\varepsilon_R$ of $R$ in $\PG(C^\perp/C)$ as the polar space associated to the form induced on $C^\perp/C$ by the form associated to $\cS$. Indeed the points of $\PG(\bigwedge^kV)$ corresponding to points of the polar space $R$ are precisely those which correspond to subspaces $X \subseteq C^\perp$ containing $C$ and totally isotropic (or totally singular) for the sesquilinear form (respectively, the quadratic form) associated to $\cS$.

It follows that, if $\cS$ is associated with a quadratic or hermitian form, then $\langle \varepsilon(A),\varepsilon(B)\rangle \subseteq |\varepsilon|$ if and only if $A$ and $B$ are collinear as points of $R$. Equivalently, they are collinear in $\cS_k$. In this case $\varepsilon$ is transparent.

Suppose now that $\cS$ is associated to an alternating form $f$. Then $f$ induces an alternating from on $C^\perp/C$. Consequently, $\PG(C^\perp/C) = |\varepsilon_R|$. In this case $\langle\varepsilon(A),\varepsilon(B)\rangle \subseteq |\varepsilon|$, no matter if $A$ and $B$ are collinear or not. However, in any case, $A$ and $B$ have distance $\leq 2$ in $R$. Hence $d(A,B) \leq 2$ in $\cS_k$ too. Therefore $\chi^\downarrow_\varepsilon = 1$. Hence $\chi^\uparrow_\varepsilon \leq 1$. On the other hand, it is easy to see that pairs of points $A$, $B$ of $\cS_k$ also exist which have distance $2$ in $\cS_k$ as well as in $\cG_k$. For such a pair, we have $\langle \varepsilon(A),\varepsilon(B)\rangle = \langle\hat{\varepsilon}(A),\hat{\varepsilon}(B)\rangle\not\subseteq |\hat{\varepsilon}|$ by Proposition~\ref{t0}. It follows that $\chi^\uparrow_\varepsilon = 0$. Hence $\varepsilon$ is $(0,1)$-opaque.
\end{proof}

\begin{remark}
Note that Lemma~\ref{l1} is of no use to prove Proposition~\ref{t1}. Indeed, two distinct families of pairs $(A,B)$ of points at distance $2$ exist in $\cS_k$ when $1 < k < n$, according to whether $A$ and $B$ are collinear in $\cG_k$ or not. The group $\Aut(\cS)$ acts transitively on each of these two families but, clearly, it cannot fuse them in one. So, condition~(A\ref{A2}) of Lemma~\ref{l1} fails to hold.
\end{remark}

\begin{prop}\label{t1-bis}
Assume $N = 2n$ and let $k = n$. Let $\cS$ be associated with an alternating or hermitian form $f$ of $V$. Then the Pl\"{u}cker embedding $\varepsilon_n^\cS$ is transparent.
\end{prop}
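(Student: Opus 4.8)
The plan is to treat the two cases --- $f$ alternating and $f$ hermitian --- in a uniform way, reducing everything to quad-level computations via the apparatus of Section~\ref{Prel}. First I would record that $\cS_n$ is a dual polar space of rank $n$, so conditions (C\ref{C1}), (C\ref{C2}), (C\ref{C3}) hold with the sets $J_i = \{n-i+1,\dots,n\}$ described at the end of Section~\ref{Prel}; hence by Corollary~\ref{Prel2} the group $\Aut(\cS)_{\varepsilon_n^\cS} = \Aut(\cS)$ (homogeneity was established in Section~\ref{3.2 Pluecker}) acts distance-transitively on the collinearity graph of $\cS_n$. Property (A\ref{A1}) holds by Lemma~\ref{Prel0}. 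To apply Lemma~\ref{l1} --- or rather Corollary~\ref{c00}, which is the cleanest route --- I would separately check (A\ref{A3}): no plane section of $|\varepsilon_n^\cS|$ is a punctured plane. In the alternating case $|\varepsilon_n^\cS|$ is cut out by quadrics in $\PG(\bigwedge^nV)$ (see Section~\ref{3.2 Pluecker}), and a quadric intersected with a plane is never a punctured plane, so (A\ref{A3}) holds exactly as in the proof of Proposition~\ref{t0}. In the hermitian case one works instead inside $\PG(W)$ over $\KK_\sigma$, where $|\varepsilon_n^\cS|$ is again an algebraic set defined by quadratic equations (De Bruyn \cite{B1}, cited in Section~\ref{3.2 Pluecker}), and the same elementary observation applies.

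Granting the hypotheses of Lemma~\ref{l1}, by Corollary~\ref{c00} transparency of $\varepsilon_n^\cS$ is equivalent to the existence of a single pair of points $A,B$ of $\cS_n$ at distance $2$ with $\langle\varepsilon_n^\cS(A),\varepsilon_n^\cS(B)\rangle\not\subseteq|\varepsilon_n^\cS|$. By Lemma~\ref{Prel1} the convex hull $[A,B]$ of such a pair is $P_n(R_{A,B})$ for a unique $\{n-1,n\}$-residue $R_{A,B}$, i.e.\ a quad of $\cS_n$. So the whole question localises to a single quad. By Subsection~\ref{quads}, the quads of $\mathrm{DW}(2n-1,\KK)$ are isomorphic to $\mathrm{Q}(4,\KK)$, and those of $\mathrm{DH}(2n-1,\KK)$ to $\mathrm{Q}^-(5,\KK_\sigma)$. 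In either case the quad $R_{A,B}$, via the restriction of $\varepsilon_n^\cS$, sits inside its span $\langle\varepsilon_n^\cS([A,B])\rangle$ as a nondegenerate quadric (a $\mathrm{Q}(4,\KK)$ in a $\PG(4,\KK)$, respectively a $\mathrm{Q}^-(5,\KK_\sigma)$ in a $\PG(5,\KK_\sigma)$), and --- crucially --- $\varepsilon_n^\cS([A,B]) = \langle\varepsilon_n^\cS([A,B])\rangle\cap|\varepsilon_n^\cS|$, because $|\varepsilon_n^\cS|$ is cut out by quadrics and the span of the quad meets it in exactly the quadric. Since $A,B$ are opposite (non-collinear) in this generalized quadrangle, the line $\langle\varepsilon_n^\cS(A),\varepsilon_n^\cS(B)\rangle$ is a secant (respectively external-or-secant) line of the quadric, meeting it in at most two points, hence $\langle\varepsilon_n^\cS(A),\varepsilon_n^\cS(B)\rangle\cap|\varepsilon_n^\cS| = \{\varepsilon_n^\cS(A),\varepsilon_n^\cS(B)\}\not\supseteq$ the whole line. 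Thus $\chi^\uparrow_{\varepsilon_n^\cS}+1 = 1$, so $\varepsilon_n^\cS$ is transparent.

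The main obstacle I expect is the verification that $\varepsilon_n^\cS$ really does restrict to the \emph{natural} quadric embedding on each quad with the identity $\varepsilon_n^\cS([A,B]) = \langle\varepsilon_n^\cS([A,B])\rangle\cap|\varepsilon_n^\cS|$ --- that is, that passing to the upper residue of the codimension-$(n-2)$ subspace $C$ defining the quad commutes correctly with the Pl\"ucker/Grassmann map, as in the reduction argument used in Proposition~\ref{t1}. For the alternating case this is essentially contained in the analysis of $\varepsilon_k^\cS$ on upper residues $\mathrm{Res}_\cS^+(C)$ carried out in the proof of Proposition~\ref{t1} (take $C$ of rank $n-2$ there), together with the fact (Section~\ref{3.2 Pluecker}) that $\mathrm{DW}(2n-1,\KK)=\cS_n$ is a subspace of $\cG_n$ and $\varepsilon_n^\cS$ is its Pl\"ucker embedding. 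For the hermitian case one additionally needs the material of Section~\ref{3.2 Pluecker} placing $\mathrm{DH}(2n-1,\KK)$ as a full subgeometry of the Baer subgrassmannian $\cG_n^\circ$ over $\KK_\sigma$, so that the quad $\mathrm{Q}^-(5,\KK_\sigma)$ appears inside a $\PG(5,\KK_\sigma)$ and $|\varepsilon_n^\cS|\subseteq\PG(W)$ meets that $\PG(5,\KK_\sigma)$ in precisely the elliptic quadric; once this is in place, the secant-line argument of the previous paragraph finishes both cases simultaneously. I would also remark (as the excerpt does for the symplectic subcase of Claim~\ref{main pt3}) that this gives Claims~\ref{main pt3} last sentence and \ref{main2 pt1} of Theorem~\ref{main thm 1}.
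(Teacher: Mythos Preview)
Your approach is correct in outline but takes a substantially longer route than the paper's. The paper bypasses Lemma~\ref{l1} entirely and argues directly from Proposition~\ref{t0} (transparency of $\varepsilon_n^\cG$), mimicking the proof of Proposition~\ref{t1}: if $\langle\varepsilon(A),\varepsilon(B)\rangle\subseteq|\varepsilon_n^\cS|\subseteq|\varepsilon_n^\cG|$, then by Proposition~\ref{t0} the points $A,B$ are collinear in $\cG_n$, i.e.\ $\rank(A\cap B)=n-1$; since $A\cap B$ is then a singular subspace of $\cS$ of rank $n-1$, the points $A$ and $B$ are already collinear in $\cS_n$. In the hermitian case the only extra step is that the $\KK_\sigma$-line $\langle\varepsilon(A),\varepsilon(B)\rangle$ is a Baer subline of a $\KK$-line $L$ of $\PG(\bigwedge^nV)$; since $|\varepsilon_n^\cG|$ is an intersection of quadrics and $L\cap|\varepsilon_n^\cG|$ contains at least three points, $L\subseteq|\varepsilon_n^\cG|$ and Proposition~\ref{t0} applies as before.

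Your route via Corollary~\ref{c00} buys a pleasing uniformity with the proof of Proposition~\ref{t0} itself, but at the cost of the two verifications you correctly flag as obstacles: the (S\ref{S2})-type identity $\varepsilon_n^\cS([A,B])=\langle\varepsilon_n^\cS([A,B])\rangle\cap|\varepsilon_n^\cS|$ on quads, and hypothesis (A\ref{A3}) for the hermitian embedding inside $\PG(W)$. Neither is needed in the paper's argument. On the second point specifically, your appeal to De~Bruyn~\cite{B1} does not quite deliver what you claim --- that reference is invoked in Section~\ref{3.2 Pluecker} only for the existence of the $\KK_\sigma$-space $W$, not for $|\varepsilon_n^\cS|$ being cut out by quadrics over $\KK_\sigma$. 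Both difficulties evaporate once you notice that Proposition~\ref{t0} already controls the ambient variety $|\varepsilon_n^\cG|$: rather than analyzing what $|\varepsilon_n^\cS|$ looks like on the span of a quad, you lift the line to $\PG(\bigwedge^nV)$ where transparency is known.
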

\begin{proof}
  We can obtain the conclusion either by Lemma~\ref{l1} with the help of Lemma~\ref{Prel0} and Corollary~\ref{Prel2}, or as a straightforward consequence of Proposition~\ref{t0}, by mimicking the proof
  of Proposition \ref{t1}. We choose the second way, which is easier.

  Suppose firstly that $f$ is alternating. Put $\varepsilon := \varepsilon_n^\cS$. Let $A, B$ be points of $\cS_n$ such that $\langle\varepsilon(A),\varepsilon(B)\rangle\subseteq|\varepsilon|$.
  Proposition~\ref{t0} forces $A$ and $B$ to be collinear in $\cG_n$, namely $\rank(A\cap B) = n-1$. However $A\cap B$ belongs to $\cS$. Hence $A$ and $B$ are collinear in $\cS_n$ too.

The same argument works when $f$ is hermitian, modulo a few minor modifications due to the fact that in this case the codomain of $\varepsilon_n^\cS$ is a Baer subgeometry of $\PG(\bigwedge^nV)$. Still with $\varepsilon = \varepsilon_n^\cS$, put $\hat{\varepsilon} := \varepsilon_n^\cG$. Suppose that $\langle\varepsilon(A),\varepsilon(B)\rangle\subseteq|\varepsilon|$. Then $\langle \varepsilon(A),\varepsilon(B)\rangle$ is a Baer subline of a line $L$ of $\PG(\bigwedge^nV)$. On the other hand, $|\varepsilon| \subseteq |\hat{\varepsilon}|$. Hence $L\cap |\hat{\varepsilon}|$ contains a Baer subline of $L$. However $|\varepsilon_n^\cG|$ is the intersection of a family of quadrics. Therefore, if a line of $\PG(\bigwedge^nV)$ contains at least three points of $|\hat{\varepsilon}|$, that line is fully contained in $|\hat{\varepsilon}|$. It follows that $L \subseteq |\hat{\varepsilon}|$. Hence $A$ and $B$ are collinear in $\cG_n$, by Proposition~\ref{t0}. As above, we obtain that $A$ and $B$ are collinear in $\cS_n$.
\end{proof}
Propositions~\ref{t0}, \ref{t1} and \ref{t1-bis} yield all claims of Theorem~\ref{main thm 1}.

\subsection{Proof of Theorem~\ref{main thm 2}}
\label{Sec5}

\subsubsection{The spin embedding $\varepsilon_\spin^+$ of the half-spin geometry $\mathrm{HS}(2n-1,\KK)$}\label{sec spin HS}

Given a field $\KK$, let $V = V(2^{n-1},\KK)$ and let $\varepsilon := \varepsilon^+_\spin:\Delta_n\rightarrow\PG(V)$ be the spin embedding of the $n$-grassmannian $\Delta_n$ of the building $\Delta = D_n(\KK)$ of type $D_n$ over $\KK$. Recall that $\Aut(\Delta_n)_\varepsilon = \Aut(\Delta_n)$.

\begin{lemma}\label{spin l1}
If either $\mathrm{char}(\KK) \neq 2$ or $\KK$ is infinite, then no plane section of $|\varepsilon|$ is a punctured plane.
\end{lemma}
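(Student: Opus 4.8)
The plan is to analyze which lines of $\PG(V)$ can lie inside the spinor variety $|\varepsilon|$ and, more refined, how a plane of $\PG(V)$ can meet $|\varepsilon|$. Since $\varepsilon$ is the spin embedding of the half-spin geometry $\Delta_n$, every line of $\Delta_n$ maps onto a projective line contained in $|\varepsilon|$; conversely one needs to understand lines of $\PG(V)$ meeting $|\varepsilon|$ in at least three points. When $\ch(\KK)\neq 2$ we may invoke the fact, recalled in Section~\ref{3.2 Spin}, that $|\varepsilon|$ is cut out by quadratic equations: a line meeting a quadric in three points lies on the quadric, so any line of $\PG(V)$ meeting $|\varepsilon|$ in at least three points is entirely contained in $|\varepsilon|$. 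This immediately forbids punctured planes: if $\alpha\cap|\varepsilon|$ were a plane minus a point $P$, then picking two lines of $\alpha$ through $P$, each would contain infinitely many (at least three — here we also use $\KK$ infinite, or the usual care with small fields) points of $|\varepsilon|$ hence lie in $|\varepsilon|$, forcing $P\in|\varepsilon|$, a contradiction. So the quadratic-generation argument handles the case $\ch(\KK)\neq 2$, and more generally any case in which $|\varepsilon|$ is an intersection of quadrics.

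When $\ch(\KK)=2$ the variety $|\varepsilon|$ need not be an intersection of quadrics, so a different argument is needed, and here the hypothesis that $\KK$ is infinite enters essentially. First I would reduce to a statement about a plane $\alpha$ with $\alpha\cap|\varepsilon|$ large: if $\alpha\cap|\varepsilon|$ is a punctured plane, then in particular $\alpha\cap|\varepsilon|$ contains three non-collinear points of $|\varepsilon|$, and every line of $\alpha$ that contains two points of $|\varepsilon|$ meets $|\varepsilon|$ in all but at most one of its points. The strategy is to translate back to the geometry $\Delta_n$: points of $|\varepsilon|$ are points of $\Delta_n$ (as $\varepsilon$ is injective with support $|\varepsilon|$), and a projective line $L\subseteq|\varepsilon|$, or even a line $L$ with $|L\cap|\varepsilon||\geq 3$, should be recognized as a line of $\Delta_n$ — here one uses that in the half-spin geometry two collinear points determine their line and that the lines of $\Delta_n$ are exactly the maximal singular subspaces (of the collinearity relation induced by $\varepsilon$) of a certain shape, together with the transparency of $\varepsilon$ established in Claim~\ref{main2 pt3} of Theorem~\ref{main thm 2} (whose proof via Lemma~\ref{l1} will in turn rely on this very lemma only in the residual/inductive step, so care must be taken to avoid circularity — most likely the transparency is proved independently using Lemma~\ref{l1} with a direct verification of (A\ref{A3}) in a residue of rank $3$, i.e.\ for $\mathrm{HS}(5,\KK)\cong\PG(3,\KK)$ via triality, where the spinor variety is the Klein quadric and the punctured-plane obstruction is classical).

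Concretely, for $\ch(\KK)=2$ I would argue as follows. Suppose $\alpha\cap|\varepsilon|=\alpha\setminus\{P\}$. Fix a point $p=\varepsilon^{-1}(Q)$ with $Q\in\alpha$, $Q\neq P$; the set of points $x$ of $\Delta_n$ with $\varepsilon(x)$ lying on a fixed line of $\alpha$ through $Q$ other than $\langle P,Q\rangle$ is, after removing at most one point, in bijection with a full line of $\PG(V)$ inside $|\varepsilon|$, hence (by transparency / the recognition of lines) is a line of $\Delta_n$ through $p$. Thus $p$ is collinear in $\Delta_n$ with a large family of points whose images fill up $\alpha\setminus\langle P,Q\rangle$, and these points, together with their connecting lines, lie in a convex subspace of $\Delta_n$ of small rank (a residue), whose spin embedding spans only a small projective subspace of $\PG(V)$ — in fact at most a $\PG(3,\KK)$ (a quad of $\Delta_n$, which is a grid, or a symplectic quadrangle in the induced geometry) or a $\PG(7,\KK)$ in the worst case. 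One then checks that the induced embedding of that small residue into its projective span is an intersection of quadrics (true for the Klein quadric $\mathrm{Q}^+(5,\KK)$ and for the spinor variety of $\mathrm{HS}(7,\KK)$, which is again a quadric), so the punctured-plane obstruction is excluded there by the $\ch(\KK)\neq2$-style argument applied inside that span — the point $P$ would have to lie in the variety.

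The main obstacle I anticipate is the characteristic~$2$ case: ruling out that $|\varepsilon|$, although not an intersection of quadrics globally, could nonetheless contain a punctured plane. The delicate point is to show that a plane $\alpha$ with $\alpha\cap|\varepsilon|$ ``almost full'' must be contained in the span of a bounded-rank residue of $\Delta_n$ — i.e.\ that the three non-collinear points of $\alpha\cap|\varepsilon|$ are pairwise at distance at most $2$ in $\Delta_n$ (so that they and all points on their connecting lines lie in a single quad or at worst a rank-$3$ residue). If instead the three points were pairwise at larger distance, transparency of $\varepsilon$ (already giving $\chi_\varepsilon=0$) would say the lines $\langle\varepsilon(x),\varepsilon(y)\rangle$ are \emph{not} in $|\varepsilon|$, contradicting that they meet $|\varepsilon|$ in all but one point — so this is where the already-established transparency of the spin embedding does the heavy lifting, and the infiniteness of $\KK$ guarantees ``all but one point'' is genuinely more than two points. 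Once localized to a bounded residue whose spinor variety is a quadric, the classical plane-section argument finishes the proof.
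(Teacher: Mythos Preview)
Your treatment of the case $\mathrm{char}(\KK)\neq 2$ is essentially the paper's: since $|\varepsilon|$ is an intersection of quadrics, its plane sections are intersections of conics, and such a section can never be a punctured plane. (Your worry about small fields is unnecessary here: a nonzero quadratic form on a projective plane vanishes on at most $2q+1$ points, strictly fewer than the $q^2+q$ points of a punctured plane for every $q\geq 2$; so any quadric containing a punctured plane contains the whole plane.)

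For $\mathrm{char}(\KK)=2$ your proposal has a genuine gap. You try to invoke the transparency of $\varepsilon$, but in the paper's logical structure this lemma is used (via Corollary~\ref{spin l1 cor} and then Lemma~\ref{cI}) to \emph{prove} transparency (Proposition~\ref{spin HS}); your suggested escape from circularity --- verifying (A\ref{A3}) only in a rank-$3$ residue --- does not match how the argument is actually organized, and in any case your reduction ``three non-collinear points of $\alpha\cap|\varepsilon|$ lie in a bounded-rank residue'' is exactly what would need transparency-type information to justify. So the argument as written is circular.

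The paper's argument for $\mathrm{char}(\KK)=2$ with $\KK$ infinite is far simpler and uses none of this geometry: $|\varepsilon|$ is an algebraic set, hence Zariski-closed, so $\alpha\cap|\varepsilon|$ is closed in $\alpha$; a punctured plane is the complement of a point, hence open; but a projective plane over an infinite field is irreducible (in particular connected) for the Zariski topology, so no proper nonempty subset is both open and closed. That is the whole proof in this case --- no recourse to transparency, residues, or the combinatorics of $\Delta_n$ is needed.
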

\begin{proof} As remarked in Section~\ref{3.2 Spin}, the spinor variety $|\varepsilon|$ is an algebraic set. Moreover, if $\mathrm{char}(\KK) \neq 2$ then $|\varepsilon|$ can be obtained as an intersection of quadrics. A plane section of an intersection of quadrics is never a punctured plane.

  Let $\mathrm{char}(\KK) = 2$. Perhaps $|\varepsilon|$ is an intersection of quadrics also in this case, but we could not find this information in the available literature on spinor varieties.
  According to the literature we are aware of, when $\mathrm{char}(\KK) = 2$ we can only claim that $|\varepsilon|$ is an algebraic set. However, when $\KK$ is infinite this is enough to conclude. Indeed, since $|\varepsilon|$ is an algebraic set, given a projective plane $\alpha$ of $\PG(V)$, the intersection $\alpha\cap|\varepsilon|$ is a closed set for the Zariski topology on $\alpha$. On the other hand, a punctured plane is the complement of a point. Singletons are closed sets in the Zariski topology. Theirs complements are open. Thus, if the section $\alpha\cap|\varepsilon|$ is a punctured plane then it is both closed and open in the Zariski topology of $\alpha$. Consequently, the projective plane $\alpha$ would not be connected for the Zariski topology. However, it is well known that every projective space defined over an infinite field is connected for the Zariski topology. It follows that $\KK$ is finite.
\end{proof}

\begin{corollary}\label{spin l1 cor}
Under the hypotheses of Lemma \ref{spin l1}, the embedding $\varepsilon$ admits a tight degree of opacity.
\end{corollary}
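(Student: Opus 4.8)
The plan is to derive Corollary~\ref{spin l1 cor} directly from Lemma~\ref{l1} (the general opacity lemma) applied to $\Gamma = \Delta_n$ and the spin embedding $\varepsilon = \varepsilon^+_\spin$, since that lemma concludes precisely $\chi^\uparrow_\varepsilon = \chi^\downarrow_\varepsilon$, i.e.\ the existence of a tight degree of opacity. So the entire task reduces to checking that the three hypotheses (A\ref{A1}), (A\ref{A2}), (A\ref{A3}) of Lemma~\ref{l1} hold under the standing assumption that either $\mathrm{char}(\KK)\neq 2$ or $\KK$ is infinite.

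First I would verify (A\ref{A1}): this is immediate from Lemma~\ref{Prel0}, which states that property (A\ref{A1}) holds in $\Delta_k$ for every type $k$, hence in particular for $\Delta_n$. Second, for (A\ref{A2}) I would invoke Corollary~\ref{Prel2}, which gives distance-transitivity of $\Aut(\Delta)_k$ on the collinearity graph of $\Delta_k$ provided conditions (C\ref{C1})--(C\ref{C3}) hold; and the paragraph closing Section~\ref{Prel} records that these conditions do hold for buildings of type $D_n$ with $k\in\{n-1,n\}$, exhibiting the explicit chain $J_1\subset\cdots\subset J_\delta$. Since $\Aut(\Delta_n)_\varepsilon = \Aut(\Delta_n) = \Aut(\Delta)_n$ (the spin embedding is homogeneous, as recalled at the start of Section~\ref{sec spin HS}), hypothesis (A\ref{A2}) of Lemma~\ref{l1} follows. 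Third, for (A\ref{A3}) I would simply cite Lemma~\ref{spin l1}, which says exactly that no plane section of $|\varepsilon|$ is a punctured plane under the stated hypotheses on $\KK$.

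Having checked all three hypotheses, Lemma~\ref{l1} yields $\chi^\uparrow_\varepsilon = \chi^\downarrow_\varepsilon$, which by definition means $\varepsilon$ admits a tight degree of opacity $\chi_\varepsilon := \chi^\uparrow_\varepsilon = \chi^\downarrow_\varepsilon$. This completes the proof. There is no real obstacle here: the corollary is purely a matter of assembling previously established pieces, and the only subtle point — the plane-section condition (A\ref{A3}) — has already been handled in Lemma~\ref{spin l1}, which is precisely why that lemma was stated with the same hypotheses on $\KK$. I would write the proof in just two or three lines, citing Lemma~\ref{Prel0}, Corollary~\ref{Prel2}, Lemma~\ref{spin l1}, and Lemma~\ref{l1} in that order.

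\begin{proof}
Property (A\ref{A1}) of Lemma~\ref{l1} holds in $\Delta_n$ by Lemma~\ref{Prel0}. As recalled at the end of Section~\ref{Prel}, conditions (C\ref{C1}), (C\ref{C2}) and (C\ref{C3}) hold for buildings of type $D_n$ with $k = n$; hence, by Corollary~\ref{Prel2}, the group $\Aut(\Delta)_n$ acts distance-transitively on the collinearity graph of $\Delta_n$. Since $\Aut(\Delta_n)_\varepsilon = \Aut(\Delta_n) = \Aut(\Delta)_n$, property (A\ref{A2}) of Lemma~\ref{l1} holds for $\varepsilon$. Finally, under the present hypotheses on $\KK$, Lemma~\ref{spin l1} guarantees that no plane section of $|\varepsilon|$ is a punctured plane, so (A\ref{A3}) of Lemma~\ref{l1} also holds. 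By Lemma~\ref{l1} we obtain $\chi^\uparrow_\varepsilon = \chi^\downarrow_\varepsilon$; that is, $\varepsilon$ admits a tight degree of opacity.
\end{proof}
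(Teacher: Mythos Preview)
Your proof is correct and follows essentially the same approach as the paper: verify (A\ref{A1}) via Lemma~\ref{Prel0}, verify (A\ref{A2}) via Corollary~\ref{Prel2} together with the homogeneity $\Aut(\Delta_n)_\varepsilon = \Aut(\Delta_n) = \Aut(\Delta)_n$, verify (A\ref{A3}) via Lemma~\ref{spin l1}, and then apply Lemma~\ref{l1}. The paper's proof is virtually identical, citing the same results in the same order.
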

\begin{proof} Condition (A\ref{A1}) of Lemma \ref{l1} holds in $\Delta_n$ by Lemma~\ref{Prel0}. Conditions (C\ref{C1}), (C\ref{C2}), (C\ref{C3}) of Section~\ref{Prel} hold for $\Delta_n$. Hence hypothesis (A\ref{A2})
  of Lemma \ref{l1} also holds, by Corollary \ref{Prel2} and since $\Aut(\Delta_n)_\varepsilon = \Aut(\Delta_n) =  \Aut(\Delta)_n$,
see also \cite[Chapter 9]{DRG}. Hypothesis (A\ref{A3}) of Lemma~\ref{l1} holds by Lemma~\ref{spin l1}.
  The conclusion follows from Lemma~\ref{l1}.
\end{proof}
Let $\mathfrak{S}$ be the family of the $n$-grassmannians $R_n := R\cap \Delta_n$ of the residues $R$ of $\Delta$ of type $\{n-3, n-2, n-1, n\}$. Let $S\in\mathfrak{S}$. It is clear from the diagram
 \begin{center}
\begin{tikzpicture}(50,7)
  \tikzstyle{every node}=[draw,circle,fill=black,minimum size=2mm,
label distance=4pt, inner sep=2pt]
 \draw
  (4.5,0) node[label={[yshift=-0.25cm]{$n-3$}}](n-3) {}
  (6,0) node[label={[yshift=-0.25cm]$n-2$}](n-2) {}
  (8,0.8) node[label=right:$ n-1$](n-1) {}
  (8,-0.8) node[label=right:$n$](n) {};

 \draw (n-3)--(n-2)--(n-1);
 \draw (n-2)--(n);
 \draw [loosely dotted] (1.0,0)--(n-3);
 \draw (n) circle [radius=0.3cm];
\end{tikzpicture}
\end{center}
that $S\cong \mathrm{Q}^+(7,\KK)$ and that when $\varepsilon_S$ is the embedding induced by $\varepsilon$ on $S$, $\langle|\varepsilon_S|\rangle\cong \PG(7,\KK)$ and $\varepsilon_S$ is isomorphic to the natural embedding of $\mathrm{Q}^+(7,\KK)$ in $\PG(7,\KK)$. In particular, $\varepsilon_S$ is transparent.

\begin{lemma}\label{spin l2}
Properties (S\ref{S1}) and (S\ref{S2}) of Section~\ref{Sec 2.2} hold for $\mathfrak{S}$.
\end{lemma}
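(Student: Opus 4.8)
The plan is to verify properties (S1) and (S2) separately for the family $\mathfrak{S}$ of $n$-grassmannians $R_n = R\cap\Delta_n$ of the residues $R$ of $\Delta = D_n(\KK)$ of type $\{n-3,n-2,n-1,n\}$. For (S1), first I would recall that the residues of $\Delta$ of a fixed cotype form a single orbit under $\Aut(\Delta)_n$: since $\Aut(\Delta)_n$ is the stabilizer of the type $n$ and acts on $\Delta$ as a chamber-transitive (indeed flag-transitive) group of automorphisms preserving the type $n$, it permutes transitively the flags of type $\{1,\ldots,n-4\} = (\{n-3,n-2,n-1,n\})^\sim$, hence transitively the residues $R$ of type $\{n-3,n-2,n-1,n\}$, and correspondingly transitively the members $R_n$ of $\mathfrak{S}$. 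The fact that this $\mathfrak{S}$ is stabilized is immediate from the same observation. Since $\Aut(\Delta_n)_\varepsilon = \Aut(\Delta_n) = \Aut(\Delta)_n$ (as recalled in Section~\ref{sec spin HS}), this gives (S1).

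For (S2) I must show, for $S = R_n \in\mathfrak{S}$, that $|\varepsilon_S| = \Sigma_S\cap|\varepsilon|$, where $\Sigma_S = \langle\varepsilon(S)\rangle$. One inclusion $|\varepsilon_S|\subseteq \Sigma_S\cap|\varepsilon|$ is trivial. For the converse, the key point is that $\varepsilon_S$ is (isomorphic to) the natural embedding of $\mathrm{Q}^+(7,\KK)$ in $\Sigma_S\cong\PG(7,\KK)$, as stated just before the lemma, so $|\varepsilon_S|$ is the hyperbolic quadric $\mathrm{Q}^+(7,\KK)$ inside $\PG(7,\KK)$. Thus I need: every point of $|\varepsilon|$ lying in $\Sigma_S$ is an $\varepsilon$-image of a point of $S$, i.e.\ the spinor variety $|\varepsilon|$ meets the $7$-subspace $\Sigma_S$ exactly in that $\mathrm{Q}^+(7,\KK)$. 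The natural route is to use the residue structure of the building: a point $x$ of $\Delta_n$ with $\varepsilon(x)\in\Sigma_S$ must, I claim, already be an element of $R$; equivalently, the preimage $\varepsilon^{-1}(\Sigma_S\cap|\varepsilon|)$ is contained in $P_n(R)$. This can be extracted from the standard description of the spin embedding via half-spin representations: the subspace $\Sigma_S$ is the span of the weight vectors indexed by the $n$-elements of $R$ (the residue $R$ corresponds to a Levi subgroup, and the $7$-dimensional subspace is exactly an irreducible half-spin module $V_0$ for the $D_4$-Levi inside the half-spin module $V$), and the coordinates of $\varepsilon(x)$ outside $\Sigma_S$ cannot all vanish unless $x\in R$. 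Alternatively, and perhaps more cleanly in the present framework, I would argue geometrically: if $\varepsilon(x)\in\Sigma_S\cap|\varepsilon|$ then, taking any point $p\in S$ at maximal distance from $x$ in $\Delta_n$ and considering a shortest path, one shows using Lemma~\ref{Prel1} that the convex hull $[p,x]$ (a residue of type $J_{d(p,x)}$) is forced to lie in $R$, because $\varepsilon$ restricted to $S$ already realizes all of $\mathrm{Q}^+(7,\KK)$ and any point outside $R$ would push $\varepsilon(x)$ outside $\Sigma_S$.

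The step I expect to be the main obstacle is precisely this last point: proving that $\varepsilon^{-1}(\Sigma_S\cap|\varepsilon|) = P_n(R)$, i.e.\ that no point of $\Delta_n$ outside $R$ has its spinor image in the $7$-space $\Sigma_S$. In a complete write-up the slickest argument is representation-theoretic: $\Sigma_S$ is a subspace spanned by a sub-family of the coordinate hyperplanes' complements corresponding to the weights supported on the $D_4$-Levi, and the spinor variety is cut out (at least set-theoretically, and in any characteristic as an algebraic set) by equations that force a point with all $\Sigma_S$-complementary coordinates zero to lie on the sub-spinor-variety of that Levi, which is exactly $\mathrm{Q}^+(7,\KK)$; I would cite Chevalley~\cite{Chevalley} (also Buekenhout--Cameron~\cite{BC}) for the compatibility of spin embeddings with residues. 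I would then conclude: for $S\in\mathfrak{S}$, any point of $\Sigma_S\cap|\varepsilon|$ equals $\varepsilon(x)$ for some $x\in\Delta_n$ with $\varepsilon(x)\in\Sigma_S$, hence $x\in R$, hence $x\in S$, so $\varepsilon(x)\in|\varepsilon_S|$; together with the trivial inclusion this gives $|\varepsilon_S| = \Sigma_S\cap|\varepsilon|$, establishing (S2) and completing the proof.
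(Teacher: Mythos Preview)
Your proposal is correct and lands on essentially the same argument as the paper: (S1) via flag-transitivity of the type-preserving automorphism group (recalling $\Aut(\Delta_n)_\varepsilon=\Aut(\Delta)_n$), and (S2) via the fact, attributed to Chevalley, that the equations cutting out the spinor variety $|\varepsilon|$ restrict to $\Sigma_S$ to give precisely the equations of the sub-spinor variety $|\varepsilon_S|\cong\mathrm{Q}^+(7,\KK)$. One small slip: you write $\{1,\ldots,n-4\}=(\{n-3,n-2,n-1,n\})^\sim$, but in the paper's notation $J^\sim$ is just $\{n-4\}$; the argument is unaffected, since transitivity on $(n-4)$-elements is all you need, and the geometric alternative you sketch can be dropped since the algebraic route already suffices.
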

\begin{proof}
Property (S\ref{S1}) follows from the flag-transitivity of the type-preserving automorphism group of $\Delta$. As for (S\ref{S2}), given $S\in \mathfrak{S}$, the equations that define $|\varepsilon_S|$ as an algebraic subset of $\langle|\varepsilon_S|\rangle$ are the restrictions to $\langle |\varepsilon_S|\rangle$ of those that define $|\varepsilon|$ (see \cite{Chevalley}, for instance). The equality $|\varepsilon_S| = \langle|\varepsilon_S|\rangle\cap|\varepsilon|$ follows.
\end{proof}

\begin{prop}\label{spin HS}
Under the hypotheses of Lemma \ref{spin l1}, the embedding $\varepsilon$ is transparent.
\end{prop}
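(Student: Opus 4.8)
The plan is to derive transparency of $\varepsilon = \varepsilon^+_\spin$ from Lemma~\ref{cI}, applied to the family $\mathfrak{S}$ of $n$-grassmannians $R_n = R\cap\Delta_n$ of the $\{n-3,n-2,n-1,n\}$-residues $R$ of $\Delta$ introduced just before Lemma~\ref{spin l2}. Every hypothesis of Lemma~\ref{cI} is already available. Properties (S\ref{S1}) and (S\ref{S2}) hold for $\mathfrak{S}$ by Lemma~\ref{spin l2}; conditions (C\ref{C1}), (C\ref{C2}), (C\ref{C3}) hold for $\Delta_n$ (type $D_n$, $k=n$), so $\Aut(\Delta_n)_\varepsilon = \Aut(\Delta_n) = \Aut(\Delta)_n$ acts distance-transitively on the collinearity graph of $\Delta_n$ by Corollary~\ref{Prel2}, which is (B\ref{B1}); and (B\ref{B2}) is exactly Corollary~\ref{spin l1 cor}.

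For (B\ref{B3}) one observes that a member $S\in\mathfrak{S}$ is the $n$-grassmannian of a building of type $D_4$, hence (via triality) $S\cong\mathrm{Q}^+(7,\KK)$, a polar space of rank $4$; so its collinearity graph has diameter $d_0 = 2 > 1$. As recalled before Lemma~\ref{spin l2}, $\varepsilon_S$ is isomorphic to the natural embedding of $\mathrm{Q}^+(7,\KK)$ in $\PG(7,\KK)$, which is transparent (being of orthogonal, not symplectic, type), so $\varepsilon_S$ has the tight degree of opacity $\chi_0 = 0 < 1 = d_0-1$. This is (B\ref{B3}), and Lemma~\ref{cI} then gives $\chi_\varepsilon = \chi_0 = 0$, i.e.\ $\varepsilon$ is transparent.

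One can reach the same conclusion more directly, in the spirit of the proof of Proposition~\ref{t0}: by Corollary~\ref{spin l1 cor} and Corollary~\ref{c00} it suffices to produce a single pair of points $p, q$ of $\Delta_n$ at distance $2$ with $\langle\varepsilon(p),\varepsilon(q)\rangle\not\subseteq|\varepsilon|$. By Lemma~\ref{Prel1} the convex hull $[p,q]$ equals $P_n(R_{p,q})$ for the unique $J_2$-residue $R_{p,q}$ through $p$ and $q$, and since $J_2 = \{n-3,n-2,n-1,n\}$ in this case, $[p,q]$ is a member $S\cong\mathrm{Q}^+(7,\KK)$ of $\mathfrak{S}$, on which $\varepsilon$ restricts to the transparent natural embedding $\varepsilon_S$. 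As $p$ and $q$ are non-collinear in the polar space $S$, the line $\langle\varepsilon(p),\varepsilon(q)\rangle\subseteq\langle\varepsilon(S)\rangle$ is a secant to the quadric $|\varepsilon_S| = \langle\varepsilon(S)\rangle\cap|\varepsilon|$ (the last equality being (S\ref{S2})), so it meets $|\varepsilon|$ in just $\{\varepsilon(p),\varepsilon(q)\}$ and is not contained in it.

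There is no substantive obstacle: the work was done in Lemmas~\ref{l1}, \ref{cI}, \ref{spin l1} and \ref{spin l2} and their corollaries, and this proposition is essentially their assembly. The only points that need (light) care are the bookkeeping facts that a $\{n-3,n-2,n-1,n\}$-residue of a $D_n$ building is a $D_4$ building and that this is precisely the residue type $J_2$ attached to pairs of points at distance $2$ in $\Delta_n$, together with the classical facts that $\mathrm{Q}^+(7,\KK)$ has collinearity diameter $2$ and a transparent natural embedding; the degenerate case $n = 3$, where $\Delta_n$ is a projective space and $\varepsilon$ an isomorphism, is trivial.
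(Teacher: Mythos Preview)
Your proposal is correct and follows essentially the same route as the paper: assemble Corollary~\ref{spin l1 cor} (for (B\ref{B2})), Lemma~\ref{spin l2} (for (S\ref{S1}) and (S\ref{S2})), and the transparency of the natural embedding of $\mathrm{Q}^+(7,\KK)$ (for (B\ref{B3})), then apply Lemma~\ref{cI}. You are simply more explicit than the paper in spelling out (B\ref{B1}) and the bookkeeping on $d_0$ and $\chi_0$; your second, direct argument via Corollary~\ref{c00} and Lemma~\ref{Prel1} is a pleasant variant in the style of the proof of Proposition~\ref{t0}, and your remark on the degenerate case $n=3$ is a harmless clarification the paper leaves implicit.
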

\begin{proof}
This statement immediately follows from Corollary~\ref{spin l1 cor}, Lemma~\ref{spin l2} and Lemma~\ref{cI}, by recalling that $\varepsilon_S$ is transparent for $S\in\mathfrak{S}$.
\end{proof}
The case where $\KK$ is a finite field of characteristic $2$ is not considered in Proposition \ref{spin HS} because, as we have explained in the proof of Lemma \ref{spin l1}, we could not find in the literature anything that clearly implies that hypothesis (A\ref{A3}) of Lemma \ref{l1} holds also in this case. Thus, in order to settle this case, we shall use a different argument.

\begin{prop}\label{spin HS bis}
Let $\KK = \GF(q)$, with $q$ a power of $2$. Then $\varepsilon$ is transparent.
\end{prop}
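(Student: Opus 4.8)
The plan is to argue by induction on the rank $n$. Write $\varepsilon=\varepsilon^+_\spin$ for the spin embedding of $\Delta_n=\mathrm{HS}(2n-1,\KK)$ into $\PG(V)$, $V=V(2^{n-1},\KK)$. First I would record a reformulation: since $\Aut(\Delta_n)_\varepsilon=\Aut(\Delta_n)$ acts distance-transitively on $\Delta_n$ (Corollary~\ref{Prel2}), the property ``$\langle\varepsilon(x),\varepsilon(y)\rangle\subseteq|\varepsilon|$'' depends only on $d(x,y)$, and $\varepsilon$ fails to be transparent if and only if there is some $k\ge 2$ with $\langle\varepsilon(p),\varepsilon(q)\rangle\subseteq|\varepsilon|$ for one --- hence, by distance-transitivity, every --- pair $p,q$ at distance $k$ (a line of $\PG(V)$ that is contained in $|\varepsilon|$ and joins two collinear points of $\Delta_n^\varepsilon$ is itself a line of $\Delta_n^\varepsilon$). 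For $n=4$ the geometry $\mathrm{HS}(7,\KK)$ coincides with the polar space $\mathrm{Q}^+(7,\KK)$ and $\varepsilon$ is its natural embedding into $\PG(7,\KK)$ (recalled just before Lemma~\ref{spin l2}); since a projective line meeting this quadric in at least three points lies on it and is then a line of the polar space, $\varepsilon$ is transparent. This will be the base of the induction.

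For the inductive step take $n\ge 5$, assume the result in all smaller ranks, and fix $p,q$ at distance $k$, $2\le k\le\lfloor n/2\rfloor$, with $\langle\varepsilon(p),\varepsilon(q)\rangle\subseteq|\varepsilon|$. If $2k\le n-1$ I would reduce to a smaller half-spin geometry via Section~\ref{Prel}: by Lemma~\ref{Prel1} the convex hull $[p,q]$ equals $P_n(R_{p,q})$, where $R_{p,q}$ is the residue of $\Delta$ of type $J_k=\{n-2k+1,\dots,n\}$, a building of type $D_{2k}$; hence $S:=(R_{p,q})_n\cong\mathrm{HS}(4k-1,\KK)$ has rank $2k\le n-1$ and $p,q$ are opposite --- so at distance $k\ge 2$ --- in $S$. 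As in the proof of Lemma~\ref{spin l2}, $|\varepsilon_S|$ is cut out inside $\langle\varepsilon(S)\rangle$ by the restrictions of the equations that define $|\varepsilon|$ (Chevalley~\cite{Chevalley}), so $\langle\varepsilon(S)\rangle\cap|\varepsilon|=\varepsilon(S)$; as $\langle\varepsilon(p),\varepsilon(q)\rangle\subseteq\langle\varepsilon(S)\rangle$, the line lies in $\varepsilon(S)$, i.e. $\langle\varepsilon_S(p),\varepsilon_S(q)\rangle\subseteq|\varepsilon_S|$, contradicting the transparency of $\varepsilon_S$ (inductive hypothesis). This disposes of every $k$ when $n$ is odd, and of every $k<n/2$ when $n$ is even.

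The remaining, and hardest, case is $2k=n$ --- so $n$ even, $k=n/2$, and $p,q$ a pair of opposite elements of $\Delta$, for which $R_{p,q}=\Delta$ and no residue reduction helps. This is exactly where the gap left open in Proposition~\ref{spin HS} bites: in characteristic $2$ we may not assume that $|\varepsilon|$ is an intersection of quadrics. Here I would instead use the explicit Fock-space model of the spin representation. Using that $\Aut(\Delta_n)$ is transitive on pairs of opposite points, take $p=W$ and $q=W^*$ to be complementary coordinate maximal singular subspaces of the underlying orthogonal space, so that $V=\bigwedge^{\mathrm{even}}W^*$ with $\varepsilon(p)=\langle 1\rangle$ and $\varepsilon(q)=\langle\omega\rangle$ for a generator $\omega$ of $\bigwedge^{n}W^*$ (Chevalley~\cite{Chevalley}). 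A point of $\langle\varepsilon(p),\varepsilon(q)\rangle$ has the form $\langle\mu\cdot 1+\nu\,\omega\rangle$ with all middle-degree components zero; if it is a pure spinor with $\mu\ne 0$, the associated maximal singular subspace is transverse to $W^*$, hence the graph of some alternating form $\psi\in\bigwedge^2 W^*$, and the spinor is proportional to the divided-power exponential $\sum_{j\ge 0}\psi^{[j]}=1+\psi+\cdots$, a description valid in every characteristic; vanishing of its degree-$2$ part forces $\psi=0$ and hence $\nu=0$. Thus $\langle\varepsilon(p),\varepsilon(q)\rangle\cap|\varepsilon|=\{\varepsilon(p),\varepsilon(q)\}$ has only two points, whereas the line has $q+1\ge 3$ points --- the desired contradiction. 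The delicate points to nail down are the characteristic-$2$ Fock-space description of $\varepsilon^+_\spin$ together with the parametrization of pure spinors in the big cell $\{s_0\ne 0\}$ of the spinor variety, and the fact that the restriction-of-equations argument of Lemma~\ref{spin l2} applies to residues of type $D_{2k}$ and not merely to the $\mathrm{Q}^+(7,\KK)$-quads.
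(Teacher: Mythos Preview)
Your proof is correct and shares the same inductive scaffolding as the paper's: base case $n=4$ via the natural embedding of $\mathrm{Q}^+(7,q)$, reduction of the case $d(p,q)=k<\lfloor n/2\rfloor$ (and all $k$ when $n$ is odd) to a proper $D_{2k}$-residue via convex hulls and the identity $|\varepsilon_S|=\langle\varepsilon(S)\rangle\cap|\varepsilon|$, leaving only the case $n$ even and $p,q$ opposite.

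Where you and the paper diverge is precisely in that critical case. The paper argues group-theoretically: the unipotent radical $U$ of the stabilizer $G_P$ is abelian, isomorphic to the additive group $\mathrm{AM}_n(q)$ of alternating matrices, and acts regularly on $\mathrm{Far}(P)$; the hypothetical family of secant lines through $\varepsilon(P)$ would partition $\mathrm{Far}(P)$ into classes of size $q$, and the class-wise stabilizer $U_0\leq U$ would then be a subgroup of order $q$ normalized by the Levi complement $C\cong\GL(n,q)$ acting on $\mathrm{AM}_n(q)$ by $A\mapsto MAM^T$ --- but no such $C$-invariant proper nontrivial subgroup exists. You instead compute directly in the Fock-space model $\bigwedge^{\mathrm{even}}W^*$: a point $\langle\mu\cdot 1+\nu\,\omega\rangle$ on the line joining the two coordinate opposite spinors, if pure with $\mu\neq 0$, must equal a divided-power exponential $1+\psi+\psi^{[2]}+\cdots$, and vanishing of the degree-$2$ component forces $\psi=0$.

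Both arguments are sound. Yours is more geometric and self-contained, and visibly works over any field (the paper remarks that its own argument does too, with minor modifications). The paper's group-theoretic approach, on the other hand, is what gets recycled almost verbatim for the companion result on $\mathrm{DQ}(2n,q)$ in characteristic $2$ (Proposition~\ref{spin DQ bis}), where the unipotent radical becomes $\mathrm{AM}_n(q)\cdot V(n,q)$; your Fock-space computation does not transfer to that setting as directly. The two points you flag as delicate --- the characteristic-free validity of the divided-power exponential parametrization of the big cell, and the extension of the restriction-of-equations argument from $D_4$-residues to $D_{2k}$-residues --- are genuine but standard, and the paper itself invokes the latter without further comment.
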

\begin{proof}
We argue by induction on $n$. If $n = 4$ then $\varepsilon$ is the natural embedding of $\Delta_n = \mathrm{Q}^+(7, q)$ in $\PG(7,q)$. In this case there is nothing to prove.

Let $n > 4$. As in the last paragraph of Section \ref{Prel}, put $\delta = \lfloor n/2\rfloor = \diam(\Delta_n)$ and
\[J_i ~ = ~ \{n-2i+1,~ n-2i+2, \ldots, n-1, n\}\]
for $i = 2, 3,\ldots, \delta$. Put also $r = \delta$ if $n$ is even and $r = \delta+1$ if $n$ is odd, with the additional definition $J_r = \{1, 2,\ldots, n\}$. For $i = 2, 3,\ldots, r$ let $\mathfrak{R}_i$ be the family of $J_i$-residues of $\Delta$ and $\mathfrak{S}_i$ the family of the $n$-grassmannians $R\cap \Delta_n$ for $R\in \mathfrak{R}_i$. Note that $\mathfrak{R}_r = \{\Delta\}$ and $\mathfrak{S}_r = \{\Delta_n\}$.

Recall that, for every $i = 2, 3,\ldots, \delta$ the elements of $\mathfrak{S}_i$ are the convex closures of pairs of points of $\Delta_n$ at distance $i$. If $S\in \mathfrak{S}_i$ then $S \cong \mathrm{HS}(2n_i-1,q)$ where $n_i = n-2i$ and the embedding $\varepsilon_S$ induced by $\varepsilon$ on $S$ is isomorphic to the spin embedding of $\mathrm{HS}(2n_i-1,q)$ in $\PG(2^{n_i-1}-1,q)$. Moreover, $|\varepsilon_S| = \langle|\varepsilon_S|\rangle\cap |\varepsilon|$ (see the proof of Lemma \ref{spin l2}, where the special case $i = 2$ is considered.)

By the inductive hypothesis, if $i < r$ then $\varepsilon_S$ is transparent. By way of contradiction, suppose that $\langle\varepsilon(P),\varepsilon(P')\rangle\subseteq |\varepsilon|$ for two non-collinear points $P$ and $P'$ of $\Delta_n$.  As the convex closure of two points at distance  $i <  r$ is a member of $\mathfrak{S}_i$ and $|\varepsilon_S| = \langle|\varepsilon_S|\rangle\cap |\varepsilon|$ for every $S\in \mathfrak{S}_i$, all points of $\varepsilon^{-1}(\langle\varepsilon(P),\varepsilon(P')\rangle$ have mutually maximal distance. In particular, $d(P,P') = \delta$. Moreover, as the convex closure of two points at distance $\delta$ is a member of $\mathfrak{S}_\delta$ and $\delta < r$ if $n$ is odd, the integer $n$ must be even.
To sum up, the points $X$ of $\Delta_n$ which are not collinear with $P$ and such that $\langle\varepsilon(P),\varepsilon(P')\rangle\subseteq |\varepsilon|$ are contained in the set $\mathrm{Far}(P)$ of the points of $\Delta_n$ at distance $\delta$ from $P$. As $n$ is even, they are the elements of $\Delta$ opposite to $P$.

Let $G$ be the derived group of $\Aut(\Delta)_n = \Aut(\Delta)_\varepsilon$ and let $G_P$ be the stabilizer of $P$ in $G$. It is well known that $G_P$ is transitive on $\mathrm{Far}(P)$. More explicitly, let $U$ be the unipotent radical of $G_P$. Then $U$ acts regularly on $\mathrm{Far}(P)$ (see e.g.~Weiss~\cite[Chapter 11]{Weiss}). Thus, $|\mathrm{Far}(P)| = |U| = q^{n(n-1)/2}$ and the sets $\varepsilon^{-1}(\langle \varepsilon(P),\varepsilon(X)\rangle)\setminus\{P\}$ with $X\in \mathrm{Far}(P)$ form a partition $\pi$ of $\mathrm{Far}(P)$ in $q^{n(n-1)/2-1}$ classes of size $q$. The group $U$ is abelian. In fact,
with a suitable choice of the form
$q$ and $P$,  $U$ can be described as the
group of all matrices of the form
$\begin{pmatrix} I_n & A \\ 0 & I_n\end{pmatrix}$
where $A$ is an arbitrary alternating matrix, as the reader can check. So $U$
it is isomorphic to the additive group $\mathrm{AM}_n(q)$ of $n\times n$ alternating matrices over $\GF(q)$.

Let $u_0\in U$ be such that $\varepsilon(P)\in \langle\varepsilon(u_0(P')),\varepsilon(P')\rangle$ for some $P'\in \mathrm{Far}(P)$. Given any $X\in \mathrm{Far}(P)$, let $u$ be the element of $U$ mapping $P'$ onto $X$. Then $uu_0(P') = u_0u(P') = u_0(X)$, as $U$ is abelian. Therefore
\[\varepsilon(P)  = \varepsilon(u(P)) \in u^\varepsilon(\langle\varepsilon(u_0(P')),\varepsilon(P')\rangle) = \langle\varepsilon(uu_0(P')),\varepsilon(u(P'))\rangle = \langle \varepsilon(u_0(X)),\varepsilon(X)\rangle,\]
where $u^\varepsilon$ is the lifting of $u$ to $\PG(V)$ through $\varepsilon$. In short, if $P'$ and $u_0(P')$ belong to the same class of $\pi$, then the same holds for $X$ and $u_0(X)$, for any $X\in \mathrm{Far}(P)$. By this and the regularity of $U$ on $\mathrm{Far}(P)$ it follows that the stabilizer $U_0$ of all classes of $\pi$ acts regularly on each of those classes. Consequently, $|U_0| = q$.

Let now $C \cong \mathrm{GL}(n,q)$ be a Levi complement of $U$ in $G_P$ (see Carter \cite[Section 8.5]{Carter}). The group $C$ also acts on $\mathrm{Far}(P)$ and stabilizes $\pi$. Since $C$ normalizes $U$, it also normalizes the class-wise stabilizer $U_0$ of $\pi$. However, the action of $C$ on $U$ is isomorphic to the adjoint action of $\mathrm{GL}(n,q)$ on $\mathrm{AM}_n(q)$: for $M\in \mathrm{GL}(n,q)$ and $A\in \mathrm{AM}_n(q)$, the matrix $M$ maps $A$ onto $MAM^T$.

For $0 \leq r \leq \lfloor n/2\rfloor$ let ${\cal A}_r$ be the set of matrices $A\in \mathrm{AM}_n(q)$ such that $\mathrm{rank}(A) = 2r$. It is well known that ${\cal A}_r$ is an orbit of $\mathrm{GL}(n, q)$ in its adjoint action on $\mathrm{AM}_n(q)$. Moreover, ${\cal A}_r$ generates the additive group of $\mathrm{AM}_n(q)$, for every $r = 1, 2,\ldots, \lfloor n/2\rfloor$. It follows that $C$ cannot normalize any non-trivial proper subgroup $U_0$ of $U$.
 \end{proof}
Propositions \ref{spin HS} and \ref{spin HS bis} yield claim 1 of Theorem \ref{main thm 2}.

\begin{remark}
The proof of Proposition \ref{spin HS bis} also works for any field $\KK$, modulo a few obvious minor modifications. Thus, we could fuse Propositions \ref{spin HS} and \ref{spin HS bis} in one statement, to be proved by the same arguments used for Proposition \ref{spin HS bis}. However, while the proof of Proposition \ref{spin HS} can be recycled so as to work for $\mathrm{DQ}(2n,\KK)$ too (see the next subsection, Proposition \ref{spin DQ} and its proof), the proof of Proposition \ref{spin HS bis} can be modified for the same purpose only when $\KK$ is a finite field of even order (see below, Remark \ref{spin DQ rem}). That's why we have preferred to state and prove Propositions \ref{spin HS} and \ref{spin HS bis} separately.
\end{remark}

\subsubsection{The spin embedding $\varepsilon_\spin$ of the dual polar space $\mathrm{DQ}(2n,\KK)$}\label{sec spin DQ}

Let $\Delta = D_{n+1}(\KK)$, $V = V(2^n,\KK)$ and let $\varepsilon^+_\spin$ be the spin embedding of $\Delta_{n+1}$ in $\PG(V)$. As explained in Section \ref{3.2 Spin}, the embedding $\varepsilon^+_\spin$ induces the spin embedding $\varepsilon_\spin$ of $\mathrm{DQ}(2n,\KK)$ in $\PG(V)$. We recall that $|\varepsilon_\spin| = |\varepsilon^+_\spin|$.

\begin{prop}\label{spin DQ}
If either $\mathrm{char}(\KK) \neq 2$ or $\KK$ is infinite, then $\varepsilon_{\spin}$ is tightly $1$-opaque.
\end{prop}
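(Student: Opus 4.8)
The plan is to compare $\varepsilon_\spin$ on $\mathrm{DQ}(2n,\KK)$ with the spin embedding $\varepsilon^+_\spin$ of the ambient half-spin geometry $\Delta_{n+1}=\mathrm{HS}(2n+1,\KK)$, exploiting that $\mathrm{DQ}(2n,\KK)$ has the same point-set as $\Delta_{n+1}$ and the same support $|\varepsilon_\spin|=|\varepsilon^+_\spin|$, while its lines form a proper subfamily of those of $\Delta_{n+1}$. By Proposition~\ref{spin HS} (or Proposition~\ref{spin HS bis}), $\varepsilon^+_\spin$ is transparent, so a projective line of $\PG(V)$ contained in $|\varepsilon^+_\spin|$ is always the $\varepsilon^+_\spin$-image of a line of $\Delta_{n+1}$. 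Thus the whole question reduces to understanding, among lines $\ell$ of $\Delta_{n+1}$, which ones are lines of $\mathrm{DQ}(2n,\KK)$ and which are not: a line $\ell$ of $\Delta_{n+1}$, i.e.\ an $(n-1)$-element $X$ of $\Delta=D_{n+1}(\KK)$, is a line of $\cS_n$ exactly when $X\subseteq H$ (the non-singular hyperplane defining $\cS$, see Subsection~\ref{DPP1}), and otherwise $Y:=X\cap H$ is an $(n-2)$-element determining a quad $Q(Y)\cong\mathrm{W}(3,\KK)$ with $X$ appearing as a hyperbolic line of $Q(Y)$.

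First I would establish the upper bound $\chi^\downarrow_{\varepsilon_\spin}\le 1$: if $P,P'$ are points of $\cS_n$ with $\langle\varepsilon_\spin(P),\varepsilon_\spin(P')\rangle\subseteq|\varepsilon_\spin|=|\varepsilon^+_\spin|$, then by transparency of $\varepsilon^+_\spin$ the points $P,P'$ are collinear in $\Delta_{n+1}$, hence lie on a common line $X$ (an $(n-1)$-element). If $X\subseteq H$ then $P,P'$ are collinear in $\cS_n$; if $X\not\subseteq H$ then $P,P'$ lie in the quad $Q(Y)\cong\mathrm{W}(3,\KK)$, so $d_{\cS_n}(P,P')\le 2$. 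Either way, no pair at $\cS_n$-distance $>2$ can have its $\varepsilon_\spin$-line inside the support, so $\chi^\downarrow_{\varepsilon_\spin}\le 1$.

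Next I would prove the matching lower bound $\chi^\uparrow_{\varepsilon_\spin}\ge 1$, i.e.\ that for \emph{every} pair $P,P'$ of points of $\cS_n$ at distance $\le 2$ the projective line $\langle\varepsilon_\spin(P),\varepsilon_\spin(P')\rangle$ is contained in $|\varepsilon_\spin|$. The distance-$1$ case is immediate since $\varepsilon_\spin$ maps lines of $\cS_n$ to projective lines inside the support. For distance $2$, the key point is that any two points of $\cS_n$ at distance $2$ lie in a common quad $Q\cong\mathrm{W}(3,\KK)$, and — via the construction of Subsection~\ref{DPP1} — this quad sits naturally as $\mathrm{W}(3,\KK)$ inside a projective $3$-space $S(Y)$ of type $+$ of $\Delta_{n+1}$, on which $\varepsilon^+_\spin$ induces (up to isomorphism) the natural embedding of $\mathrm{Q}^+(7,\KK)$ restricted appropriately, so that $\varepsilon^+_\spin(S(Y))$ spans a projective $3$-space fully contained in $|\varepsilon^+_\spin|$. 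Since $|\varepsilon_\spin|=|\varepsilon^+_\spin|$, the line $\langle\varepsilon_\spin(P),\varepsilon_\spin(P')\rangle$, lying in $\varepsilon^+_\spin(S(Y))$, is contained in the support. Combined with the previous paragraph this gives $\chi^\uparrow_{\varepsilon_\spin}=\chi^\downarrow_{\varepsilon_\spin}=1$, i.e.\ $\varepsilon_\spin$ is tightly $1$-opaque (note $\diam(\mathrm{DQ}(2n,\KK))=n\ge 2$, and indeed $>1$ for $n\ge 2$, so this is a genuine opacity statement and not complete opacity; one should also record that some pair at distance $2$ is not collinear so $\varepsilon_\spin$ is not transparent, which is automatic once a non-collinear pair in a quad is produced).

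The main obstacle I anticipate is the precise bookkeeping in the distance-$2$ step: one must verify that two points of $\cS_n$ at distance $2$ really do lie in a common quad of $\cS_n$ of type $\mathrm{W}(3,\KK)$ (this follows from the standard theory of dual polar spaces, where quads are the convex closures of point-pairs at distance $2$), and then that the embedding $\varepsilon^+_\spin$ restricted to the corresponding $+$-type $3$-space $S(Y)$ has image spanning a full projective $3$-space lying in the spinor variety — this is where one leans on the explicit structure of $\varepsilon^+_\spin$ on residues of small rank (the $\mathrm{Q}^+(7,\KK)\hookrightarrow\PG(7,\KK)$ picture used already in Lemma~\ref{spin l2}) and on $|\varepsilon_\spin|=|\varepsilon^+_\spin|$. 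A secondary subtlety is keeping the two characteristic regimes of Proposition~\ref{spin HS} versus Proposition~\ref{spin HS bis} straight, but since both are already proved, invoking ``$\varepsilon^+_\spin$ is transparent'' uniformly causes no difficulty here. Alternatively, one could phrase the whole argument through Lemma~\ref{cI} with $\mathfrak{S}$ the family of quads of $\cS_n$, checking (S\ref{S1}), (S\ref{S2}), (B\ref{B1})–(B\ref{B3}) with $\chi_0=1$ (using that a quad $\mathrm{W}(3,\KK)$ has $\varepsilon_\spin$ restricting to the natural symplectic-into-$\PG(3,\KK)$ embedding, which is completely opaque hence tightly $1$-opaque with $d_0=2$); but the hypothesis $\chi_0<d_0-1$ of (B\ref{B3}) \emph{fails} for quads, so Lemma~\ref{cI} is not directly applicable, and the hands-on argument above is the cleaner route.
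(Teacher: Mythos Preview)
Your argument is correct and in fact more direct than the paper's. The paper proceeds in two stages: first it invokes Lemma~\ref{l1} (via Lemma~\ref{Prel0}, Corollary~\ref{Prel2} and Lemma~\ref{spin l1}) to establish that $\varepsilon_\spin$ admits a tight degree of opacity, and then applies Lemma~\ref{cI} with $\mathfrak{S}$ the family of $n$-grassmannians of residues of type $\{n-2,n-1,n\}$ (so each $S\cong\mathrm{DQ}(6,\KK)$ has diameter $3$, allowing (B\ref{B3}) to hold with $\chi_0=1<2=d_0-1$); inside such an $S$ the paper then uses transparency of $\varepsilon^+_\spin$ exactly as you do to rule out distance-$3$ pairs, and the quad argument to handle distance-$2$ pairs. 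You instead bypass both lemmas entirely: transparency of $\varepsilon^+_\spin$ immediately gives $\chi^\downarrow_{\varepsilon_\spin}\le 1$ globally (any line in the support comes from a line of $\Delta_{n+1}$, forcing the endpoints into a common quad), and the fact that every quad $Q(Y)\cong\mathrm{W}(3,\KK)$ sits inside a projective $3$-space $S(Y)$ of $\Delta_{n+1}$, whose $\varepsilon^+_\spin$-image is a genuine $3$-subspace of $\PG(V)$ contained in $|\varepsilon^+_\spin|=|\varepsilon_\spin|$, gives $\chi^\uparrow_{\varepsilon_\spin}\ge 1$ globally. Your observation that Lemma~\ref{cI} cannot be applied with $\mathfrak{S}$ the family of quads is exactly why the paper passes to the larger diameter-$3$ residues; your hands-on route avoids this detour. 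A pleasant side effect of your approach is that the hypothesis ``$\mathrm{char}(\KK)\neq 2$ or $\KK$ infinite'' is used only to cite Proposition~\ref{spin HS}; since Proposition~\ref{spin HS bis} covers the remaining case, your argument in fact yields Propositions~\ref{spin DQ} and~\ref{spin DQ bis} simultaneously.
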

\begin{proof}
The proof is similar to that of Proposition~\ref{spin HS}. We only give a sketch of it, leaving most of the details to the reader.

Put $\varepsilon := \varepsilon_\spin$ and $\hat{\varepsilon} := \varepsilon_\spin^+$, for short. Also, $\cS := \mathrm{Q}(2n,\KK)$ and $\cS_n := \mathrm{DQ}(2n,\KK)$. By the same arguments used to prove Corollary \ref{spin l1 cor} we obtain that $\varepsilon$ admits a tight degree of opacity, say $\chi$. Having proved this, we can exploit Lemma~\ref{cI} to prove that $\chi = 1$.

When $n = 2$ we have $\mathrm{DQ}(4,\KK) \cong\mathrm{W}(3,\KK)$ and $\varepsilon$ is isomorphic to the natural embedding of $\mathrm{W}(3,\KK)$ in $\PG(3,\KK)$, which is completely opaque. Clearly, $\chi = 1$ in this case.

Let $n > 2$ and let $\mathfrak{S}$ be the family of the $n$-grassmannians $R_n$, for $R$ a residue of $\cS$ of type $\{n-2, n-1, n\}$. Both conditions (S\ref{S1}) and (S\ref{S2}) of Section \ref{Sec 2.2} hold for this choice of $\mathfrak{S}$. In order to obtain the conclusion by Lemma \ref{cI} we only must prove that the embedding $\varepsilon_S$ induced by $\varepsilon$ on a member $S$ of $\mathfrak{S}$ is tightly $1$-opaque.

The hypotheses of Lemma \ref{l1} hold for the embedding $\varepsilon_S$. Hence $\varepsilon_S$ admits a tight degree of opacity, say $\chi_S$. It remains to prove that $\chi_S = 1$. We shall discuss this issue in some detail.

Note firstly that $S$ is convex and $\mathrm{diam}(S) = 3$ (see Section~\ref{Prel}). Let $A$ and $B$ be points of $S$ at distance $2$ and let $[A,B]$ be their convex hull. Then $[A,B]\cong \mathrm{W}(3,\KK)$ and the embedding $\varepsilon_{A,B}$ induced by $\varepsilon$ on $[A,B]$ is isomorphic to the natural embedding of $W(3,\KK)$ in $\PG(3,\KK)$. Hence $\varepsilon_{A,B}$ is completely opaque. Consequently, $\langle\varepsilon_S(A),\varepsilon_S(B)\rangle \subseteq |\varepsilon_S|$. Therefore $\chi_S \geq 1$.

In order to prove that $\chi_S = 1$ it only remains to prove that if $A$ and $B$ are points of $S$ at distance $3$ then $\langle\varepsilon_S(A),\varepsilon_S(B)\rangle\not\subseteq |\varepsilon_S|$. Suppose the contrary: $\langle\varepsilon_S(A),\varepsilon_S(B)\rangle\subseteq |\varepsilon_S|$. Then $\langle\hat{\varepsilon}(A),\hat{\varepsilon}(B)\rangle\subseteq|\hat{\varepsilon}|$, since $\hat{\varepsilon}$ induces $\varepsilon_S$ on $S$. However, $\hat{\varepsilon}$ is transparent (Proposition~\ref{spin HS}). Therefore $A$ and $B$ are collinear in $\Delta_{n+1}$. Let $L$ be the line of $\Delta_{n+1}$ through $A$ and $B$ and let $H$ be the hyperplane of $\PG(2n+1,\KK)$ which we have used to construct the polar space $\cS$ (see Subsection~\ref{DPP1}). The line $L$ corresponds to a singular subspace $X$ of $\Delta_1$ of rank $n-1$ and the points $A$ and $B$ are the intersections $A = H\cap A'$ and $B = H\cap B'$ of $H$ with uniquely determined singular subspaces $A'$ and $B'$ of $\Delta_1$ of  rank $n+1$ in the family corresponding to the type $n+1$ of $\Delta$. Clearly, $\rank(X\cap H) = n-2$ and $X$ contains the singular subspace $Y$ of $\cS$ of rank $n-3$ corresponding to the subspace $S$. Therefore $Z := X\cap H$ is a singular subspace of $\cS$ of rank $n-2$ and $Y \subset Z \subset A, B$. In other words, $A$ and $B$ are points of the subspace $S' := \sh_n(Z)$ of $\cS_n$. The latter has diameter $2$ and is contained in $S$.  Hence $A$ and $B$ have distance $2$ in $S$ too. This contradicts our choice of $A$ and $B$. Therefore $\langle\varepsilon_S(A),\varepsilon_S(B)\rangle\not\subseteq |\varepsilon_S|$, as claimed.
\end{proof}

\begin{prop}\label{spin DQ bis}
Let $\KK = \mathrm{GF}(q)$, with $q$ a power of $2$. Then $\varepsilon_\spin$ is tightly $1$-opaque.
\end{prop}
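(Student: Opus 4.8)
The plan is to parallel the structure of the proof of Proposition \ref{spin HS bis}, replacing the half-spin geometry $\Delta_n$ by the dual polar space $\mathrm{DQ}(2n,\KK)$ and keeping track of the quads, which play the role that collinearity played there. First I would set $\varepsilon := \varepsilon_\spin$ and $\hat\varepsilon := \varepsilon_\spin^+$, recall that $\hat\varepsilon$ is transparent by Proposition \ref{spin HS bis} (the case of a finite field of characteristic $2$), and that $\varepsilon$ admits a tight degree of opacity $\chi$ by the same argument as in Proposition \ref{spin DQ} (via Lemma \ref{l1}, with (A\ref{A3}) coming from Lemma \ref{spin l1} applied to the ambient spinor variety, which is legitimate since $|\varepsilon| = |\hat\varepsilon|$ and we are in even characteristic over a finite field — so here I should instead invoke the argument of Proposition \ref{spin HS bis} rather than Lemma \ref{spin l1}). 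That $\chi \geq 1$ is immediate: a quad of $\mathrm{DQ}(2n,\KK)$ is isomorphic to $\mathrm{W}(3,\KK)$, whose natural embedding in $\PG(3,\KK)$ is completely opaque, so for collinear-at-distance-$2$ points $A,B$ we have $\langle\varepsilon(A),\varepsilon(B)\rangle \subseteq |\varepsilon|$.

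The core of the argument is to show $\chi \leq 1$, i.e.\ that no secant line $\langle\varepsilon(A),\varepsilon(B)\rangle$ with $d(A,B)\geq 2$ but $A,B$ in no common quad lies in $|\varepsilon|$ — equivalently, since $\chi\geq 1$, that $\langle\varepsilon(A),\varepsilon(B)\rangle\subseteq|\varepsilon|$ forces $A$ and $B$ to lie in a common quad. As in Proposition \ref{spin DQ}, if $\langle\varepsilon(A),\varepsilon(B)\rangle\subseteq|\varepsilon| = |\hat\varepsilon|$ then transparency of $\hat\varepsilon$ (Proposition \ref{spin HS bis}) forces $A$ and $B$ to be collinear in $\Delta_{n+1}$, say on a line $L$ of $\Delta_{n+1}$ corresponding to an $(n-1)$-element $X$ of $\Delta$. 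Writing $A = H\cap A'$, $B = H\cap B'$ for $(n+1)$-elements $A',B'$ of $\Delta$ as in Subsection \ref{DPP1}, one gets $Z := X\cap H$ of rank $n-2$ with $Z \subset A,B$, so $A,B\in \sh_n(Z)$, which is exactly a quad of $\cS_n = \mathrm{DQ}(2n,\KK)$. Hence $A$ and $B$ have distance $\leq 2$ in $\cS_n$ and lie in a common quad; by the distance-transitivity of $\Aut(\cS_n)_\varepsilon$ (Corollary \ref{Prel2}) this holds for every pair on a secant line of $|\varepsilon|$, so $\chi^\downarrow_\varepsilon \leq 1$, and combined with $\chi^\uparrow_\varepsilon \geq 1$ this gives $\chi = 1$.

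Alternatively, and perhaps more cleanly, I could bypass the tight-degree machinery and argue directly: $\chi^\downarrow_\varepsilon \leq 1$ follows exactly as above from transparency of $\hat\varepsilon$ plus the $\sh_n(Z)$ argument, while $\chi^\uparrow_\varepsilon \geq 1$ follows from the completely opaque $\mathrm{W}(3,\KK)$-quads; together these force $\chi^\uparrow_\varepsilon = \chi^\downarrow_\varepsilon = 1$ without needing (A\ref{A3}) at all. This is attractive because the only delicate input then is Proposition \ref{spin HS bis}, which is already available. The hard part, as in the preceding propositions, is making sure the reduction to $\Delta_{n+1}$ is watertight in characteristic $2$: transparency of $\hat\varepsilon$ over $\mathrm{GF}(q)$ with $q$ even is precisely what Proposition \ref{spin HS bis} supplies, and the rest is the combinatorics of the hyperplane section $H$, which is identical to the characteristic-not-$2$ case treated in Proposition \ref{spin DQ}. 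So in fact the whole proof can be stated in one or two lines: \emph{by Proposition \ref{spin HS bis}, $\hat\varepsilon$ is transparent; now repeat verbatim the argument of Proposition \ref{spin DQ}}, the only place where $\mathrm{char}(\KK)\neq 2$ or $\KK$ infinite was used in that proof being the transparency of $\hat\varepsilon$, which Proposition \ref{spin HS bis} now furnishes in the remaining case. I expect the main obstacle to be purely expository — deciding how much of the Proposition \ref{spin DQ} proof to reproduce versus invoke — rather than mathematical.
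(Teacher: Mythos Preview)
Your ``alternatively'' approach is correct and genuinely different from the paper's proof. The paper argues for Proposition~\ref{spin DQ bis} by mimicking the group-theoretic machinery of Proposition~\ref{spin HS bis}: it fixes a point $P$, considers the partition $\pi$ of $\mathrm{Far}(P)$ induced by secant lines through $\varepsilon(P)$, shows that the unipotent radical $U$ of the stabilizer $G_P$ is abelian (here $U \cong \mathrm{AM}_n(q)\cdot V(n,q)$, a non-split extension which is abelian precisely because $q$ is even), deduces that the class-wise stabilizer $U_0$ of $\pi$ has order $q$, and then obtains a contradiction because the Levi complement $C \cong \GL(n,q)$ normalizes no subgroup of $U$ of order $q$. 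This is a self-contained argument that does not invoke transparency of $\hat\varepsilon = \varepsilon^+_\spin$.

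Your route is shorter: you use Proposition~\ref{spin HS bis} as a black box to get transparency of $\hat\varepsilon$, and then the hyperplane-section argument from the last paragraph of the proof of Proposition~\ref{spin DQ} goes through verbatim, since that argument is purely combinatorial and used the hypothesis on $\KK$ only through transparency of $\hat\varepsilon$. Your observation that one can bypass Lemma~\ref{l1} and Lemma~\ref{cI} entirely---showing $\chi^\uparrow_\varepsilon \geq 1$ from the completely opaque $\mathrm{W}(3,q)$-quads and $\chi^\downarrow_\varepsilon \leq 1$ from the fact that collinearity in $\Delta_{n+1}$ forces $A$ and $B$ into a common $\sh_n(X\cap H)$---is valid and in fact cleaner than the paper's approach in both Propositions~\ref{spin DQ} and~\ref{spin DQ bis}. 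What the paper's approach buys is independence from Proposition~\ref{spin HS bis} (and, as Remark~\ref{spin DQ rem} notes, the paper's proof genuinely needs $U$ abelian, hence $q$ even); what your approach buys is brevity and a uniform treatment of all fields once transparency of the half-spin embedding is in hand. Your first paragraph's worry about (A\ref{A3}) is justified---Lemma~\ref{spin l1} does not apply here---but your second approach correctly sidesteps it.
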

\begin{proof}
  Put $\varepsilon := \varepsilon_\spin$ and $\cS := \mathrm{Q}(2n,q)$. If $P, P'$ are points of $\cS_n$ at distance $2$ then their convex hull $[P,P']$ is isomorphic to $\mathrm{W}(3,q)$ and $\varepsilon$ induces on it its natural embedding in $\PG(3,q)$. Hence $\langle\varepsilon(P),\varepsilon(P')\rangle\subseteq|\varepsilon|$. It remains to prove that if $d(P,P') > 2$ then $\langle\varepsilon(P),\varepsilon(P')\rangle\not\subseteq|\varepsilon|$. This conclusion can be obtained by an inductive argument, as in the proof
  of Proposition~\ref{spin HS bis}. Leaving the details for the reader, we only recall the main steps of this argument. Arguing by contradiction as in the proof of Proposition~\ref{spin HS bis}, we are reduced to the case where $\langle\varepsilon(P),\varepsilon(P')\rangle\subseteq|\varepsilon|$ for any two points $P$ and $P'$ at maximal distance. The set $\mathrm{Far}(P)$ of the points of $\cS_n$ at maximal distance from $P$ admits a partition $\pi$ corresponding to the family of lines $\langle\varepsilon(P),\varepsilon(X)\rangle$ for $X\in \mathrm{Far}(P)$. The stabilizer $G_P$ of $P$ in the derived group $G$ of $\Aut(\cS)$ permutes the classes of $\pi$. Moreover, the unipotent radical $U$ of $G_P$ acts regularly on $\mathrm{Far}(P)$ and it is abelian, of order $|U| = q^{(n+1)n/2}$. As in the proof of Proposition~\ref{spin HS bis}, we can see that the class-wise stabilizer $U_0$ of $\pi$ in $U$ has order $q$ and acts regularly on each of the classes of $\pi$. Obviously, $U_0 \unlhd G_P$. At this stage, a contradiction can be obtained. This final step is worthy of a more careful discussion.

The group $U$ is isomorphic to a non-split extension $\mathrm{AM}_n(q)\cdot V(n,q)$ of the additive group $\mathrm{AM}_n(q)$ of alternating matrices of order $n$ over $\GF(q)$ by the additive group of $V(n,q)$, which can be described as follows. The elements of  $\mathrm{AM}_n(q)\cdot V(n,q)$ are the pairs $(A, v)$ with $A\in \mathrm{AM}_n(q)$ and $v\in V(n,q)$ and the multiplication is defined by the following rule: $(A,v)\cdot(B,w) = (A+B+vw^T+wv^T, v+w)$. If $C$ is a Levi complement of $U$ in $G_P$, then $C\cong \GL(n,q)$ acts as follows on $U$: a matrix $M\in C$ maps $(A,v)$ onto $(MAM^T, Mv)$. It is clear from this description that $U$ admits no subgroup $U_0$ of order $q$ normalized by $C$. We have reached a final contradiction.
 \end{proof}
 Propositions~\ref{spin DQ} and \ref{spin DQ bis} yield claim \ref{main2 pt2}
 of Theorem \ref{main thm 2}.

\begin{remark}\label{spin DQ rem}
The proof of Proposition~\ref{spin DQ bis} does not work for $q$ odd. Indeed when $\mathrm{char}(\KK) \neq 2$ the unipotent radical $U$ of $G_P$ is non-abelian. So, there is no way to prove that the class-wise stabilizer $U_0$ of the partition $\pi$ is non-trivial.

On the other hand, most of the proof of Proposition~\ref{spin DQ bis} also works for $\KK$ an infinite field of characteristic $2$. Most likely, the final part of that proof can be modified in such a way as to work in this case too, but we are not completely sure about this.
\end{remark}

\subsubsection{The spin embedding $\varepsilon^-_\spin$ of the dual polar space $\mathrm{DQ}^-(2n+1,\KK_\sigma)$}

Let $\Delta := D_{n+1}(\KK)$ and assume that $\KK$ admits an involutory non-trivial automorphism $\sigma$. Let $\KK_\sigma$ be the subfield of $\KK$ fixed by $\sigma$ and let $\delta\in \Aut(\Delta)$ be defined as in Subsection~\ref{DPP2}. As seen in that subsection, the polar space $\cS := \mathrm{Q}^-(2n+1,\KK_\sigma)$ is formed by the singular subspaces of $\Delta_1$ fixed by $\delta$. Accordingly, $\Aut(\cS) =  C_{\Aut(\Delta)_{1,n,n+1}}(\delta)  \cong  C_{\Aut(\Delta)_1}(\delta)/\langle\delta\rangle$. (Note that $C_{\Aut(\Delta)_1}(\delta) = C_{\Aut(\Delta)_{1,n,n+1}}(\delta)\times\langle\delta\rangle$.) The dual polar space $\cS_n$ is embedded in $\Delta_{n+1}$ in the most natural way: the points of $\cS_n$ bijectively correspond to the $\{n,n+1\}$-flags of $\Delta$ stabilized by $\delta$ and every such flag is mapped onto its $(n+1)$-element. Denoted by $\eta$ this embedding of $\cS_n$ in $\Delta_{n+1}$, the spin embedding $\varepsilon^-_\spin$ of $\cS_n$ is defined as the composition $\varepsilon^+_\spin\cdot\eta$ of $\eta$ with the spin embedding $\varepsilon^+_\spin$ of $\Delta_{n+1}$ (see Section~\ref{3.2 Spin}).

\begin{prop}\label{spin elliptic}
The spin embedding $\varepsilon^-_\spin$ is transparent.
\end{prop}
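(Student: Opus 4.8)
The plan is to derive the transparency of $\varepsilon^-_\spin$ directly from the transparency of the spin embedding $\varepsilon^+_\spin$ of the ambient half-spin geometry $\Delta_{n+1}$, already established in Propositions~\ref{spin HS} and~\ref{spin HS bis}, exploiting the fact that $\varepsilon^-_\spin = \varepsilon^+_\spin\cdot\eta$ is \emph{induced} by $\varepsilon^+_\spin$ through the embedding $\eta\colon\cS_n\rightarrow\Delta_{n+1}$ of Subsection~\ref{DPP2}. This is the same pattern by which Claim~\ref{main2 pt4} of Theorem~\ref{main thm 2} is obtained from Claim~\ref{main2 pt3}, and it works particularly smoothly here because $\eta$ maps \emph{every} line of $\cS_n$ onto a \emph{full} line of $\Delta_{n+1}$ contained in $\cP_\delta$ (in contrast with the Pl\"ucker embedding of $\mathrm{DH}(2n-1,\KK)$, where lines become proper sublines).

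Concretely, recall from Subsection~\ref{DPP2} that $|\varepsilon^-_\spin| = \varepsilon^+_\spin(\cP_\delta)$, where $\cP_\delta = \eta(\cP_n)$ is a set of points of $\Delta_{n+1}$, and that $\eta$ is an isomorphism of point-line geometries from $\cS_n$ onto $\cP_\delta$ equipped with the lines of $\Delta_{n+1}$ contained in it. Let $\ell$ be a projective line of $\PG(V)$ with $\ell\subseteq|\varepsilon^-_\spin|$. Since $\cP_\delta$ consists of points of $\Delta_{n+1}$ we have $|\varepsilon^-_\spin|\subseteq|\varepsilon^+_\spin|$, hence $\ell\subseteq|\varepsilon^+_\spin|$, and the transparency of $\varepsilon^+_\spin$ yields a line $m$ of $\Delta_{n+1}$ with $\varepsilon^+_\spin(m) = \ell$. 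As $\varepsilon^+_\spin$ is injective and maps $m$ bijectively onto $\ell$, the inclusion $\ell\subseteq\varepsilon^+_\spin(\cP_\delta)$ forces every point of $m$ to lie in $\cP_\delta$, i.e.\ $m\subseteq\cP_\delta$. Writing $X$ for the $(n-1)$-element of $\Delta$ with $m = \sh_{n+1}(X)$, the characterization recalled in Subsection~\ref{DPP2} --- an $(n-1)$-element of $\Delta$ is stabilized by $\delta$ if and only if, regarded as a line of $\Delta_{n+1}$, it is contained in $\cP_\delta$ --- shows that $X$ is stabilized by $\delta$. Consequently $m = \eta(\ell')$ for a line $\ell'$ of $\cS_n$, whence $\ell = \varepsilon^+_\spin(m) = \varepsilon^-_\spin(\ell')$. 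Thus every projective line contained in $|\varepsilon^-_\spin|$ is the image of a line of $\cS_n$, which is exactly transparency.

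I do not expect a genuine obstacle here; the only point requiring attention is that the transparency of $\varepsilon^+_\spin$ must be available over an arbitrary field $\KK$ (here $\KK$ is merely assumed to carry an involutory automorphism $\sigma$), and this is guaranteed by combining Proposition~\ref{spin HS} (for $\mathrm{char}(\KK)\neq 2$ or $\KK$ infinite) with Proposition~\ref{spin HS bis} (for $\KK$ finite of characteristic $2$). A more self-contained but longer alternative would be to apply Lemma~\ref{cI} with $\mathfrak{S}$ the family of quads of $\cS_n$ --- each isomorphic to $\mathrm{H}(3,\KK)$ by Subsection~\ref{quads}, hence transparent in its natural embedding since a Hermitian generalized quadrangle is not of symplectic type --- after checking (S\ref{S1}), (S\ref{S2}) and (B\ref{B1})--(B\ref{B3}); but the direct deduction above is shorter, and that is the route I would take.
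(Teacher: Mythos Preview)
Your reduction is exactly the one the paper makes: use the transparency of $\varepsilon^+_\spin$ (Propositions~\ref{spin HS} and~\ref{spin HS bis}) to reduce the question to showing that every line of $\Delta_{n+1}$ fully contained in $\cP_\delta$ is the $\eta$-image of a line of $\cS_n$. The paper phrases this residual task as claim $(*)$: \emph{if $A$ is an $(n-1)$-element of $\Delta$ such that $X$ is incident with $X^\delta$ for every $X\in\sh_{n+1}(A)$, then $A=A^\delta$.} You dispose of this step by citing the biconditional stated in Subsection~\ref{DPP2} (``an $(n-1)$-element of $\Delta$ is stabilized by $\delta$ if and only if, regarded as a line of $\Delta_{n+1}$, it is fully contained in $\cP_\delta$''). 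That is a legitimate citation within the paper, and your argument is correct as written.

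The one thing worth flagging is that the paper, despite having declared this biconditional ``immediate to see'' in Subsection~\ref{DPP2}, devotes the \emph{entire} proof of Proposition~\ref{spin elliptic} to establishing the non-trivial direction $(*)$ by an explicit coordinate computation in two cases (according to whether $A\subseteq W_0:=W\cap W^\delta$ or not). So the substantive content of the paper's proof is precisely the step you are quoting as already done. Your write-up is therefore correct but elides the real work; if you were asked to make the argument self-contained, you would need to supply a proof of $(*)$, and the paper's coordinate argument is one way to do that.
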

\begin{proof}
We know that $\varepsilon^+_\spin$ is transparent (Propositions~\ref{spin HS} and \ref{spin HS bis}). So, in order to prove that $\varepsilon^-_\spin$ is transparent we only must prove that $\eta(\cS_n)$ is a full subgeometry of $\Delta_{n+1}$, namely if a line $\ell'$ of $\Delta_{n+1}$ is fully contained in the $\eta$-image of the point-set of $\cS_n$, then $\ell' = \eta(\ell)$ for a line $\ell$ of $\cS_n$. Explicitly, we must prove the following:

\begin{itemize}
\item[$(*)$] If $A$ is an $(n-1)$-element of $\Delta$ such that $X$ is incident with $X^\delta$ in $\Delta$ for every $X\in \sh_{n+1}(A)$, then $A = A^\delta$.
\end{itemize}
As $C_{\Aut(\Delta)_1}(\delta)$ acts transitively on the set of $(n+1)$-elements $X$ of $\Delta$ such that $X^\delta$ is incident with $X$, we can assume  without loss of generality that $A$ has been chosen so that $\sh_{n+1}(A)$ contains an element $W:=\langle u_1, u_3,\ldots,u_{2n-1},u_{2n+1}\rangle$ such  that $W^{\delta}=\langle u_1, u_3,\ldots,u_{2n-1},u_{2n+2}\rangle$, where $U = (u_1, u_2,... , u_{2n+1}, u_{2n+2})$ is the basis of $V = V(2n+2,\KK)$ chosen in Subsection \ref{DPP2}.  With this set-up, $A = \langle a_1,\ldots,a_{n-1}\rangle\subset\langle u_1, u_3 \ldots,u_{2n-1},u_{2n+1}\rangle =W$.

Put $W_0:=W\cap W^{\delta}=\langle u_1, u_3,\ldots,u_{2n-1}\rangle$. Two cases must be considered. \\

\noindent
\emph{Case 1.} $A\subseteq W_0$. Since the stabilizer of $W_0$ in $\Aut(\cS)$ acts as $\PgL(n,\KK_\sigma)$ on $\mathrm{Res}_\cS(W_0)\cong \PG(n-1,\KK_\sigma)$, we can assume, up to a suitable change of basis for $A$, that there are $\lambda_i\in\KK$ such that $a_i=u_{2i-1}+\lambda_i u_{2n-1}$ for all $i=1,\ldots,n-1$. Claim $(*)$ now amounts to the following: if $\dim (X\cap X^{\delta})=n$ for any $(n+1)$-element $X$ of $\Delta$ with $A\subseteq X$, then all $\lambda_i$ are in $\KK_\sigma$.

When $\lambda_i=0$ for all $i=1,\ldots,n-1$ there is nothing to prove, as $a_i^{\delta}=u_{2i-1}^{\delta}=u_{2i-1}=a_i$. Suppose that there is $i$ such that $\lambda_i\neq 0$. Let $\lambda_1\neq 0$, to fix ideas. Define
\[w ~ :=~(x_1, x_2,\dots, x_{2n+1}, x_{2n+2}), \qquad w' ~:=~(x'_1, x'_2,\dots, x'_{2n+1},x'_{2n+2}).\]
Write $X=\langle a_1,\ldots,a_{n-1},w,w'\rangle$. By construction, we can assume that
\begin{equation}\label{31}
x_{2i-1} ~ = ~ x_{2i-1}' ~ =~0,\qquad \text{for } i=1,2,\ldots,n-1.
\end{equation}
The space $X$ is totally singular for the form $q(x_1, x_2,\ldots , x_{2n+1},x_{2n+2})$ associated with $\Delta_1$ (see Subsection~\ref{Cn-classical-field},  \eqref{form O+}) if and only if
\begin{equation}\label{32}
  \begin{cases}
x_{2i} =  -\lambda_ix_{2n},\qquad \text{for}~ i = 1, 2,\dots, n-1;\\
x'_{2i} =  -\lambda_ix'_{2n}, \qquad \text{for}~ i = 1, 2,\dots, n-1;\\
x_{2n-1}x_{2n} + x_{2n+1}x_{2n+2}  =  0;\\
x'_{2n-1}x'_{2n} + x'_{2n+1}x'_{2n+2} =  0;\\
x_{2n-1}x'_{2n} + x_{2n}x'_{2n-1} + x_{2n+1}x'_{2n+2} + x_{2n+2}x'_{2n+1}  = 0.
\end{cases}\end{equation}
Observe that if $w$ and $w'$ satisfy \eqref{31} and \eqref{32} and,
in addition, the set $\{a_1,\ldots,a_{n-1},w,w'\}$ is linearly independent,
then the matrix $M$ with rows given by
\[M ~= ~ [a_1,\dots, a_{n-1}, a_1^\delta,\dots, a_{n-1}^\delta, w, w', w^\delta, w'^\delta],\]
has rank $n+2$ (here each entry in the list is a distinct row vector containing
the components of the corresponding vector; so $M$ has $2n+2$ rows).
The condition $\lambda_1\in\KK_\sigma$ is equivalent to
$a_1=a_1^{\delta}$. So we just need to show that $M_0=[a_1,\dots, a_{n-1}, a_1^\delta,\dots, a_{n-1}^\delta]$ has rank $n-1$.

Since $\lambda_1\neq 0$, the $(3\times 3)$ minor of
$M$ encompassing rows $2n-1,2n,2n+1$, that is $[w,w',w^{\delta}]$, and columns $2,2n+1,2n+2$ is non-singular and
$x_{2n-1}=x_{2n-1}'=0$. So, the minor of $M$ determined by its first $n-1$ rows as well as rows $2n-1,2n,2n+1$ has rank $n+2$.
In particular, the remaining $n-1$ rows of $M$ must be linear combinations of these. Thus,
$a_1^{\delta}\in\langle a_1,\ldots,a_{n-1},w,w',w^{\delta}\rangle$.
This is possible only if $a_1^{\delta}=a_1$, that is $\lambda_1\in\KK_\sigma$. A similar argument for the remaining
values of $i$ completes the proof of $(*)$ in this case. \\

\noindent
\emph{Case 2.} $A\not\subseteq W_0$. We show that in this case
  there is an $X$ containing $A$ which is not incident with $X^{\delta}$.
  First, we can
  assume
  $a_1,\ldots,a_{n-2}\in U_0$ and write $a_{n-1}=b+u_{2n+1}$ for
  some $b\in W_0$.
  We can also assume
\[\begin{array}{ccl}
a_i & = &  u_{2i-1} + \lambda_i u_{2n-3} + \mu_i u_{2n-1}  ~~~ \mbox{for} ~ i = 1, 2,\dots, n-2,\\
a_{n-1} &  = & \lambda_{n-1}u_{2n-3} + \mu_{n-1}u_{2n-1} + u_{2n+1},
\end{array}\]
for suitable scalars $\lambda_1,\ldots, \lambda_{n-1}, \mu_1,\ldots, \mu_{n-1}$ in $\KK$. So,
\[\begin{array}{ccl}
a_i^\delta & = &  u_{2i-1} + \lambda^\sigma_i u_{2n-3} + \mu_i^\sigma u_{2n-1} ~~ \text{for} ~ i = 1, 2,\dots, n-2;\\
a_{n-1}^\delta &  = & \lambda_{n-1}^\sigma u_{2n-3} + \mu_{n-1}^\sigma u_{2n-1} + u_{2n+2}.
\end{array}\]
If we take two vectors $w$ and $w'$ with $x_{2i-1} = x'_{2i-1}$ for $i = 1, 2,\ldots, n-1$ as in Case 1, the conditions for $X=\langle a_1,\ldots,a_{n-1},w,w'\rangle$
to be totally singular for the form associated with $\Delta_1$ become
\begin{equation}\label{42} \begin{cases}
x_{2i} =  -\lambda_ix_{2n-2} - \mu_ix_{2n}, \qquad \text{for}~ i = 1, 2,\dots, n-2;\\
x'_{2i} =  -\lambda_ix'_{2n-2} - \mu_ix'_{2n}, \qquad \text{for}~ i = 1, 2,\dots, n-2;\\
x_{2n+2} = -\lambda_{n-1}x_{2n-2} - \mu_{n-1}x_{2n};  \\
x'_{2n+2} = -\lambda_{n-1}x'_{2n-2} - \mu_{n-1}x'_{2n};  \\
x_{2n-3}x_{2n-2} + x_{2n-1}x_{2n}  =  0;\\
x'_{2n-3}x'_{2n-2} + x'_{2n-1}x'_{2n}  =  0;\\
x_{2n-3}x'_{2n-2}+ x_{2n-2}x'_{2n-3} + x_{2n-1}x'_{2n} + x_{2n}x'_{2n-1}  = 0.
\end{cases}\end{equation}
We now need to show that there are $w$, $w'$ such that \eqref{42} is
fulfilled, the vectors $\{a_1,\ldots,a_{n-1},w,w'\}$ are linearly independent and the matrix
\[M ~= ~ [a_1,\dots, a_{n-1}, a_1^\delta,\dots, a_{n-1}^\delta, w, w', w^\delta, w'^\delta]\]
has rank larger than $n+2$. To this aim, observe that the minor of $M$
given by the first $n-1$ rows together with row $2n-2$ as well as columns
$1,3,5,\ldots,2n-5,2n+1,2n+2$ has full rank $n$. So it is enough to choose
$w$ and $w'$ such that the minor of $M$ comprising the last $4$ rows and
columns $2n-3,2n-1,2n-2$ and $2n$ has rank $3$. This is always possible. Claim $(*)$ is proved in this case too.
\end{proof}
Proposition \ref{spin elliptic} corresponds to claim~\ref{main2 pt4} of Theorem~\ref{main thm 2}.

\section{More on $\cS_k$ with $\cS = \mathrm{W}(2n-1,\KK)$}
\label{Sec6}

Symplectic $k$-grassmannians with $k < n$ are not considered in Corollary~\ref{co} because their Pl\"{u}cker embeddings are not transparent. However, the following holds.

\begin{theorem}
\label{qt}
Let $\cS := \mathrm{W}(2n-1,\KK)$ and, for $1 < k < n$, let $\varepsilon := \varepsilon_k^\cS$ be the Pl\"{u}cker embedding of the $k$-grassmannian $\cS_k$ of $\cS$. Then $\aut(|\varepsilon|) = \Aut(\varepsilon)$.
\end{theorem}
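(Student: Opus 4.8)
The plan is to show that $\aut(|\varepsilon|)$, the setwise stabiliser of the $k$-Grassmann variety $|\varepsilon| = |\varepsilon_k^{\cS}|$ of $\cS = \mathrm{W}(2n-1,\KK)$ inside $\PgL(\bigwedge^k V)$, cannot be strictly larger than $\Aut(\varepsilon)$ even though $\varepsilon$ is not transparent. The obstruction to transparency is that certain projective lines of $\PG(\bigwedge^k V)$ lying entirely in $|\varepsilon|$ are \emph{not} $\varepsilon$-images of lines of $\cS_k$; by Proposition~\ref{t1} these ``bad'' lines are exactly those of the form $\langle\varepsilon(A),\varepsilon(B)\rangle$ with $d(A,B)=2$ in $\cS_k$ and $A,B$ \emph{not} collinear in $\cG_k$ (equivalently, the symplectic form induces a non-degenerate alternating form on the rank-$(n-k+1)$ upper residue of $A\cap B$). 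So $|\varepsilon|$ carries a second distinguished family of lines besides $\cL^\varepsilon$. The strategy is: first, recognise $\cL^\varepsilon$ intrinsically inside $|\varepsilon|$ as a geometrically distinguished sub-family among all projective lines contained in $|\varepsilon|$, so that any collineation stabilising $|\varepsilon|$ must permute $\cL^\varepsilon$; this immediately gives $\aut(|\varepsilon|)\le\Aut(\varepsilon)$, and the reverse inclusion is trivial.

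First I would set up notation: let $\hat\varepsilon := \varepsilon_k^{\cG}$ be the Pl\"ucker embedding of the projective $k$-grassmannian $\cG_k$, so $|\varepsilon|\subseteq|\hat\varepsilon|$ and $\varepsilon$ is the restriction of $\hat\varepsilon$. Since $\hat\varepsilon$ is transparent (Proposition~\ref{t0}), every projective line $L$ of $\PG(\bigwedge^k V)$ with $L\subseteq|\hat\varepsilon|$ is $\hat\varepsilon(\ell)$ for a unique line $\ell$ of $\cG_k$, i.e. $L$ corresponds to a flag $(X,Y)$ with $\rank(X)=k-1$, $\rank(Y)=k+1$. Now a projective line $L\subseteq|\varepsilon|$ is of one of two kinds: either $L\subseteq|\hat\varepsilon|$, in which case $L=\hat\varepsilon(\ell_{X,Y})$ and $L\in\cL^\varepsilon$ precisely when $X$ is totally isotropic and $Y$ is totally isotropic (all the $Z$ with $X\subset Z\subset Y$, $\rank Z=k$, being then totally isotropic automatically); or $L\not\subseteq|\hat\varepsilon|$, which by the proof of Proposition~\ref{t1} happens exactly for the ``bad'' lines through two $\cS_k$-points $A,B$ at distance $2$ with $\dim(A\cap B)=k-2$. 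The key combinatorial point is that lines of $\cL^\varepsilon$ and bad lines behave differently with respect to incidence with other points and lines of $|\varepsilon|$: a line $\ell$ of $\cL^\varepsilon$ meets many further lines of $\cL^\varepsilon$ in single points (it lies in planes of $|\varepsilon|$ of the shape $\hat\varepsilon$ of a $\mathrm{Q}^+(5,\KK)$-quad corresponding to a flag $(X,Y)$ of cotypes around $k$), whereas a bad line lies inside a $W(3,\KK)$-type configuration in which essentially \emph{every} projective line through one of its points and a point of $|\varepsilon|$ near it is again bad. Concretely, I would distinguish $\cL^\varepsilon$ from the bad lines by a local count: the pencil of lines of $|\varepsilon|$ through a point $\varepsilon(A)$ splits into lines of $\cL^\varepsilon$ (corresponding to lines of $\cS_k$ on $A$, hence to flags of type $\{k-1,k+1\}$ refining $A$ with both pieces isotropic) and bad lines, and the union of all lines of $\cL^\varepsilon$ on $\varepsilon(A)$ spans a subspace of a different dimension from the span of all bad lines on $\varepsilon(A)$; alternatively one shows $\cL^\varepsilon$ is characterised as the set of lines $L\subseteq|\varepsilon|$ such that $L\subseteq|\hat\varepsilon|$, i.e. $\cL^\varepsilon = \{L : L\subseteq |\varepsilon|,\ |L\cap|\hat\varepsilon|| > 2\text{ is all of }L\}$ once we know $|\hat\varepsilon|$ is itself recoverable.

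The cleanest route, and the one I would pursue, is therefore to argue that $|\hat\varepsilon|$ — the full $k$-Grassmann variety of $\cG$ — is recoverable from $|\varepsilon|$ by a purely projective construction, so that $\aut(|\varepsilon|)$ embeds into $\aut(|\hat\varepsilon|) = \Aut(\hat\varepsilon)$ (using that $\hat\varepsilon$ is transparent); then the stabiliser in $\Aut(\hat\varepsilon)$ of $|\varepsilon|$ acts on $\cG_k$ preserving the point-set $\cP_k$ of $\cS_k$, hence (since $\cS_k$ is recovered from $\cS$, which in turn is the polar space determined by that point-set via its singular subspaces) it induces an element of $\Aut(\cS_k)=\Aut(\cS)$ lifting to $\varepsilon$, i.e. an element of $\Aut(\varepsilon)$. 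To recover $|\hat\varepsilon|$ from $|\varepsilon|$ I would use that a bad line $L$ through $\varepsilon(A)$ and the lines of $\cL^\varepsilon$ through $\varepsilon(A)$ together generate, inside $|\hat\varepsilon|$, the plane $\hat\varepsilon$-image of the rank-$2$ residue configuration; more to the point, the points of $|\hat\varepsilon|\setminus|\varepsilon|$ lying on a bad line $L$ are obtained as the intersections of $L$ with the spans of suitable lines of $\cL^\varepsilon$, and one checks these reconstruct enough of $|\hat\varepsilon|$ to pin down the Grassmann variety. The main obstacle, and where the real work lies, is precisely this reconstruction step: verifying that the ``extra'' points of $|\hat\varepsilon|$ (those on bad lines but outside $|\varepsilon|$) are projectively determined by $|\varepsilon|$ alone, uniformly in $k$, without circularity — this needs a careful analysis of how a $\mathrm{W}(3,\KK)$-type residue sits inside the ambient $\mathrm{Q}^+(5,\KK)$-type residue of $\cG_k$, i.e. of the interplay between the symplectic polarity and the Pl\"ucker coordinates on rank-$(n-k+1)$ and rank-$2$ residues. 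Once the reconstruction of $|\hat\varepsilon|$ (hence of $\cL^\varepsilon$, hence of $\cS_k$ and $\cS$) is in hand, the chain $\aut(|\varepsilon|)\le\Aut(\varepsilon)\le\aut(|\varepsilon|)$ closes and the theorem follows.
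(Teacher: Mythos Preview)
Your overall strategy --- show that $\cL^\varepsilon$ is intrinsically recoverable from $|\varepsilon|$, so that $\aut(|\varepsilon|)$ must permute $\cL^\varepsilon$ --- is exactly right, and it is the paper's strategy too. However, your analysis of the ``bad'' lines contains a basic error that derails the rest of the sketch.

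You claim the bad lines are those $\langle\varepsilon(A),\varepsilon(B)\rangle$ with $A,B$ \emph{not} collinear in $\cG_k$ (so $\dim(A\cap B)=k-2$), and later that a bad line $L$ satisfies $L\not\subseteq|\hat\varepsilon|$. Both are false. Since $|\varepsilon|\subseteq|\hat\varepsilon|$ (every totally isotropic $k$-space is a $k$-space), every line $L\subseteq|\varepsilon|$ lies in $|\hat\varepsilon|$; by the transparency of $\hat\varepsilon$ it equals $\hat\varepsilon(\ell_{X,Y})$ for a unique flag $X\subset Y$ with $\rank(X)=k-1$, $\rank(Y)=k+1$. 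Here $X$ is automatically totally isotropic, and $L\in\cL^\varepsilon$ iff $Y$ is totally isotropic as well; the bad lines are those with $Y$ \emph{not} totally isotropic, i.e.\ $A$ and $B$ are \emph{collinear} in $\cG_k$ but not in $\cS_k$. (This is exactly what the proof of Proposition~\ref{t1} says: the pairs at distance~$2$ in both $\cS_k$ and $\cG_k$ give lines \emph{not} contained in $|\varepsilon|$.) Your proposed reconstruction of $|\hat\varepsilon|$ via ``points of $|\hat\varepsilon|\setminus|\varepsilon|$ lying on a bad line'' therefore makes no sense: bad lines carry no such points.

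The paper bypasses any reconstruction of $|\hat\varepsilon|$ and works with the \emph{maximal projective subspaces} of $|\varepsilon|$ directly. Using $|\varepsilon| = |\hat\varepsilon|\cap\Sigma$, these are shown to form two families: $\cM^+=\{M^+(Z_1)\}$, indexed by totally isotropic $(k{-}1)$-spaces $Z_1$ and of projective dimension $2n-2k+1$, and $\cM^-=\{M^-(Z_2)\}$, indexed by totally isotropic $(k{+}1)$-spaces $Z_2$ and of dimension $k$. The lines of $\cL^\varepsilon$ are precisely the nonempty intersections $M^+(Z_1)\cap M^-(Z_2)$, so it suffices to tell the two families apart. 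If $2n-2k+1\neq k$ this is done by dimension. If $2n-2k+1=k$, one observes that every line of a member of $\cM^-$ lies in some member of $\cM^+$, whereas each member of $\cM^+$ contains lines (coming from non-isotropic lines of the residual symplectic space on $Z_1^\perp/Z_1$) lying in no member of $\cM^-$. This separates the families and finishes the proof, without ever needing $|\hat\varepsilon|$.
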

\begin{proof}
  Let $V = V(2n,\KK)$, $\cG = \PG(V)$, $\cG_k$ the $k$-grassmannian of $\cG$ and $\hat{\varepsilon} := \varepsilon_k^\cG$ the Pl\"{u}cker embedding of $\cG_k$, as in
  Section~\ref{Sec4p}. Also, let $f$ be the alternating form of $V$ associated to $\cS$. Denoted by $\perp$ the orthogonality relation with respect to $f$, a subspace $X$ of $\cG$ is totally isotropic for $f$ (namely it belongs to $\cS$) if and only if $X\subseteq X^\perp$.

It is well known that the Grassmann variety $|\hat{\varepsilon}|$ contains two families of maximal projective subspaces, say $\widehat{\cM}^+$ and $\widehat{\cM}^-$, defined as follows; see Blok and Cooperstein \cite{PGPG}.
In order to keep this work self-contained, we recall their characterization
here. Given two subspaces $Z_1$ and $Z_2$ of $\cG$ of rank $k-1$ and $k+1$ respectively, define
  \[\begin{array}{rcl}
\widehat{M}^+(Z_1) & := & \{ \hat{\varepsilon}(X) ~\colon ~ \mbox{$X$ subspace of $\cG$ of rank $k$}, ~ X \supset Z_1 \},\\
\widehat{M}^-(Z_2) & := & \{ \hat{\varepsilon}(X) ~\colon ~ \mbox{$X$ subspace of $\cG$ of rank $k$}, ~ X \subset Z_2 \}.
\end{array} \]
Then $\widehat{\cM}^+:=\{\widehat{M}^+(Z_1)\colon \rank(Z_1)=k-1 \}$ and $\widehat{\cM}^-:=\{\widehat{M}^-(Z_2)\colon \rank(Z_2)=k+1 \}$.

It is well known that $|\varepsilon|$ spans a subspace $\Sigma := \langle|\varepsilon|\rangle$ of $\PG(\bigwedge^kV)$ of codimension ${{2n}\choose {k-2}}$. Moreover $|\varepsilon|=|\hat{\varepsilon}|\cap\Sigma$ (see e.g. \cite[Section 4]{P16}). Hence every maximal projective subspace of $|\varepsilon|$ is the intersection of $\Sigma$ with a suitable maximal projective subspace of $|\hat{\varepsilon}|$.

Explicitly, consider $M^+(Z_1) := \widehat{M}^+(Z_1)\cap\Sigma$. If $Z_1\not\subseteq Z_1^\perp$ then $M^+(Z_1) = \emptyset$. Otherwise, the points of $M^+(Z_1)$ are the subspaces of $\cG$ of rank $k$ which contain $Z_1$ and are totally isotropic for $f$. These subspaces bijectively correspond to the points of the polar spaces associated with the alternating form induced by $f$ on $Z_1^\perp/Z_1$. The points of the latter polar space are just the points of $\PG(Z_1^\perp/Z_1)$ and $\dim(\PG(Z_1^\perp/Z_1)) = \dim(Z_1^\perp)-\dim(Z_1)-1 = (2n-k)-(k-2)-1 = 2n-2k+1$. Therefore $\dim(M^+(Z_1)) = 2n-2k+1$.

Consider now $M^-(Z_2) := \widehat{M}^-(Z_2)\cap\Sigma$. If $Z_2\subset Z_2^\perp$ then $\widehat{M}^-(Z_2)\subseteq\Sigma$. In this case $M^-(Z_2) = \widehat{M}^-(Z_2)$ and $\dim(M^-(Z_2)) = k$.

On the other hand, suppose $Z_2^\perp\cap Z_2 \subset Z_2$. The space $Z_2^\perp\cap Z_2$ has even codimension in $Z_2$, say $\mathrm{cod}_{Z_2}(Z_2^\perp\cap Z_2) = 2r$ for a positive integer $r$. If $r = 1$ then $M^-(Z_2)$ is a line of $\Sigma$, but not the $\varepsilon$-image of a line of $\cS_k$. In this case $M^-(Z_2) \subseteq M^+(Z_1)$ with $Z_1 = Z_2^\perp\cap Z_2$. The space $M^+(Z_1)$ has dimension $2n-2k+1 > 1$ (as $k < n$ by assumption). Hence $M^-(Z_2)$ is not maximal among the projective subspaces of $|\varepsilon|$. Finally, let $r > 1$. The maximal subspaces of $Z_2$ totally isotropic for $f$ have rank $\rank(Z_2)-r = k+1-r < k$. In this case $M^-(Z_2) = \emptyset$.

To sum up, $|\varepsilon|$ admits only the following two families of maximal projective subspaces:
\[\begin{array}{rcl}
{\cM}^+ & := & \{M^+(Z_1) = \widehat{M}^+(Z_1)\cap\Sigma~\colon~ \rank(Z_1)=k-1, ~Z_1\subseteq Z_1^\perp\},\\
{\cM}^- & := & \{M^-(Z_ 2) = \widehat{M}^-(Z_2) \subseteq \Sigma ~\colon ~\rank(Z_2)=k+1, ~ Z_2\subseteq Z_2^\perp\}.
\end{array} \]
The members of $\cM^+$ and $\cM^-$ have projective dimensions equal to $2n-2k+1$ and $k$ respectively. Moreover, if $\ell = \sh_k(\{Z_1, Z_2\})$ is a line of $\cS_k$,  for a $\{k-1,k+1\}$-flag $\{Z_1, Z_2\}$ of $\cS$, then $\varepsilon(\ell) = M^+(Z_1)\cap M^-(Z_2)$. Conversely, if $M^+(Z_1)\in \cM^+$ and $M^-(Z_2)\in \cM^-$, then $M^+(Z_1)\cap M^-(Z_2)$ is either empty or a line of $\Sigma$ contained in $|\varepsilon|$. In the latter case, $M^+(Z_1)\cap M^-(Z_2) = \varepsilon(\ell)$ where $\ell = \sh_k(\{Z_1, Z_2\})$ (a line of $\cS_k$).

So, if some property exists which enables us to distinguish $\cM^+$ from $\cM^-$ in the family $\cM := \cM^+\cup\cM^-$ of maximal projective subspaces of $|\varepsilon|$, then we can recover the $\varepsilon$-images of the lines of $\cS_k$ as the nonempty intersections of members of $\cM^+$ with members of $\cM^-$. Having done this, the equality $\aut(|\varepsilon|) = \Aut(\varepsilon)$ follows.

We shall now explain how to recognize the subfamilies $\cM^+$ and $\cM^-$ in $\cM$. As remarked above, the members of $\cM^+$ and $\cM^-$ have projective dimensions equal to $2n-2k+1$ and $k$ respectively. If $2n-2k+1 \neq k$, then dimensions are sufficient to distinguish the members of $\cM^+$ from those of $\cM^-$. In this case we are done.

Suppose that $2n-2k+1 = k$. Two members of the same family $\cM^+$ or $\cM^-$ always have at most one point in common. So, we can define a connected bipartite graph $\mathfrak{M}$ with $\cM$ as the set of vertices and adjacency defined as follows: two subspaces $S, S'\in \cM$ are adjacent in
$\mathfrak{M}$ precisely when $S\cap S'$ is a projective line. Thus, $\cM^+$ and $\cM^-$ are the two classes of the bipartition of $\mathfrak{M}$ (uniquely determined because $\mathfrak{M}$ is connected). It remains to understand which is which of these two classes. In view of this, note that every line of a subspace $M^-(Z_2)\in \cM^-$ is contained in a member of $\cM^+$. On the other hand, if $M^+(Z_1) \in \cM^+$ then the lines of $M^+(Z_1)$ that also belong to members of $\cM^-$ are the $\varepsilon$-images of the lines $\sh_k(\{Z_1,Z_2\})$ of $\cS_k$, for some singular subspace $Z_2$ of $\cS$ of rank $k+1$ containing $Z_1$. These are precisely the projective lines of $Z_1^\perp/Z_1$ that are totally isotropic with respect to the form $f_{Z_1}$ induced by $f$ on $Z_1^\perp/Z_1$. However, the full set of lines of $M^+(Z_1)$ is the set of all projective lines of $Z_1^\perp/Z_1$. Not all of them are totally isotropic for $f_{Z_1}$. So, not all lines of a $M^+(Z_1)$ belong to members of $\cM^-$. This is enough to distinguish $\cM^+$ from $\cM^-$.
\end{proof}

\section*{Bibliography}



\end{document}